\newtheorem{theorem}{Theorem}[section]
\newtheorem{lemma}[theorem]{Lemma}
\newtheorem{proposition}[theorem]{Proposition}
\newtheorem{corollary}[theorem]{Corollary}
\theoremstyle{definition}
\newtheorem{definition}[theorem]{Definition}
\theoremstyle{remark}
\newtheorem{remark}[theorem]{Remark}
\numberwithin{equation}{section}
\newtheorem*{theorem*}{Theorem}
\newcommand{\field}{\boldsymbol{k}}
\newcommand{\N}{\mathbb N}
\newcommand{\Prim}{\mathop{\mathrm{Prim}}}
\newcommand{\PBW}{\mathrm{PBW}}
\newcommand{\sset}{\mathfrak s}
\newcommand{\T}{\mathbb T}
\newcommand{\Z}{\mathbb Z}
\newcommand{\1}{_{(1)}}
\newcommand{\2}{_{(2)}}
\newcommand{\3}{_{(3)}}
\newcommand{\4}{_{(4)}}
\DeclareMathOperator{\BCH}{\mathrm{BCH}}
\DeclareMathOperator{\Der}{\mathrm{Der}}
\DeclareMathOperator{\Endo}{\mathrm{End}}
\DeclareMathOperator{\Hom}{\mathrm{Hom}}
\DeclareMathOperator{\Id}{\mathrm{Id}}
\DeclareMathOperator{\spann}{\mathrm{span}}
\author{A.  Grishkov}
\address{Departamento de Matem\'atica e Estat{\'\i}stica, Universidade de S\~ao Paulo, S\~ao Paulo, SP 05311-970, Brazil}
\email{shuragri@gmail.com}
\author{J. M. P\'erez-Izquierdo}
\address{Departamento de Matem\'aticas y Computaci\'on, Universidad de
La Rioja, 26004 \\ Lo\-gro\-\~no, Spain}
\email{jm.perez@unirioja.es}
\thanks{J.M. P\'erez-Izquierdo  thanks support from the Spanish Ministerio de Ciencia e Innovaci\'on (MTM2013-45588-C3-3-P) and the Programa hispano--brasile\~no de cooperaci\'on interuniversitario (PHBP14/00110). He also thanks I. Shestakov and J. Mostovoy for encouragement.}%
\keywords{Loops, Sabinin algebras, Commutative $A$--loops, Lie triple systems}
\subjclass[2010]{20N05,17A40}
\begin{document}
\title{Lie's Correspondence for Commutative Automorphic Formal Loops}
\begin{abstract}
We develop Lie's correspondence and an explicit Baker-Campbell-Hausdorff formula for commutative automorphic formal loops. 
\end{abstract}
\maketitle
%
%
%
\section{Introduction}

Loops are the non-associative counterpart of groups. These algebraic structures have a product $xy$ and a unit element $e$. Apart from this, the only extra requirement is that the left and right multiplication operators $L_x \colon y \mapsto xy$ and $R_x \colon y \mapsto yx$ are invertible for all $x$, which is equivalent to the existence of left and right divisions $x \backslash y$ and $x/y$ such that $x \backslash (xy) = y = x ( x \backslash y)$ and $(yx)/x = y = (y/x)x$. The lack of associativity uncovers a tremendous rich `phylogenetic tree' of varieties of loops  that has motivated recent developments in non-associative mathematics. The present paper gives more evidence about the close relationship between commutative automorphic loops and abelian groups as `non-associative species'.

The interest in loops began in the 1930s with the work of Moufang on projective geometry. Loops that  satisfy $x(y(xz)) = ((xy)x)z$ are now called Moufang loops in her honor. In 1955 Malcev \cite{Ma55} noticed that Lie's approach to the study of local analytic groups might work even when associativity is relaxed. Under this new point of view Lie algebras are just the tangent algebras of associative analytic loops, but many other varieties of tangent algebras exist. Moufang analytic loops are diassociative--i.e., the subloop generated by any two elements is a group--and their tangent algebras, now called Malcev algebras, are binary-Lie algebras--i.e. the subalgebra generated by any two elements is a Lie algebra. Another interesting observation from Malcev was that the Baker-Campbell-Hausdorff formula only depends on two elements, thus the same formula makes sense for binary-Lie algebras. This suggested that finite-dimensional real Malcev algebras integrate to local analytic Moufang loops, as  proved in 1970 by  Kuzmin \cites{Ku70,Ku71}. Since then, the study of Lie's correspondence in  non-associative settings was a challenging problem (see for instance \cites{Ya57,Ak76,Ki75,MiSa82,MiSa85,HoSt86,Sm88,Sa90,Sa90a,Sa91,Fi00,NaSt02,Go06}), finally solved by Mikheev and Sabinin  in 1987 with the apparatus of affine connections from differential geometry \cite{MiSa87}. The tangent algebra of a local analytic loop is a Sabinin algebra--an algebraic structure with two infinite families of multilinear operations satisfying certain axioms--and,  under certain convergence conditions, any finite-dimensional real Sabinin algebra is the tangent algebra of a (uniquely determined up to isomorphism)  local analytic loop. 

While the result of Mikheev and Sabinin shows that Lie's correspondence remains valid even when associativity is removed, in practice it is difficult to compute the identities that define the varieties of Sabinin algebras associated to  varieties of loops. For instance, the theory ensures the existence of two infinite families of multilinear operations on the tangent space of any local analytic Moufang loop that classify it; however, in practice, only a binary operation is required since the other multilinear operations can be derived from this one, and the axioms satisfied by this binary operation do not clearly follow from those of Sabinin algebra. Thus a case-by-case approach is required in the study of varieties of loops.

Over the years several varieties of loops and quasigroups have been studied in connection with geometry (see the books \cites{Go88,Ch90,AkSh92,Sa99,AkGo00} and references therein) and new examples of Lie's correspondence have appeared. Recently, the variety of automorphic loops introduced in 1956 by Bruck and Paige \cite{BP56} has attracted a lot of attention and it is an active area of research (see \cite{Vo15} for an updated account on the subject). These loops are defined by the following property: \emph{the stabilizer of the unit element $e$ in the group generated by the left and right multiplication operators consist of automorphisms of the loop}. For groups the elements of this stabilizer are nothing else but the usual adjoint maps $L_a R^{-1}_a$ that, as any undergraduate student knows, are automorphisms. However, the study of automorphic loops is technically quite demanding and  advances in this subject initially required computer assistance. In \cite{JKV11} Jedlicka, Kinyon and Vojt{\v{e}}chovsk{\'y} proved that any commutative automorphic loop of odd order is solvable. Later \cite{JKNV11} Johnson, Kinyon, Nagy and Vojt{\v{e}}chovsk{\'y} initiated a search of simple commutative automorphic loops of small order with the help of GAP. In \cite{Na14} Nagy studied commutative automorphic loops of exponent $2$ by means of Lie rings. Finally, Grishkov, Kinyon and Nagy \cite{GKN14} proved that any commutative automorphic finite loop is solvable. In \cite{KKPV12} the same result has been obtained for automorphic finite loops of odd order.

In this paper we would like to advance towards the understanding of local analytic commutative automorphic loops. Since the paper is targeted to algebraists, to avoid the use of differential geometry we will work with commutative automorphic formal loops and our techniques will rely on non-associative Hopf algebras.

Tangent algebras of commutative automorphic formal loops will be called commutative automorphic Lie triple systems. These are vector spaces $T$ equipped with a linear triple product $[\,,\,,\,] \colon T \otimes T \otimes T \rightarrow T$ such that
\begin{enumerate}
\item $[a,b,c] = - [b,a,c]$,
\item $[a,b,c]+[b,c,a]+[c,a,b] = 0$ and
\item  $[[a,b,c],a',b'] =[[a,a',b'],b,c] + [a,[b,a',b'],c] + [a,b,[c,a',b']]$. 
\end{enumerate}
for any $a,b,c, a', b' \in T$. Our main result is the following Lie's correspondence: \emph{over fields of characteristic zero, the category of commutative automorphic formal loops is equivalent to the category of commutative automorphic Lie triple systems.} We will also derive an explicit  Baker-Campbell-Hausdorff formula for commutative automorphic loops:
\begin{displaymath}
\BCH(a,b)^{\cdot} = a + b + \sum_{i,j\geq 1} \beta_{i,j} [a,b,\stackrel{i-1}{\dots}a,\stackrel{j-1}{\dots}b]
\end{displaymath}
where $\beta_{i,j}$ ($i,j \geq 1$) is the coefficient of $s^it^j$ in the Taylor expansion of
\begin{displaymath}
 \frac{\left(e^{2 s}-e^{2 t}\right) (s+t)}{2 \left(e^{2 (s+t)}-1\right)}
\end{displaymath}
at $(0,0)$,
\begin{equation*}
[a_1,a_2,\dots,a_n] :=
\left\{
\begin{array}{ll}0 & \text{if } n \text{ is even} \cr
a_1 & \text{if } n= 1\cr
[[[a_1,a_2,a_3],\cdots],a_{n-1},a_{n}]&  \text{if } n > 1 \text{ is odd}
\end{array}\right.
\end{equation*}
and
\begin{displaymath}
\stackrel{i}{\dots}c := c,c,\dots,c \quad \text{where } c \text{ appears } i \text{ times.}
\end{displaymath}
We hope that this formula will be useful for researches working on finite loops. In \cite{Na02} a Baker-Campbell-Hausdorff formula was studied for Bruck loops. 

The paper is structured as follows. In Section~\ref{sec:cafl} we recall the definition of formal loop and the relationship between commutative automorphic loops and left Bruck loops, from which the algebraic structure on the tangent space of any  commutative automorphic formal loop is easily derived. We study this algebraic structure in Section~\ref{sec:calts}. Special attention is paid to the commutative automorphic Lie triple system freely generated by two generators since we will be concerned with a Baker-Campbell-Hausdorff formula. This triple also plays an important role in our proof on the formal integration of commutative automorphic Lie triple systems. This proof occupies Section~\ref{sec:fi}. An explicit Baker-Campbell-Formula is presented in Section~\ref{sec:bch}.

\subsection{Notation.} In this paper the characteristic of the base field is assumed to be zero. If not explicitly established otherwise, $[x,y]$ and $(x,y,z)$ will stand for the commutator $xy - yx$ and the associator $(xy)z - x(yz)$ respectively. We will stick to the following order of parentheses for powers: 	$x^n := x(x(\cdots (xx)))$. Finally, coalgebras are always assumed to be cocommutative and coassociative even when not explicitly mentioned.

\section{Commutative automorphic formal loops}
\label{sec:cafl}
\subsection{Formal loops and non-associative Hopf algebras. Linearization.}
\subsubsection{Hopf algebras of symmetric powers.} Let $T$ be a vector space over a field $\field$ of characteristic zero and let $\field[T]$ be the symmetric algebra on $T$ with product $xy$. The maps
\begin{displaymath}
\Delta(a) = a \otimes 1 + 1 \otimes a \quad \text{and} \quad \epsilon(a) = 0 \quad (a \in T)
\end{displaymath}
can be uniquely extended to homomorphisms of unital algebras\footnote{To achieve conciseness we will omit the symbol $\sum$  in Sweedler's notation $ \sum x\1 \otimes x\2$ for $\Delta(x)$.}
\begin{eqnarray*}
\Delta \colon \field[T]  &\rightarrow& \field[T]  \otimes \field[T]\\
x &\mapsto& x\1 \otimes x\2 
\end{eqnarray*}
and 
\begin{eqnarray*}
\epsilon \colon \field[T] &\rightarrow& \field \\
x &\mapsto& \epsilon(x)
\end{eqnarray*}
so that $(\field[T], \Delta, \epsilon)$ is a \emph{coassociative and cocommutative coalgebra}. Cocommutativity refers to the property $x\1 \otimes x\2 = x\2 \otimes x\1$ while coassociativity means $(\Delta \otimes \Id) \Delta =  ( \Id \otimes \Delta)\Delta$, i.e. $({x\1}\1 \otimes {x\1}\2) \otimes x\2  = x\1 \otimes ({x\2}\1 \otimes {x\2}\2)$. Hence, there is no ambiguity in writing $x\1 \otimes x\2 \otimes x\3$ for $(\Delta \otimes \Id) \Delta(x)$, or more generally  $x\1 \otimes x\2 \otimes \cdots \otimes x_{(n+1)}$ for the image of $x$ after applying  $\Delta$ $n$ times. The \emph{unit}  is the map $u \colon \field \rightarrow \field[T]$ given by $\alpha \mapsto \alpha 1$. The algebraic structure $(\field[T], xy, u, \Delta,\epsilon)$ is a \emph{commutative and associative connected  bialgebra}. If we also include the $1$-ary operation given by the \emph{antipode}, i.e. the automorphism $S$ of $\field[T]$ induced by $a \mapsto -a$ for any $a \in T$, then we get a  commutative connected \emph{Hopf algebra} (the theory of coalgebras, bialgebras and Hopf algebras can be found in \cites{Ab80,Sw69} for instance).  However, the product in this Hopf algebra structure on $\field[T]$  is irrelevant for us since it  corresponds to  abelian formal groups instead of  commutative automorphic formal loops. We will  keep the coalgebra structure and the unit on $\field[T]$ but we will consider some new non-associative products on $\field[T]$.

\subsubsection{Non-associative Hopf algebras.} In this paper a \emph{(non-associative) Hopf algebra} $(H, m, u, \backslash, /, \Delta, \epsilon)$ refers to a cocommutative and coassociative coalgebra $(H, \Delta, \epsilon)$ endowed with the following linear maps: a \emph{product} $m \colon H \otimes H \rightarrow H$, a \emph{unit} $u \colon \field \rightarrow H$, a \emph{left division} $\backslash \colon H \otimes H \rightarrow H$ and a \emph{right division} $/\colon H \otimes H \rightarrow H$ so that
$\Delta(xy) = \Delta(x) \Delta(y)$, $\Delta(1) = 1 \otimes 1$, $\epsilon(xy) = \epsilon(x) \epsilon(y)$,  $\epsilon(1) = 1$ and
\begin{align*}
x\1 \backslash (x\2 y) &= \epsilon(x) y = x\1 (x\2 \backslash y)\\
 (y x\1) / x\2 &= \epsilon(x) y = (y / x\1) x\2
\end{align*}
where $xy := m(x \otimes y)$ and $1 := u(1)$ is the unit element (see \cite{MPS14} for a survey on non-associative Hopf algebras). In case that $H$ is associative then the left and right divisions are $x \backslash y = S(x) y$ and $x/y = xS(y)$ where $S$ is the antipode. However, non-associative Hopf algebras lack of antipode in general. 
\subsubsection{Connected Hopf algebras.}  Hopf algebras with coalgebra structure isomorphic to $(\field[T], \Delta, \epsilon)$ for some vector space $T$ are called \emph{connected} (see \cite{Sw69} for the precise definition). For these Hopf algebras the left and right division can be easily derived from the product.  For instance $1 \backslash (1y) = y$ implies that $ 1 \backslash y = y$. For elements $a \in T$ we have
\begin{displaymath}
a \backslash (1y)  + 1 \backslash(ay) = \epsilon(a) y = 0 \text{ thus } a \backslash y = - ay,
\end{displaymath}
etc. Connected Hopf algebras are much more friendly than general Hopf algebras since many maps can be constructed recursively in this way. We will use this feature several times.
\subsubsection{Primitive elements.} 
Elements $a$ in a Hopf algebra $H$ such that 
\begin{displaymath}
\Delta(a) = a \otimes 1 + 1 \otimes a
\end{displaymath}
are called \emph{primitive}. The subspace of all primitive elements of $H$ is denoted by $\Prim(H)$. Shestakov and Umirbaev \cite{ShUm02} realized that this space admits many algebraic operations that generalize the usual Lie product on the tangent space of local analytic groups. With these operations $\Prim(H)$ is a Sabinin algebra. Thus $\Prim(H)$ will play an important role in this paper since it can be understood as the tangent algebra of formal loops. 
\subsubsection{Products $xy$ and $x \cdot y$.}  In this paper Hopf algebras are non-associative, so we will omit this adjective; they will satisfy some identities, but associativity is related to formal groups rather than to more general formal loops. In this context  several products naturally appear on the same coalgebra to give different Hopf algebras. We will be concerned with those Hopf algebras related to commutative automorphic loops but they will be obtained  from other Hopf algebras, with the same underlying vector spaces, linked with left Bruck loops. To distinguish between both structures, we will use $x \cdot y, x \dot{\backslash} y, x \dot{/} y$ for the former (commutative automorphic Hopf algebras or loops) and $xy, x \backslash y, x/y$ for the latter (left Bruck Hopf algebras or loops). Beware, none of these structures is  the natural commutative and associative Hopf algebra structure on  $\field[T]$. Since the coalgebra structure is fixed we will refer to Hopf algebras without any mention to the coalgebra structure or to the unit.

\subsubsection{Formal loops.} A \emph{formal loop} is a map $F \colon \field[T] \otimes \field[T] \rightarrow T$, also denoted by $\mathbf{x}\mathbf{y}$, that satisfies
\begin{displaymath}
F\vert_{\field[T] \otimes 1} = \pi_T = F\vert_{1 \otimes \field[T]}
\end{displaymath} 
where $\pi_T$ stands for the natural projection of $\field[T]$ onto $T$. The distinguished role of $\pi_T$ in this context is linked to eulerian idempotents. Sometimes we refer to the pair $(\field[T], F)$ or $(\field[T], \mathbf{x}\mathbf{y})$ as the formal loop. The map $F$ is modeled after the Taylor expansion of local loops and it codifies all the information required to construct a connected Hopf algebra. Any such Hopf algebra defines a formal loop by $F(x \otimes y) := \pi_T(xy)$. 
\subsubsection{The Hopf algebra of formal distributions of a formal loop.}
Any formal loop $(\field[T],F)$ can be extended to a map $ m \colon \field[T] \otimes \field[T] \rightarrow \field[T]$
given rise to a new Hopf algebra structure (see \cite{MP10}):
\begin{displaymath}
 \field[F] := (\field[T], m , u,  \backslash, /, \Delta, \epsilon)
\end{displaymath}
on the coalgebra $(\field[T], \Delta, \epsilon)$--recall that the left and right divisions can be derived from the product. This Hopf algebra is called the \emph{Hopf algebra (or bialgebra) of formal distributions with support at the identity of the formal loop $F$}.

With independence of the formal loop, as a unital algebra $\field[F]$ is always generated by $T$. In fact, $\field[F]$ is filtered by the powers of $\ker \epsilon$, and the corresponding graded algebra is isomorphic to the symmetric algebra $\field[T]$ (Poincar\'e-Birkhoff-Witt Theorem \cites{MP10,PI07}). The correspondence 
\begin{displaymath}
 F \mapsto \field[F]
\end{displaymath}
between formal loops and connected Hopf algebras is an equivalence of categories \cite{MP10}. Therefore the study of formal loops is equivalent to the study of non-associative Hopf algebras. 

\subsubsection{Commutative automorphic and left Bruck formal loops.} Identities such as $(\textbf{x}\textbf{y})\textbf{z} = \textbf{x}(\textbf{y}\textbf{z})$ make sense for formal loops but the reader should consult \cite{MP10} for the rigorous interpretation of them since these expressions are just a way of avoiding the cumbersome occurrence of the comultiplication in the identities satisfied by $\field[F]$. Identities on formal loops are not required in this paper so our advice is to focus on identities on non-associative Hopf algebras.

\begin{definition}
\label{def:CA}
A formal loop $(\field[T],\mathbf{x} \cdot \mathbf{y})$ is \emph{commutative automorphic} if it satisfies the identities:
\begin{enumerate}
\item (\emph{commutative}) $\mathbf{x} \cdot \mathbf{y} = \mathbf{y} \cdot \mathbf{x}$  and
\item (\emph{left automorphic})
\begin{eqnarray*}
&& (\mathbf{x} \cdot \mathbf{y}) \dot{\backslash}(\mathbf{x} \cdot (\mathbf{y} \cdot (\mathbf{w} \cdot \mathbf{z}))) \\
&& \quad\quad = \left((\mathbf{x} \cdot \mathbf{y})\dot{\backslash}(\mathbf{x} \cdot (\mathbf{y} \cdot \mathbf{w}))\right) \cdot \left((\mathbf{x} \cdot \mathbf{y})\dot{\backslash}(\mathbf{x} \cdot (\mathbf{y} \cdot \mathbf{z}))\right).
\end{eqnarray*}
\end{enumerate}
Equivalently, the formal loop $(\field[T],\dot{F})$ is \emph{commutative automorphic} if $\field[\dot{F}]$ is a \emph{commutative automorphic Hopf algebra}, i.e. it satisfies the identities:
\begin{enumerate}
\item (\emph{commutative}) $x \cdot y = y \cdot x$ and
\item (\emph{left automorphic}) 
\begin{eqnarray*}
&&  (x\1 \cdot y\1 ) \dot{\backslash} (x\2 \cdot (y\2  \cdot (w \cdot z)))\\
 && \quad\quad  =
 \quad \left((x\1 \cdot y\1)\dot{\backslash} (x\2  \cdot (y\2 \cdot w))\right) \cdot \left((x\3 \cdot y\3)\dot{\backslash} (x\4 \cdot (y\4 \cdot z))\right).
\end{eqnarray*}
\end{enumerate}
The second identity will be written as
\begin{displaymath}
\dot{l}(x,y)(w \cdot z) = \dot{l}(x\1,y\1)(w) \cdot \dot{l}(x\2,y\2)(z)
\end{displaymath}
with 
\begin{displaymath}
\dot{l}(x,y)(w) := (x\1 \cdot y\1)\dot{\backslash} (x\2  \cdot (y\2 \cdot w)).
\end{displaymath}
\end{definition}

\begin{definition} A formal loop $(\field[T],\mathbf{x} \mathbf{y})$ is \emph{left Bruck} if it satisfies the identities:
\begin{enumerate}
\item (\emph{left Bol}) $\mathbf{x}(\mathbf{y}(\mathbf{x}\mathbf{z})) = (\mathbf{x}(\mathbf{y}\mathbf{x}))\mathbf{z}$ and
\item (\emph{automorphic inverse property}) $S(\mathbf{x}\mathbf{y})=S(\mathbf{x})S(\mathbf{y})$
\end{enumerate}
where $S(\mathbf{x}):=\mathbf{x}\backslash \mathbf{1}$.
Equivalently, the formal loop $(\field[T],F)$ is left Bruck if $\field[F]$ is a \emph{left Bruck Hopf algebra}, i.e. it satisfies the identities
\begin{displaymath}
x\1  (y  (x\2  z)) =(x\1 (y  x\2)) z \quad \text{and} \quad  S(x  y) = S(x)  S(y)
\end{displaymath}
for any $x,y$ and $z$, where $S(x):= x {\backslash} 1$.
\end{definition}
Commutative automorphic and left Bruck loops are defined by the same identities as their formal counterparts.

\subsubsection{Linearization.} We can \emph{linearize} identities for loops to obtain identities for coassociative and cocommutative Hopf algebras \cite{PI07} by replacing any repeated occurrence of any variable, say $x$, with $x\1$, $x\2$, etc. The occurrence of $x$ in only one side of the initial identity has to be corrected on the other side by multiplying that side by $\epsilon(x)$ to keep both sides of the identity being multilinear in all the variables. For instance, the identities that define commutative automorphic Hopf algebras or left Bruck Hopf algebras are obtained in this way from the identities that define the varieties of commutative automorphic loops or left Bruck loops respectively. If a new identity is consequence of the identities that define the given variety of loops then the linearization of that identity is  consequence of the linearization of the identities that define that variety \cite{PI07}. For instance, the variety of associative loops is the variety of groups. In any group the identities $x\backslash{1} = 1/x$,  $(x\backslash{1})(xy) = y = x ((x\backslash{1})y)$, $(yx)(1/x) = y = (y(1/x))x$ and $(xy) \backslash 1 = (y \backslash 1)(x \backslash 1)$ hold. In other words, if $S(x):=x^{-1}:=x \backslash{1} $ then $x\backslash y = S(x)y$ and $y/x = y S(x)$, thus instead of the binary operations $\backslash$ and $/$ we only consider the $1$-ary inverse map $S$. The same remains true for associative, coassociative and cocommutative Hopf algebras, where $x\backslash y$ and $x/y$ are superseded by the antipode $S(x) := x \backslash 1$. For instance, the identity $x\backslash{1} = 1/x$ for associative loops follows from
\begin{displaymath}
  x \backslash 1 =   ((1/x)x)(x \backslash 1) = (1/x)(x(x \backslash 1)) =  1/x.
\end{displaymath}
Linearizing these equalities we get 
\begin{displaymath}
x \backslash 1 =   ((1/x\1)x\2)(x\3 \backslash 1) = (1/x\1)(x\2(x\3 \backslash 1)) =  1/x
\end{displaymath}
for associative, coassociative and cocommutative Hopf algebras. Linearizing some other  identities on associative loops leads to the axioms for the antipode in the definition of (associative) Hopf algebras. Beware, in groups $S(S(x))=x$, hence the antipode of any associative, coassociative and cocommutative Hopf algebra must have order $2$, which is false in general if the hypotheses on coassociativity and cocommutativity fail. 

All Hopf algebras considered in this paper are coassociative and cocommutative so we will freely invoke this process of linearizing identities to obtain results for Hopf algebras from results on loops. Proposition \ref{prop:CA-LB} is the most important example where linearization is exploited.

\subsection{Commutative automorphic and left Bruck formal loops. Lie triple systems.}
\label{subsec:CAformalloos}

\subsubsection{The connection between commutative automorphic loops and left Bruck loops.} The study of commutative automorphic loops has experienced huge advances in recent years \cites{JKV10,JKV11,GKN14} due to the connections between these loops and left Bruck loops. 
This technique is useful in the context of formal loops too, so we will review very briefly some of the properties of commutative automorphic loops, left Bruck loops and the connection between these two varieties of loops. The following maps will appear frequently
\begin{displaymath}
S(x):=x^{-1}:= x \dot{\backslash} 1, \quad \dot{L}_x \colon y \mapsto x \cdot y, \quad\text{and}\quad  \dot{l}(x,y)\colon z \mapsto (x \cdot y) \dot {\backslash} (x \cdot (y \cdot z)).
\end{displaymath}
They are defined in terms of the $\cdot$ notation for the operations of commutative automorphic loops $(Q, 1, x \cdot y , x \dot{\backslash} y, x \dot{/} y)$ but they obviously have counterparts $S, L_x$ and $l(x,y)$ for left Bruck (or arbitrary) loops $(Q,1,xy, x \backslash y, x/y)$. 

\begin{proposition}[\cite{BP56}]
Every commutative automorphic loop and every left Bruck loop is power-associative, i.e. the subloop generated by any element is a group.
\end{proposition}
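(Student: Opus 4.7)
The plan is to establish power-associativity separately in each variety by introducing the inductive powers $x^0 := 1$, $x^{n+1} := x \cdot x^n$ (or with the non-dotted product, in the left Bruck case), and showing that the subloop generated by $x$ equals $\{x^n : n \in \Z\}$ with $x^m x^n = x^{m+n}$. Once this identity holds for non-negative exponents, the left and right divisions supply the inverses $x^{-n}$ and $\langle x \rangle$ becomes a cyclic group.

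For left Bruck loops, the key tool is the left Bol identity in operator form, $L_x L_y L_x = L_{x(yx)}$. Setting $y = 1$ gives the base case $L_x^2 = L_{x^2}$. Proceeding by induction on $n$ with $y$ equal to a lower power of $x$ and invoking earlier cases of the induction to rewrite $x(yx)$ as $x^{n+1}$, one obtains $L_x^n = L_{x^n}$ for every $n \geq 0$, hence $x^m x^n = x^{m+n}$. The automorphic inverse property $S(xy) = S(x) S(y)$ then extends this to negative exponents via $S(x^n) = S(x)^n$, forcing $\langle x \rangle$ to be an abelian group.

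For commutative automorphic loops, commutativity removes the asymmetry between $x \cdot x^n$ and $x^n \cdot x$, and the left automorphic law of Definition \ref{def:CA} provides inner automorphisms $\dot{l}(x,y)$ that can be used to reconcile all parenthesizations of a monomial in $x$. An induction parallel to the Bruck case then yields $x^m \cdot x^n = x^{m+n}$, with the automorphic property replacing the role played by Bol and the AIP in the left Bruck argument.

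The main obstacle, especially in the left Bruck case, is the inductive identification $x^{n-1} x = x \cdot x^{n-1}$: without commutativity one must combine Bol with the automorphic inverse property to equate the two iterated products, and this coordination is precisely the technical core of the Bruck--Paige proof cited in the statement.
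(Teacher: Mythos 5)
The paper does not prove this proposition; it is quoted from Bruck and Paige \cite{BP56}, so there is no internal argument to compare yours against. Judged on its own terms, your proposal splits into two halves of very unequal solidity. The left Bruck half is essentially correct and standard: the operator form $L_xL_yL_x = L_{x(yx)}$ of the left Bol identity, specialized to $y = x^{n-1}$ and combined with the inductive hypothesis $L_{x^{n-1}} = L_x^{n-1}$, yields $L_{x^n} = L_x^n$, hence $x^mx^n = x^{m+n}$; the left inverse property (which already follows from left Bol) or the AIP then extends this to negative exponents. This is exactly the monoalternativity recorded in Proposition~\ref{prop:monoalternative}. Your "main obstacle" paragraph overstates the difficulty here: $x^{n-1}x = x\cdot x^{n-1}$ is immediate once $L_{x^{n-1}} = L_x^{n-1}$ is known, since both sides equal $x^n$.

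The commutative automorphic half, however, contains a genuine gap. You assert that "an induction parallel to the Bruck case" works, "with the automorphic property replacing the role played by Bol," but you never identify what replaces the operator identity $L_xL_yL_x = L_{x(yx)}$ that drives the Bruck induction. A commutative automorphic loop satisfies no Bol-type identity; all you have is that the inner mappings $\dot{l}(x,y) = \dot{L}_{x\cdot y}^{-1}\dot{L}_x\dot{L}_y$ are automorphisms. Converting that into $x^m\cdot x^n = x^{m+n}$ is precisely the content of Bruck and Paige's theorem on power-associativity of $A$-loops: one must show, by a careful induction, that the relevant inner mappings fix $x$ and hence fix every power of $x$, and establishing that they fix $x$ in the first place requires nontrivial manipulation of the automorphic identity, not merely "reconciling parenthesizations." Note also that you cannot shortcut this by passing to the associated left Bruck loop, since that construction (Proposition~\ref{prop:JKV11}) requires unique $2$-divisibility, which an arbitrary commutative automorphic loop need not enjoy. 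As written, the second half of your argument is a statement of intent rather than a proof.
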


A wrong linearization might suggest that the subalgebra generated by any element in a commutative autormophic (or  left Bruck) Hopf algebra is associative (see (\ref{eq:Hopf-pa})). This is false and we are forced to make a choice regarding the powers of elements. In this paper we will stick to the following order of parentheses for powers:
\begin{displaymath}
x^n := x(x(\cdots (xx))).
\end{displaymath}

\begin{proposition}[\cite{JKV11}]
Commutative automorphic loops $(Q, x \cdot y)$ and left Bruck loops $(Q,xy)$ have the automorphic inverse property:
\begin{displaymath}
S(x \cdot y) = S(x) \cdot S(y) \quad \text{and} \quad S(xy) = S(x)S(y)
\end{displaymath}
for all $x, y \in Q$. In addition $S^2 = \Id$.
\end{proposition}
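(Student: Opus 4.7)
The proof splits naturally into the left Bruck part and the commutative automorphic part, and the latter is the more substantial one.

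For a left Bruck loop, the identity $S(xy) = S(x)S(y)$ is the automorphic-inverse axiom itself. To establish $S^2 = \Id$ I would argue directly from the left Bol identity: substituting $y = z = x^{-1} := x\backslash 1$ in $x(y(xz)) = (x(yx))z$ and using $x\cdot x^{-1} = 1$ collapses the left side to $1$ and the right side to $(x\cdot(x^{-1}\cdot x))\cdot x^{-1}$. Right-cancellation by $x^{-1}$ (and then left-cancellation by $x$) forces $x^{-1}\cdot x = 1$, so $x^{-1}$ is a two-sided inverse and uniqueness of inverses in a loop gives $(x^{-1})^{-1} = x$.

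For a commutative automorphic loop $(Q,\cdot)$ the equality $S^{2} = \Id$ is immediate, since commutativity makes each one-sided inverse two-sided and thus unique. The heart of the proof is the AIP $S(x\cdot y) = S(x)\cdot S(y)$. My plan is to exploit the fact that, by the left automorphic axiom, the inner map $\dot{l}(x,y)(z) = (x\cdot y)\dot{\backslash}(x\cdot(y\cdot z))$ is a loop automorphism, and therefore commutes with the (two-sided) inversion $S$: one has $\dot{l}(x,y)(S(z)) = S(\dot{l}(x,y)(z))$ for every $z$. Specializing to $y = x^{-1}$ produces the concrete automorphism $P_x := \dot{l}(x,x^{-1}) = \dot{L}_x\dot{L}_{x^{-1}}$, which by the power-associativity of commutative automorphic loops (asserted in the proposition immediately preceding the one at hand) fixes every power of $x$. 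The idea is then to pit the $S$-equivariance of such inner automorphisms against commutativity --- for instance by comparing $\dot{l}(x,y)$ acting on an expression built from $x^{-1}, y^{-1}$ with $\dot{l}(x^{-1},y^{-1})$ acting symmetrically on one built from $x,y$ --- to conclude that inversion itself is a homomorphism.

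The main obstacle is combinatorial: $\dot{l}(x,y)$ does not visibly relate $(x\cdot y)^{-1}$ to $x^{-1}\cdot y^{-1}$, so extracting AIP directly requires a careful sequence of inner-mapping manipulations, which is essentially the content of the original Bruck--Paige argument. Should that direct path prove too delicate, I would fall back on the bridge between commutative automorphic and left Bruck loops previewed in Subsection~\ref{subsec:CAformalloos}: one constructs an auxiliary left Bruck operation on the same underlying set sharing the unit and inversion $S$, and then transports AIP from its axiomatic status on the left Bruck side back to $(Q,\cdot)$.
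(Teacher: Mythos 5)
The proposition is quoted from \cite{JKV11} and the paper supplies no proof of its own, so the only question is whether your argument stands on its own. Parts of it do: for left Bruck loops the identity $S(xy)=S(x)S(y)$ is the automorphic-inverse axiom, your Bol-identity computation with $y=z=x\backslash 1$ correctly yields $x^{-1}x=1$ and hence $S^2=\Id$, and $S^2=\Id$ for any commutative loop is indeed immediate.

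The genuine gap is the heart of the statement: $S(x\cdot y)=S(x)\cdot S(y)$ for commutative automorphic loops is never actually proved. You correctly identify the right starting point --- the inner mappings $\dot{l}(x,y)$ are automorphisms and every loop automorphism commutes with $S$ --- but the step that turns this into the AIP is exactly the nontrivial content of the Bruck--Paige/Jedli\v{c}ka--Kinyon--Vojt\v{e}chovsk\'y argument, and you explicitly defer it (``requires a careful sequence of inner-mapping manipulations, which is essentially the content of the original Bruck--Paige argument''). Announcing the strategy and conceding that the decisive computation is the cited authors' is not a proof. Moreover, your fallback is circular: the bridge to an auxiliary left Bruck operation on $(Q,\cdot)$ (Proposition~\ref{prop:JKV11}) is constructed in \cite{JKV11} \emph{after}, and using, the AIP and power-associativity of the commutative automorphic loop, so it cannot be invoked to establish the AIP. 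To close the gap you would need to exhibit a concrete identity --- for instance, evaluating the automorphism $\dot{l}(x,y)$ at a well-chosen element such as $S(y)$ and comparing with its value at $y$ via $\dot{l}(x,y)\circ S = S\circ \dot{l}(x,y)$ --- and carry the cancellation through to $S(x\cdot y)=S(x)\cdot S(y)$; as written, no such computation appears.
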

\begin{proposition}[\cite{Kr98}]
Every left Bruck loop $(Q,xy)$ satisfies the left automorphic property
\begin{displaymath}
l(x,y)(wz) = l(x,y)(w)l(x,y)(z)
\end{displaymath}
for all $x,y,w,z \in Q$.
\end{proposition}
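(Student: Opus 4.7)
The plan is to work directly with the two defining properties of left Bruck loops: the left Bol identity and the automorphic inverse property (AIP), and to deduce the automorphism property of $l(x,y)$ via two stages, the first using Bol alone to get a pseudo-automorphism statement, and the second using AIP to kill the companion.

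First I would verify that the operator form of left Bol, $L_xL_yL_x = L_{x(yx)}$, already forces the left inverse property $x^{-1}(xy)=y$ (with $x^{-1}=x\backslash 1$): setting $y=x^{-1}$ in the operator identity gives $L_x L_{x^{-1}} L_x = L_x$, so $L_{x^{-1}}=L_x^{-1}$. As a consequence one can rewrite
\begin{displaymath}
l(x,y)(z)=(xy)\backslash(x(yz))=(xy)^{-1}\cdot(x(yz)),
\end{displaymath}
and the Bol relation rearranges to the clean identity $x(yz)=(x(yx))(x^{-1}z)$ for all $x,y,z$.

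Second, I would show that for every left Bol loop, $l(x,y)$ is a right pseudo-automorphism, i.e., there is a companion element $c=c(x,y)$ such that
\begin{displaymath}
l(x,y)(w\cdot z)\cdot c \;=\; [l(x,y)(w)\cdot c]\cdot l(x,y)(z),
\end{displaymath}
or an equivalent companion identity. This is a classical consequence of repeated substitutions in the Bol identity; concretely, one expands $x(y(wz))$ via the rearranged Bol identity from the previous step, then expands $(xy)\cdot(l(x,y)(w)\cdot l(x,y)(z))$ the same way, and matches them up to a product involving the element $(yx)\backslash(xy)$ (or a similar obstruction), which is exactly the candidate companion.

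Finally, AIP enters: it asserts $S(ab)=S(a)S(b)$, equivalently $(ab)^{-1}=a^{-1}b^{-1}$, together with $S^2=\Id$. A direct verification shows that the companion $c(x,y)$ from the previous step can be expressed entirely via the maps $S$, the product, and the two elements $x,y$; AIP collapses the expression to $1$, upgrading the pseudo-automorphism statement to the required identity $l(x,y)(wz)=l(x,y)(w)\cdot l(x,y)(z)$.

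The main obstacle is the middle step: establishing the pseudo-automorphism property in a bare left Bol loop requires a careful, multi-step sequence of Bol substitutions, with the companion naturally appearing as a measure of the failure of the map $S$ to be an antihomomorphism. Organising the substitutions so that the same ``canonical form'' (products built out of $x(yx)$ and $x^{-1}$) appears on both sides is the delicate bookkeeping, and this is precisely the point at which AIP is needed in the Bruck case — it is the minimal additional hypothesis on top of left Bol that guarantees the companion is trivial.
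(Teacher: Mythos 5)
The paper never proves this proposition: it is imported verbatim from Kreuzer \cite{Kr98}, so there is no internal argument to measure yours against. Judged on its own terms, your proposal is a strategy outline rather than a proof, and the decisive step is missing. All of the content sits in your second stage --- that in a bare left Bol loop $l(x,y)$ is a right pseudo-automorphism with an explicitly computable companion --- and you do not carry out the ``careful, multi-step sequence of Bol substitutions'' that you yourself flag as the main obstacle. This is not a quotable classical fact for left Bol loops: Bruck's pseudo-automorphism theorem for inner mappings is a theorem about \emph{Moufang} loops, and for left Bol loops the claim would have to be proved from scratch. Until the companion is actually written down and verified, your third stage cannot even be tested.

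Moreover, the one concrete companion you name points the wrong way. The element $(yx)\backslash(xy)$ is trivial precisely when $x$ and $y$ commute, and left Bruck loops are in general noncommutative, so the automorphic inverse property cannot ``collapse'' it to $1$. Likewise, AIP makes $S$ a \emph{homomorphism}, $S(xy)=S(x)S(y)$; a companion measuring the failure of $S$ to be an \emph{anti}homomorphism would again be annihilated by commutativity, not by AIP. So the obstruction your sketch identifies is not one that your hypothesis removes. There is also a small circularity at the start: substituting $y=x^{-1}$ into $L_xL_yL_x=L_{x(yx)}$ gives $L_xL_{x^{-1}}L_x=L_{x(x^{-1}x)}$, and concluding that the right-hand side is $L_x$ already uses $x^{-1}x=1$, i.e.\ two-sidedness of inverses, which in a left Bol loop needs its own (standard but nontrivial) argument; the clean route is to choose $y$ with $x(yx)=1$, deduce $L_x^{-1}=L_yL_x=L_xL_y$, and only then identify $y$ with $x\backslash 1$. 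To close the gap you would either have to supply the actual identity-chasing of \cite{Kr98}, in which AIP is used from the outset rather than only at the end to kill a Bol-only companion, or simply cite that reference as the paper does.
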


\begin{proposition}[\cite{BP56}]
\label{prop:monoalternative}
Every left Bruck loop $(Q,xy)$ is left monoalternative:
\begin{displaymath}
x^m(x^ny) = x^{m+n}y
\end{displaymath}
for any $x,y\in Q$ and $n,m \in \Z$. In particular, $S(x)(xy) = y = x(S(x)y)$.
\end{proposition}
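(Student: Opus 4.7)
The plan is to reduce the claim to the left alternative identity $L_x^n = L_{x^n}$ for $n\ge 0$, and then transport it to negative exponents via $S$. First I would establish the left inverse property: since $x\cdot S(x)=1$ by the definition $S(x)=x\backslash 1$, the automorphic inverse property together with $S^2=\Id$ (both from the preceding proposition) give $1=S(1)=S(x\cdot S(x))=S(x)\cdot x$. Substituting $y=S(x)$ into left Bol yields
\[
x(S(x)(xz)) \;=\; (x(S(x)\cdot x))z \;=\; (x\cdot 1)z \;=\; xz,
\]
and cancelling $L_x$ produces $S(x)(xz)=z$, i.e.\ $L_{S(x)}=L_x^{-1}$. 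Replacing $x$ by $S(x)$ also yields $x(S(x)z)=z$.

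Next I would prove $L_x^n=L_{x^n}$ for every $n\ge 0$ by induction. The case $n\le 1$ is trivial. Assuming $L_x^k=L_{x^k}$ for all $k\le n$, I apply left Bol with $y=x^{n-1}$ and invoke power-associativity (previous proposition) to identify $x^{n-1}\cdot x$ with $x^n$:
\[
x(x^{n-1}(xz)) \;=\; (x(x^{n-1}\cdot x))z \;=\; (x\cdot x^n)z \;=\; x^{n+1}z.
\]
The left side equals $L_x L_{x^{n-1}} L_x(z)=L_x^{n+1}(z)$ by the inductive hypothesis, so $L_x^{n+1}=L_{x^{n+1}}$. (For $n=1$ this specializes to the left alternative law $x(xz)=x^2z$, obtained by setting $y=1$ in Bol.)

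Finally I would apply this statement to the element $S(x)$ to obtain $L_{S(x)^n}=L_{S(x)}^n=L_x^{-n}$ for $n\ge 0$. Since the subloop generated by $x$ is a group, setting $x^{-n}:=S(x)^n$ is unambiguous, and for arbitrary $m,n\in\Z$ we conclude
\[
L_{x^m}L_{x^n}\;=\;L_x^{m}L_x^{n}\;=\;L_x^{m+n}\;=\;L_{x^{m+n}},
\]
which is precisely left monoalternativity; the cases $(m,n)=(-1,1)$ and $(1,-1)$ recover the two special identities in the statement. The main obstacle, modest as it is, is the inductive step: it relies crucially on power-associativity to rewrite $x^{n-1}\cdot x$ as $x^n$ inside the Bol identity, since otherwise the chosen parenthesization $x^n=x(x(\cdots(xx)))$ would not match the grouping produced on the right-hand side of Bol. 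Everything else reduces to routine bookkeeping with $S$.
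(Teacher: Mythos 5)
Your argument is correct. Note, however, that the paper does not prove this proposition at all: it is imported by citation (attributed to the literature on Bol/Bruck loops), as are the surrounding facts on power-associativity, the automorphic inverse property and $S^2=\Id$. So there is nothing in the paper to compare against step by step; what you have written is essentially the standard proof that left Bol loops are left power-alternative. Your chain is sound: $S(x)\cdot x=1$ from AIP and $S^2=\Id$, then left Bol with $y=S(x)$ gives the left inverse property $L_{S(x)}=L_x^{-1}$, then Bol with $y=x^{n-1}$ plus power-associativity (to identify $x^{n-1}x$ with $x^n$ despite the fixed right-normed parenthesization) gives $L_{x^n}=L_x^n$ for $n\ge 0$, and applying this to $S(x)$ extends it to negative exponents. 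One small remark: the left inverse property is already a consequence of left Bol alone (so AIP is not strictly needed for that step), but since you only invoke facts the paper has already quoted, your use of AIP and $S^2=\Id$ is perfectly legitimate and introduces no circularity.
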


The fundamental connection between commutative automorphic loops and left Bruck loops is described in the next result. 

\begin{proposition}[\cite{JKV11}]
\label{prop:JKV11}
Let $(Q, x \cdot y)$ be a commutative automorphic loop and $P_{x} := \dot{L}^{-1}_{x^{-1}} \dot{L}_x = \dot{L}_x \dot{L}^{-1}_{x^{-1}}$. Then
\begin{displaymath}
P_x P_y P_x = P_{P_{x}(y)}
\end{displaymath}
for all $x,y \in Q$. Moreover, in case that $(Q, x \cdot y)$ is uniquely $2$-divisible--i.e. for any $x \in Q$ there exists a unique $\sqrt{x} \in Q$ such that $x = \sqrt{x} \cdot \sqrt{x}$--then the product 
\begin{displaymath}
xy := P_{\sqrt{x}}(y)
\end{displaymath}
defines a left Bruck loop structure on $Q$ and
\begin{displaymath}
x \cdot y = x \phi_x (y)
\end{displaymath}
with $\quad \phi_x := \dot{l}(\sqrt{x}^{-1},x)$.
\end{proposition}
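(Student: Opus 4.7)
The plan is to establish the three assertions in sequence, working in the loop $(Q, x\cdot y)$ directly. The commutation $\dot{L}_{x^{-1}}\dot{L}_x = \dot{L}_x\dot{L}_{x^{-1}}$, which makes $P_x$ well-defined, is immediate from power-associativity: the subloop generated by $x$ is a group, so $x\cdot(x^{-1}\cdot z) = z = x^{-1}\cdot(x\cdot z)$ for every $z\in Q$, and taking inverses gives the two equivalent formulas for $P_x$.

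The conjugation identity $P_xP_yP_x = P_{P_x(y)}$ is the main technical core. I would exploit the left automorphic property---that each $\dot{l}(x,y)$ is an automorphism of $(Q,\cdot)$---together with commutativity and the automorphic inverse property $S(u\cdot v) = S(u)\cdot S(v)$. Rewriting $P_x(z) = x\cdot(x^{-1}\dot{\backslash}z)$ and expanding $P_xP_yP_x(z)$ step by step, one reorganizes the nested expression by pulling inner $\dot{l}$-operators out across outer products through the automorphic identity; the result should take the form $w\cdot(w^{-1}\dot{\backslash}z) = P_w(z)$ with $w = P_x(y)$. The main obstacle will be the nested non-associative bookkeeping, which is cleanest to handle by lifting the computation to the Hopf algebra $\field[\dot{F}]$, where cocommutativity collapses repeated Sweedler indices and the automorphic identity becomes a multilinear coalgebra statement.

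Assuming unique $2$-divisibility, set $xy := P_{\sqrt{x}}(y)$. The unit axiom is trivial since $P_1 = \Id$, and both divisions exist because each $P_{\sqrt{x}}$ is a bijection and the square-root map is bijective. Left Bol reduces via the conjugation identity to $P_{P_{\sqrt{x}}(\sqrt{y})} = P_{\sqrt{x(yx)}}$, and the key step will be to check $(P_{\sqrt{x}}(\sqrt{y}))^{\cdot 2} = x(yx)$, which I would verify directly from the definitions using power-associativity. The automorphic inverse property $S(xy) = S(x)S(y)$ in the new product follows from the intertwining $S\circ P_x = P_{S(x)}\circ S$ (a consequence of $S$ being an automorphism of $(Q,\cdot)$) together with $\sqrt{S(x)} = S(\sqrt{x})$, which itself follows from uniqueness of square roots since $(\sqrt{x}^{-1})^{\cdot 2} = x^{-1}$. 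Finally, the factorization $x\cdot y = x\phi_x(y)$ with $\phi_x := \dot{l}(\sqrt{x}^{-1},x)$ is verified by direct unfolding: substituting $x\phi_x(y) = P_{\sqrt{x}}(\phi_x(y))$ and expanding both operators, the expression telescopes to $x\cdot y$ via $\sqrt{x}^{-1}\cdot x = \sqrt{x}$ and the defining equations of the left division.
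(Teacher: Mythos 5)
First, a point of reference: the paper does not prove this proposition at all --- it is quoted from \cite{JKV11}, so your attempt can only be measured against that source. Measured that way, it contains one outright error and one essential gap. The error is your very first step: you claim that power-associativity yields $x\cdot(x^{-1}\cdot z)=z=x^{-1}\cdot(x\cdot z)$ for \emph{every} $z\in Q$. Power-associativity only says that the subloop generated by the single element $x$ is a group; it says nothing about how $\dot{L}_x$ and $\dot{L}_{x^{-1}}$ act on an arbitrary $z$ outside that subloop. The left inverse property you are invoking fails in general commutative automorphic loops --- indeed, if it held, then $\dot{L}_{x^{-1}}=\dot{L}_x^{-1}$ and $P_x$ would collapse to $\dot{L}_x^{2}$, trivializing much of the construction. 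The commutation $\dot{L}_x\dot{L}_{x^{-1}}=\dot{L}_{x^{-1}}\dot{L}_x$ that makes the two formulas for $P_x$ agree is a genuine, nontrivial lemma in \cite{JKV11}, obtained from the automorphic inverse property and the fact that inner mappings are automorphisms, not from power-associativity.

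The essential gap is the conjugation identity $P_xP_yP_x=P_{P_x(y)}$, which is the technical heart of the whole proposition. Your paragraph about ``pulling inner $\dot{l}$-operators out across outer products'' until ``the result should take the form $P_w(z)$'' is a statement of intent, not an argument; in \cite{JKV11} this identity sits at the end of a substantial chain of identities for the inner mapping group, and nothing in your sketch reproduces or replaces that chain. (Lifting to the Hopf algebra $\field[\dot{F}]$ does not help here either, since the proposition is about discrete loops and the combinatorial difficulty is the same.) By contrast, the pieces you build on top are sound: granting the conjugation identity and the commutation above, the reduction of left Bol to $\bigl(P_{\sqrt{x}}(\sqrt{y})\bigr)^{\cdot 2}=x(yx)$ (which follows by evaluating $P_{\sqrt{x}}P_{\sqrt{y}}P_{\sqrt{x}}=P_{P_{\sqrt{x}}(\sqrt{y})}$ at $1$ and using $P_w(1)=w\cdot w$), the automorphic inverse property via $S P_x S = P_{S(x)}$ together with $\sqrt{S(x)}=S(\sqrt{x})$, and the direct unfolding of $x\phi_x(y)$ using $\sqrt{x}^{-1}\cdot x=\sqrt{x}$ are all correct. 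So the proposal has a workable outer architecture but is unproved at its core and wrongly justified at its base.
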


\subsubsection{The connection between commutative automorphic and left Bruck formal loops.} Now we will discuss the linearization of the previous important results to obtain basic properties of commutative automorphic and left Bruck Hopf algebras. Let $(\field[T], \dot{F})$ be a commutative automorphic formal loop. Define on $\field[\dot{F}]$ maps 
\begin{align}
& S(x):= x^{-1} := x \dot{\backslash} 1, \nonumber \\ & \dot{l}(x,y) \colon z \mapsto (x\1 \cdot y\1) \dot {\backslash} (x\2 \cdot (y\2 \cdot z)), \nonumber\\
& r(x)  \text{ so that } x = r(x\1) \cdot r(x\2) \text{ and} \nonumber \\ & \phi_x := \dot{l}(S(r(x\1) ), x\2). \label{eq:defphi}
\end{align}
The map $r$ is recursively defined  on elements of $\field[\dot{F}]$. For instance $1 = r(1) \cdot r(1)$ implies $r(1) = 1$; $a = r(a) \cdot r(1)+r(1) \cdot r(a) = 2r(a)$ implies $r(a) = \frac{1}{2}a$ for any $a \in T$, etc.  

The linearization of the automorphic inverse property gives
\begin{equation}\label{eq:S}
S(x \cdot y) = S(x) \cdot S(y) 
\end{equation}
for all $ x,y \in \field[\dot{F}]$ and  
\begin{displaymath}
S^2 = \Id.
\end{displaymath}
We also have
\begin{equation}\label{eq:delta_r}
\Delta(r(x)) = r(x\1) \otimes r(x\2)
\end{equation}
and
\begin{equation}\label{eq:delta_phi}
\Delta(\phi_x(y)) = \phi_{x\1}(y\1) \otimes \phi_{x\2}(y\2).
\end{equation}
Both equalities are consequence of adequate linearizations although, for instance, to prove the former we also could observe that $\Delta(x) = \Delta (r(x\1)) \cdot \Delta(r(x\2))$ and $\Delta(x) = (r(x\1) \otimes r(x\2)) \cdot (r(x\3) \otimes r(x\4))$ and use that $\Delta(x) = \varphi(x\1) \cdot \varphi(x\2)$ uniquely determines the map $\varphi \colon \field[\dot{F}] \rightarrow \field[\dot{F}] \otimes \field[\dot{F}]$. The same kind of arguments leads to
\begin{displaymath}
\Delta (x\backslash y) = x\1 \backslash y\1 \otimes x\2 \backslash y\2, \quad \Delta(x/y) = x\1 / y\1 \otimes x\2 / y\2
\end{displaymath}
(see \cite{PI07} for a proof) from which (\ref{eq:delta_phi}) is a trivial consequence. Also notice that $\Delta(\phi_x(a)) = \phi_{x}(a) \otimes 1 + 1 \otimes \phi_{x}(a)$ for any $a \in T$ so
\begin{displaymath}
\phi_x(T) \subseteq T.
\end{displaymath}

Using the linearization of the left automorphic property we easily obtain
\begin{equation}
\label{eq:phiprod}
\phi_x( y \cdot z) = \phi_{x\1}(y) \cdot \phi_{x\2}(z)
\end{equation}
for all $x,y \in \field[\dot{F}]$. Since 
\begin{displaymath}
\Delta S = (S \otimes S) \Delta
\end{displaymath}
then
\begin{displaymath}
S \phi_x S = \phi_{S(x)}.
\end{displaymath}
Moreover, $S\phi_x S(1) = \epsilon(x) 1 = \phi_x(1)$ and for any $a \in T$, $S\phi_x S(a) = - S \phi_x(a) = \phi_x(a)$ because $\phi_x(T) \subset T$. As a unital algebra $\field[\dot{F}]$ is generated by $T$ so (\ref{eq:S}) and (\ref{eq:phiprod}) imply
\begin{equation}\label{eq:SphiS=phi}
\phi_{S(x)} = \phi_x.
\end{equation}
Linearizing the formulas in Proposition \ref{prop:JKV11} we obtain the fundamental connection between commutative automorphic and left Bruck formal loops. 
\begin{proposition}
\label{prop:CA-LB}
Let  $(\field[T], \dot{F})$ be a  commutative automorphic formal loop. The product
\begin{equation}\label{eq:productxy}
xy := S(r(x \1)) \dot{\backslash}(r(x\2) \cdot y)
\end{equation}
 induces a left Bruck Hopf algebra structure on $\field[T]$. We also have
 \begin{displaymath}
 x \cdot y = x\1 \phi_{x\2}(y)
 \end{displaymath}
 and 
 \begin{displaymath}
\dot{L}_a = L_a
 \end{displaymath}
for all $a \in T$. Moreover, $F(x,y) := \pi_T( xy)$ defines a  left Bruck formal loop $(\field[T],F)$ with Hopf algebra of formal distributions  $\field[F] = (\field[T],xy)$.
\end{proposition}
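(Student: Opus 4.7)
The plan is to deduce Proposition~\ref{prop:CA-LB} by systematic linearization of Proposition~\ref{prop:JKV11}. The formula $xy := S(r(x\1)) \dot{\backslash} (r(x\2) \cdot y)$ is the linearization of the loop product $P_{\sqrt{x}}(y) = \sqrt{x} \cdot (\sqrt{x}^{\,-1} \dot{\backslash} y)$, with square roots replaced by the map $r$. No $2$-divisibility hypothesis is needed: over a base field of characteristic zero the equation $x = r(x\1) \cdot r(x\2)$ is uniquely solvable by induction on the filtration of $\field[\dot{F}]$ by powers of $\ker \epsilon$, as noted after (\ref{eq:defphi}), so $r$ is well-defined on the whole connected bialgebra.

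I would first check that this product is a coalgebra morphism. Combining $\Delta S = (S \otimes S)\Delta$, identity (\ref{eq:delta_r}), and the compatibility of $\Delta$ with $\dot{\backslash}$ and $\cdot$ recorded in the excerpt, cocommutativity yields $\Delta(xy) = x\1 y\1 \otimes x\2 y\2$; the associated left and right divisions are then determined recursively, as in the subsection on connected Hopf algebras, so $(\field[T], xy)$ is a Hopf algebra in the sense of this paper. The left Bol identity and the automorphic inverse property for $xy$ then follow from the linearization principle of \cite{PI07}: Proposition~\ref{prop:JKV11} together with the classical theory of Bruck loops shows that in any uniquely $2$-divisible commutative automorphic loop the operation $x \star y := P_{\sqrt{x}}(y)$ satisfies both defining identities of left Bruck loops, and their multilinear linearizations are exactly the Hopf algebra identities we require. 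Linearizing the loop equation $x \cdot y = x \phi_x(y)$ in the same way gives $x \cdot y = x\1 \phi_{x\2}(y)$, and the equivalence of categories of \cite{MP10} identifies $F(x,y) := \pi_T(xy)$ as a left Bruck formal loop with $\field[F] = (\field[T], xy)$.

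For the identification $\dot{L}_a = L_a$ on $a \in T$, a direct computation with $r(a) = \tfrac{1}{2}a$ and $\Delta(a) = a \otimes 1 + 1 \otimes a$ gives $ay = (-\tfrac{1}{2}a) \dot{\backslash} y + \tfrac{1}{2}a \cdot y$, and the primitive division rule $b \dot{\backslash} y = -b \cdot y$ recalled in the subsection on connected Hopf algebras reduces this to $a \cdot y$. The main obstacle in the plan is confirming that every loop-theoretic identity invoked admits a genuine multilinear linearization applicable to the Hopf algebra: once the recursive construction of $r$ is in hand this is routine, but careful bookkeeping is required to ensure that no hidden appeal to $2$-divisibility creeps into the derivation, in particular into the passage from the $P$-function identity $P_x P_y P_x = P_{P_x(y)}$ to the left Bol law.
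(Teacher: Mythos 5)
Your proposal is correct and follows exactly the route the paper intends: the paper gives no written proof of Proposition~\ref{prop:CA-LB} beyond the preceding sentence ``Linearizing the formulas in Proposition~\ref{prop:JKV11} we obtain\dots'', and your argument is simply a fleshed-out version of that linearization, including the recursive construction of $r$ replacing unique $2$-divisibility and the computation $ay = (-\tfrac{1}{2}a)\dot{\backslash}y + \tfrac{1}{2}a\cdot y = a\cdot y$ giving $\dot{L}_a = L_a$. The only detail worth making explicit is the one you already flag: the identities being linearized involve the unary operation $\sqrt{\cdot}$, which is not in the loop signature, so the linearization principle of \cite{PI07} must be applied in the extended signature where $\sqrt{\cdot}$ linearizes to $r$.
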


\subsubsection{The automorphic property of left Bruck Hopf algebras.}
The power-associa\-tivity of commutative automorphic loops implies
\begin{eqnarray}\label{eq:Hopf-pa}
 && x\1 \cdot (x\2 \cdot (\cdots (x_{(n+m-1)} \cdot x_{(n+m)}) \\
&& \quad \quad =  (x\1 \cdot  (\cdots (x_{(m-1)} \cdot x_{(m)})))\cdot  (x_{(m+1)} \cdot  (\cdots (x_{(m+n-1)} \cdot x_{(m+n)})))\nonumber
\end{eqnarray}
for all elements $x$ in any (cocommutative and coassociative) commutative automorphic Hopf algebra $\dot{H}$. With $x = a^{m+n}$, $a \in \Prim(\dot{H})$, we get $(m+n)! a^{m+n} = (m+n)! a^m \cdot a^n$, so 
\begin{displaymath}
	a^{m+n} = a^m \cdot a^n,
\end{displaymath} 
i.e., the subalgebra of $\dot{H}$ generated by any primtive element is associative and commutative. The same remains valid for left Bruck Hopf algebras, but it is false for non primitive elements. Moreover, in any (cocommutative and coassociative) left Bruck Hopf algebra $H$ we have
\begin{equation}
\label{eq:monoalternativeH}
L_{a^n} = L^n_a 
\end{equation}
for all  $a \in \Prim(H)$ and
\begin{equation}\label{eq:antipodeS}
S(x\1)(x\2 y) = \epsilon(x)y = x\1 (S(x\2) y)
\end{equation}
for all $x,y \in H$ (see \cite{MP10a}). 

The left automorphic property of left Bruck loops implies the automorphic property for left Bruck Hopf algebras:
\begin{equation}
\label{eq:lCA} l(x,y)(wz) = l(x\1,y\1)(w)l(x\2,y\2)(z)
\end{equation}
for all $x, y, w, z \in H$.

\subsubsection{The tangent Lie triple system of a left Bruck formal loop.} In dealing with the tangent space of commutative automorphic formal loops it will be very important to keep  in mind that for any primitive elements $a,b,c$  in a left Bruck Hopf algebra $H$ we have  $ab = ba$  and
\begin{equation}
\label{eq:Lts}
[[L_a,L_b],L_c] = L_{[a,b,c]}
\end{equation}
with $[a,b,c]:=a(bc)-b(ac)$  \cite{MP10a}. Beware, $H$ might fail to be commutative although primitive elements commute each other in any  left Bruck Hopf algebra.  Another important property that we will need is
\begin{equation}
\label{eq:der}
[L_a,L_b] \in \Der(H)
\end{equation}
where $\Der(H)$ stands for the Lie algebra of derivations of $H$. The triple product $[a,b,c]$ endows $\Prim(H)$ with the algebraic structure of Lie triple system.

\subsubsection{Lie triple systems and their universal enveloping algebras.} A \emph{Lie triple system} is a vector space $T$ equipped with a trilinear product $[a,b,c]$ such that for every $a,b,c,a',b' \in T$
\begin{enumerate}
\item $[a,b,c] = - [b,a,c]$,
\item $[a',b',[a,b,c]] = [[a',b',a],b,c]+[a,[a',b',b],c] + [a,b,[a',b',c]]$ and
\item (\emph{Jacobi identity}) $[a,b,c] + [b,c,a] + [c,a,b] = 0$.
\end{enumerate}
It is known \cite{PI05} that for any Lie triple system there exist a non-associative left Bruck Hopf algebra $(U(T),xy)$ and a bijective map $T \rightarrow \Prim(U(T))$ given by $a \mapsto a$ so that 
\begin{enumerate}
\item $[a,b,c] = a(bc)-b(ac)$ and
\item $(a,y,z) = -(y,a,z)$ for any $a,b,c \in T$ and $y,z \in U(T)$,
\end{enumerate}
where $(x,y,z) := (xy)z -x(yz)$ is the \emph{associator} of $x,y,z$. $U(T)$ is  called the \emph{universal enveloping algebra} of $T$ and it is universal with respect to the latter two properties:  for any unital algebra $A$ and any linear map $\iota \colon T \rightarrow A$ satisfying
\begin{displaymath}
\iota([a,b,c]) = \iota(a)(\iota(b)\iota(c)) - \iota(b)(\iota(a)\iota(c)) \quad \text{and} \quad (\iota(a),y,z) = - (y,\iota(a),z)
\end{displaymath}
for all $a,b,c \in T$ and $y, z \in A$ there exists a unique homomorphism $\varphi \colon U(T)\rightarrow A$ of unital algebras with $\varphi(a) = \iota(a)$ for any $a \in T$. In other words, $U(T)$ can be obtained as the quotient of the free unital non-associative algebra $\field\{T\}^{\#}$ on $T$ by the ideal generated by
\begin{displaymath}
R := \{ [a,b,c] - a(bc) + b(ca), (a,y,z) + (y,a,z) \mid a,b,c \in T \text{ and }  y,z \in \field\{T\}^{\#}\}.
\end{displaymath}
From this description it is clear that $U(T)$ is generated by $T$. In fact, over fields of characteristic zero
\begin{displaymath}
\{ a^n \mid a \in T, n \in \N\} \text{ spans } U(T).
\end{displaymath}
Working with these generators will greatly simplify  computations. 

The underlying coalgebra structure of $U(T)$ can be identified with that of $\field[T]$, so we may think of $U(T)$ as the Hopf algebra $\field[F]$ of formal distributions with support at the identity of some left Bruck formal loop $(\field[T], F)$ \cites{PI07,MP10}. In some sense we can say that the Lie triple system $T$ integrates to the left Bruck formal loop $(\field[T], F)$.

\subsubsection{The Poincar\'e-Birkhoff-Witt theorem.} The existence  for $U(T)$ of Poincar\'e-Birkhoff-Witt bases  is known \cite{PI05}.  Fix a basis $B$ of the vector space $T$ and a total order on $B$, then $U(T)$ has a (Poincar\'e-Birkhoff-Witt) basis of the form
\begin{displaymath}
B_{\PBW}:=\{ b_{1}(b_{2}(\cdots (b_{{l-1}} b_{l}))) \mid  b_{1}, \dots, b_{l}  \in B,  b_{1} \leq b_{2} \leq \cdots \leq b_{l}  \text{ and } l \geq 0 \}. 
\end{displaymath}
The only primitive elements in $B_{\PBW}$ are the elements with $l = 1$, i.e. the elements in $B$. We will use these bases in Section~\ref{subsec:commutative}.

\subsection{Tangent algebras of  commutative automorphic formal  loops}
Being the primitive elements of the formal loop $(\field[T],F)$, $T$ is closed under many multilinear operations expressible in terms of the product of $\field[F]$. $T$ is the \emph{tangent space} of the formal loop $F$ and it becomes a \emph{Sabinin algebra} with some of these operations \cite{MP10}. Sabinin algebras are the non-associative counterpart of Lie algebras, in the same way as formal loops are the non-associative counterpart of formal groups. The equivalence of categories $F \mapsto \field[F]$ between formal loops and connected Hopf algebras extends to an equivalence between these categories and the category of Sabinin algebras \cite{MP10}. Thus, the study of the space of primitive elements of non-associative Hopf algebras resembles the (local) Lie theory of Lie groups. However, the general notion of Sabinin algebra requires two infinite families of multilinear operations, that in our setting are expressible in terms of a ternary one. Thus, rather than using the general framework, we will focus on this ternary product.

\begin{definition}
A \emph{commutative automorphic Lie triple system} $(T, [a,b,c])$ is a Lie triple system $(T,[a,b,c])$ that satisfies
\begin{displaymath}
 [[a,b,c],a',b'] = [[a,a',b'],b,c] + [a,[b,a',b'],c] + [a,b,[c,a',b']]
\end{displaymath}
for any $a,b,c,a',b' \in T$.
\end{definition}    

Notice that the second axiom of Lie triple system is superfluous for commutative automorphic Lie triple systems.

\begin{lemma}
\label{lem:derivation}
Let $(\field[T],\dot{F})$ be a  commutative automorphic formal loop. For any $x \in \field[\dot{F}]$ and $a \in T$ we have
\begin{enumerate}
\item $\dot{l}(x,1) = \epsilon(x) \Id = \dot{l}(1,x)$ and
\item $\dot{l}(x,a)$ is a derivation of $ \field[\dot{F}]$.
\end{enumerate}
\end{lemma}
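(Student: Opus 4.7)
The plan is to derive both claims directly from the Sweedler-form definition of $\dot{l}$ together with the defining axioms of non-associative Hopf algebras and the linearized left automorphic property from Definition \ref{def:CA}.

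For part (1), I would unfold $\dot{l}(x,1)(w) = (x\1 \cdot 1\1)\dot{\backslash}(x\2 \cdot (1\2 \cdot w))$ using $\Delta(1) = 1 \otimes 1$ and the fact that $1$ is a unit for $\cdot$. This collapses the expression to $x\1 \dot{\backslash}(x\2 \cdot w)$, which equals $\epsilon(x) w$ by the basic axiom $x\1 \dot{\backslash}(x\2 \cdot w) = \epsilon(x) w$ of a non-associative Hopf algebra. The identity $\dot{l}(1,x) = \epsilon(x)\Id$ is obtained by the same computation after swapping the roles of the two arguments.

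For part (2), I would invoke the left automorphic property in its Hopf-algebra form
\begin{displaymath}
\dot{l}(x,a)(w \cdot z) = \dot{l}(x\1, a\1)(w) \cdot \dot{l}(x\2, a\2)(z)
\end{displaymath}
and then use that $a \in T$ is primitive, so $\Delta(a) = a \otimes 1 + 1 \otimes a$. This splits the right-hand side into exactly two terms,
\begin{displaymath}
\dot{l}(x\1, a)(w) \cdot \dot{l}(x\2, 1)(z) + \dot{l}(x\1, 1)(w) \cdot \dot{l}(x\2, a)(z).
\end{displaymath}
Applying part (1) replaces the occurrences of $\dot{l}(-,1)$ by $\epsilon(-)\Id$, and the counit axioms $x\1 \epsilon(x\2) = x = \epsilon(x\1) x\2$ together with the linearity of $\dot{l}$ in its first slot collapse the expression to $\dot{l}(x,a)(w) \cdot z + w \cdot \dot{l}(x,a)(z)$. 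This is the Leibniz rule with respect to $\cdot$, so $\dot{l}(x,a)$ is a derivation of $\field[\dot{F}]$.

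There is no real obstacle: the argument is a direct Sweedler computation whose only delicate point is to remember that the splitting trick only works because $a$ is primitive, which forces the $\Delta(a)$-sum to contain exactly a $1$ on one side or the other, allowing part (1) to be invoked. Had $a$ been a general element of $\field[\dot{F}]$ the argument would break down, which is consistent with the fact that $\dot{l}(x,y)$ is only a derivation when the second slot is primitive.
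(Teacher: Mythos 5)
Your proof is correct and follows essentially the same route as the paper: part (1) by unwinding the definition of $\dot{l}$ via $\Delta(1)=1\otimes 1$ and the left-division axiom, and part (2) by applying the linearized left automorphic property with the primitive $a$ in the second slot, so that $\Delta(a)=a\otimes 1+1\otimes a$ yields exactly the two Leibniz terms after invoking part (1) and the counit axiom. Your closing remark about why primitivity of $a$ is essential matches the paper's implicit reasoning.
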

\begin{proof}
Part (1) follows from the definition of $\dot{l}$. Given $a \in T$ and  $x,w,z \in \field[\dot{F}]$
\begin{eqnarray*}
\dot{l}(x,a)(w \cdot z) &=& \dot{l}(x\1,a)(w) \cdot \dot{l}(x\2,1)(z) + \dot{l}(x\1,1)(w)  \cdot \dot{l}(x\2,a)(z) \\
&=& \dot{l}(x,a)(w) \cdot z + w \cdot \dot{l}(x,a)(z)
\end{eqnarray*}
which proves part (2).
\end{proof}

\begin{theorem}
Let $(\field[T],\dot{F})$ be a  commutative automorphic formal loop. Then $T$ is a commutative automorphic Lie triple system with the product
\begin{displaymath}
 [a,b,c] = - (a,c,b)^{\cdot}
\end{displaymath}
where $(a,b,c)^{\cdot}$ denotes the associator $(a \cdot b) \cdot c - a \cdot (b \cdot c)$.
\end{theorem}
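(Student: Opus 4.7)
The triple bracket unpacks as $[a,b,c] = a\cdot(c\cdot b) - (a\cdot c)\cdot b$, so all its properties come from the structure of $\field[\dot{F}]$. The two axioms that follow from commutativity alone are antisymmetry $[a,b,c] = -[b,a,c]$ and the Jacobi identity $[a,b,c]+[b,c,a]+[c,a,b]=0$. For these, I would expand each associator and use $x\cdot y = y\cdot x$ to rewrite the factors so that the terms cancel in pairs; for instance, commutativity gives $(b\cdot c)\cdot a = a\cdot(c\cdot b)$ and $b\cdot(c\cdot a) = (a\cdot c)\cdot b$, whence $(a,c,b)^{\cdot} + (b,c,a)^{\cdot} = 0$, which is antisymmetry. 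The Jacobi identity follows analogously from commutativity.

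For the commutative automorphic identity the key tool is Lemma~\ref{lem:derivation}: $\dot{l}(a',b')$ is a derivation of $(\field[\dot{F}],\cdot)$ whenever $a',b'\in T$. Using the definition of $\dot{l}$ together with $\Delta(a'\cdot b') = (a'\cdot b')\otimes 1+1\otimes(a'\cdot b')+a'\otimes b'+b'\otimes a'$ and the identity $x\dot{\backslash} y = -x\cdot y$ valid for primitive $x$, I would first show
\begin{equation*}
\dot{l}(a',b')(c) = -(a',b',c)^{\cdot} = [a',c,b']
\end{equation*}
for any primitive $c$. A short coproduct calculation shows that the associator of three primitive elements is primitive, so $\dot{l}(a',b')$ restricts to a linear map $T\to T$.

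Applying the derivation $\dot{l}(a',b')$ to $[a,b,c] = a\cdot(c\cdot b) - (a\cdot c)\cdot b$ via Leibniz produces six terms which regroup as three associators:
\begin{equation*}
\dot{l}(a',b')([a,b,c]) = -(\dot{l}(a',b')(a),c,b)^{\cdot} - (a,\dot{l}(a',b')(c),b)^{\cdot} - (a,c,\dot{l}(a',b')(b))^{\cdot}.
\end{equation*}
Rewriting each associator as a triple bracket and using $\dot{l}(a',b')(x) = [a',x,b']$ turns this into
\begin{equation*}
[a',[a,b,c],b'] = [[a',a,b'],b,c] + [a,[a',b,b'],c] + [a,b,[a',c,b']],
\end{equation*}
and antisymmetry applied to each of the four bracketed expressions flips every sign, producing the commutative automorphic axiom
\begin{equation*}
[[a,b,c],a',b'] = [[a,a',b'],b,c] + [a,[b,a',b'],c] + [a,b,[c,a',b']].
\end{equation*}
The remark following the definition of commutative automorphic Lie triple system then supplies the usual Lie triple system axiom for free. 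The main obstacle is the careful Leibniz bookkeeping with the non-associative product $\cdot$; once that identity is in hand, everything reduces to applications of antisymmetry.
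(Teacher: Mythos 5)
Your proof is correct, and the second half---verifying the commutative automorphic axiom---follows exactly the paper's route: compute $\dot{l}(a',b')(c)=-(a',b',c)^{\cdot}=[a',c,b']$ for primitive arguments, invoke Lemma~\ref{lem:derivation} to see that $\dot{l}(a',b')$ is a derivation for the product $\cdot$, and push the Leibniz rule through the associator. Where you genuinely diverge is in establishing the Lie triple system axioms themselves. The paper gets antisymmetry, the Jacobi identity, and closure of $T$ under the bracket for free by passing to the associated left Bruck Hopf algebra $\field[F]$ of Proposition~\ref{prop:CA-LB} and quoting the known fact (\ref{eq:Lts}) that primitives of a left Bruck Hopf algebra form a Lie triple system; commutativity is used only to rewrite $a(bc)-b(ac)$ as $-(a,c,b)^{\cdot}$. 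You instead prove antisymmetry and Jacobi directly from $x\cdot y=y\cdot x$ (both are valid: in any commutative nonassociative algebra the associator satisfies $(x,y,z)=-(z,y,x)$ and its cyclic sum vanishes), and you check closure by the coproduct computation showing that the associator of three primitives is primitive. Your route is more self-contained---it avoids the detour through the left Bruck structure and the external reference---at the cost of a few explicit expansions; the paper's route is shorter but leans on machinery it has already set up and will need anyway. Both correctly dispose of the second LTS axiom via the remark that it is superfluous in the presence of antisymmetry, Jacobi and the commutative automorphic identity.
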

\begin{proof}
Since $\field[F]$ is a  left Bruck Hopf algebra then $T$ is a Lie triple system with the product $[a,b,c] =  a(bc) - b(ac)$. By Proposition~\ref{prop:CA-LB} we get 
\begin{displaymath}
[[\dot{L}_a,\dot{L}_b],\dot{L}_c] = \dot{L}_{[a,b,c]}
\end{displaymath}
with 
\begin{displaymath}
[a,b,c] = a \cdot (b \cdot c) - b \cdot (a \cdot c) \stackrel{\langle 1 \rangle}{=} - (a \cdot c) \cdot b + a \cdot (c \cdot b) = - (a,c,b)^{\cdot},
\end{displaymath}
where $\langle 1 \rangle$ follows from commutativity. To show that $(T, [a,b,c])$ satisfies the commutative automorphic condition we observe that for any $a, b \in T$ and $x \in \field[\dot{F}]$
\begin{eqnarray*}
\dot{l}(a,b)(x)  &=& (a \cdot b) \dot{\backslash} x + a \cdot{\backslash} (b \cdot x) + b \dot{\backslash} (a \cdot x) + a \cdot (b \cdot x) \\
&\stackrel{\langle 1 \rangle}{=}& -(a \cdot b) \cdot x + a \cdot (b \cdot x) = - (a,b,x)^{\cdot}.
\end{eqnarray*}
where $\langle 1 \rangle$ follows from the axioms satisfied by the left division $\dot{\backslash}$ in any Hopf algebra. Hence 
\begin{displaymath}
\dot{l}(a,b)(c) = - (a,b,c)^{\cdot} = [a,c,b] = - [c,a,b].
\end{displaymath}
By Lemma \ref{lem:derivation}, we can conclude that $c \mapsto [c,a,b]$ defines a derivation of $(T,[a,b,c])$. This proves that $(T,[a,b,c])$ is a commutative automorphic Lie triple system.
\end{proof}

\begin{proposition}
\label{prop:phi}
Let $(\field[T],\dot{F})$ be a commutative automorphic formal loop, $\phi_x$ as defined in (\ref{eq:defphi}) and $xy$ as defined in (\ref{eq:productxy}). For any $a \in T$ and any $x \in \field[T]$ we have
\begin{displaymath}
 \phi_x (a) = S(x\1) (a x\2).
\end{displaymath}
\end{proposition}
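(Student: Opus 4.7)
The plan is to collapse everything down using Proposition~\ref{prop:CA-LB} and the antipode identity~\eqref{eq:antipodeS}. The key observation is that the formula $x\cdot y = x_{(1)}\phi_{x_{(2)}}(y)$ already isolates $\phi$, so one only needs to invert it by a suitable ``antipode on the left'' move; commutativity of $\cdot$ together with the identity $\dot{L}_a = L_a$ for $a\in T$ is what converts a $\cdot$-product into a Bruck product with the primitive factor on the left.

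Concretely, I would first specialize the Proposition~\ref{prop:CA-LB} identity to $y=a$, use commutativity to rewrite $x\cdot a = a\cdot x$, and then use $\dot{L}_a = L_a$ to replace $a\cdot x$ by the Bruck product $ax$. This gives the key intermediate formula
\begin{displaymath}
 ax \;=\; x_{(1)}\,\phi_{x_{(2)}}(a),
\end{displaymath}
valid for every $a\in T$ and every $x$ in the Hopf algebra. Applying this with $x$ replaced by the second Sweedler component of the original element and invoking coassociativity, one obtains
\begin{displaymath}
 a\,x_{(2)} \;=\; x_{(2)}\,\phi_{x_{(3)}}(a),
\end{displaymath}
where $\Delta^2(x) = x_{(1)}\otimes x_{(2)}\otimes x_{(3)}$. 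Left-multiplying by $S(x_{(1)})$ in the Bruck product yields $S(x_{(1)})(a\,x_{(2)}) = S(x_{(1)})(x_{(2)}\,\phi_{x_{(3)}}(a))$, which is the form the antipode axiom is built to devour.

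The only genuinely delicate step is the final application of~\eqref{eq:antipodeS}: that identity applies to a pair $(y_{(1)},y_{(2)})$ coming from a single coproduct, whereas in our expression $x_{(1)}$, $x_{(2)}$ and $x_{(3)}$ all come from $\Delta^2(x)$. To handle this I would re-bracket $\Delta^2(x)$ as $(\Delta\otimes\Id)\Delta(x)$, writing $\Delta(x) = x'\otimes x''$, so that $x_{(1)}\otimes x_{(2)} = \Delta(x')$ and $x_{(3)} = x''$. Then~\eqref{eq:antipodeS} applied to $x'$ with $y = \phi_{x''}(a)$ collapses the inner sum to $\epsilon(x')\,\phi_{x''}(a)$, and the counit axiom together with linearity of $z\mapsto\phi_z(a)$ (which is immediate from the linearity of $r$, $S$, and $\dot{l}(-,-)$) finishes the job:
\begin{displaymath}
 S(x_{(1)})(a\,x_{(2)}) \;=\; \phi_{\epsilon(x')x''}(a) \;=\; \phi_x(a).
\end{displaymath}
Everything else is bookkeeping.
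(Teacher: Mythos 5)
Your argument is correct and is essentially the paper's own proof read in the opposite direction: the paper starts from $S(x\1)(ax\2)$, converts $ax\2$ to $x\2\cdot a = x\2\phi_{x\3}(a)$ via $\dot{L}_a = L_a$ and commutativity, and collapses with (\ref{eq:antipodeS}), which are exactly your steps. The care you take with coassociativity before applying the antipode identity is sound but routine; the paper leaves it implicit.
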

\begin{proof}
The commutativity of $\field[\dot{F}]$ implies 
\begin{eqnarray*}
S(x\1)(a x\2) &=& S(x\1)(a \cdot x\2) = S(x\1)(x\2 \cdot a) = S(x\1)(x\2 \phi_{x\3}(a) ) \\
&=& \epsilon(x\1) \phi_{x\2}(a) = \phi_{x}(a).
\end{eqnarray*}
\end{proof}
  
\begin{corollary}
Two commutative automorphic formal loops  are isomorphic if and only their associated commutative automorphic Lie triple systems are isomorphic.
\end{corollary}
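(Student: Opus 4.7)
The forward direction is essentially automatic: any isomorphism of commutative automorphic formal loops extends to an isomorphism $\tilde f \colon \field[\dot F] \to \field[\dot F']$ of the corresponding commutative automorphic Hopf algebras. Being a coalgebra map, $\tilde f$ restricts to a linear isomorphism $T \to T'$, and since it preserves the product $\cdot$ it preserves the associator $(a,c,b)^{\cdot}$ and hence the triple product $[a,b,c] = -(a,c,b)^{\cdot}$. The commutative automorphic axiom is then automatic on both sides.

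For the converse, let $\varphi \colon T \to T'$ be an isomorphism of commutative automorphic Lie triple systems. The plan is to lift $\varphi$ first through the associated left Bruck formal loops of Proposition~\ref{prop:CA-LB}. Since $\varphi$ is \emph{a fortiori} an isomorphism of Lie triple systems, and the tangent Lie triple systems of $(\field[T],F)$ and $(\field[T'],F')$ coincide with $T$ and $T'$, the functoriality of the universal enveloping construction (together with the identification $\field[F] \cong U(T)$ recalled in Section~\ref{sec:cafl}) produces an isomorphism $\psi \colon \field[F] \to \field[F']$ of left Bruck Hopf algebras extending $\varphi$.

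The key step is to verify that this $\psi$ is automatically an isomorphism of commutative automorphic Hopf algebras. Using the reconstruction $x \cdot y = x\1 \phi_{x\2}(y)$ of Proposition~\ref{prop:CA-LB}, and that $\psi$ already intertwines the left Bruck products, this reduces to the identity $\psi \circ \phi_x = \phi'_{\psi(x)} \circ \psi$ for every $x$. By Proposition~\ref{prop:phi}, $\phi_x(a) = S(x\1)(a x\2)$ for $a \in T$ is expressed purely in the left Bruck product and antipode, which handles primitive arguments at once. For non-primitive arguments I would induct along the PBW basis $b_1(b_2(\cdots (b_{l-1} b_l)))$ of $\field[F]$ with $b_i \in T$: because $\dot{L}_a = L_a$ for $a \in T$, equation (\ref{eq:phiprod}) with first argument $a \in T$ collapses to $\phi_x(a z) = \phi_{x\1}(a) \phi_{x\2}(z)$, an equality in the left Bruck product alone (since $\phi_{x\1}(a) \in T$ forces the $\cdot$-product on the right-hand side to agree with the left Bruck product). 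This recursion is therefore preserved by $\psi$, completing the induction and yielding $\psi(x \cdot y) = \psi(x) \cdot' \psi(y)$.

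I expect the main obstacle to be precisely this apparent circularity: $\phi_x$ seems to need $\cdot$ in its very definition, yet $\cdot$ is what we wish to reconstruct. Breaking the circle by exhibiting the recursion for $\phi_x$ entirely in left Bruck terms along the PBW basis is the decisive step; after that, propagating the identity from $T$ to all of $\field[T]$ is the routine coalgebra-generation argument used throughout Section~\ref{sec:cafl}.
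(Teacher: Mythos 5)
Your proposal is correct and follows the paper's own route: forward direction via the induced Hopf algebra isomorphism, converse via the universal property of $U(T)$ and the identification $U(T)\cong\field[F]$, then transporting the $\cdot$-product. The only difference is that you spell out in detail the paper's final one-line assertion (that a left Bruck Hopf algebra isomorphism automatically preserves $x\cdot y = x\1\phi_{x\2}(y)$) by exhibiting the recursion for $\phi_x$ purely in left Bruck terms along the PBW basis, which is a legitimate and welcome elaboration rather than a different argument.
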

\begin{proof}
Formal loops are isomorphic if and only if their corresponding bialgebras of formal distributions are isomorphic as Hopf algebras \cite{MP10}. 

If the commutative automorphic formal loops $(\field[T],\dot{F})$ and $(\field[T'],\dot{F}')$ are isomorphic then the Hopf algebras $\field[\dot{F}]$ and $\field[\dot{F}']$ are isomorphic too. This isomorphism induces an isomorphism on the associated commutative automorphic Lie triples systems.

Conversely, if the commutative automorphic Lie triple systems $T$ and $T'$ associated to the commutative automorphic formal loops $(\field[T],\dot{F})$ and $(\field[T'],\dot{F}')$ are isomorphic then, by the universal property, $U(T)$ and $U(T')$ are isomorphic Hopf algebras. Since $U(T)$ (resp. $U(T')$) is isomorphic to the Hopf algebra $\field[F]$ (resp. $\field[F']$) defined in Proposition~\ref{prop:CA-LB} then $\field[F]$ is isomorphic to $\field[F']$. This isomorphism is also an isomorphism between $\field[\dot{F}]$ and $\field[\dot{F}']$.
\end{proof}

\section{Commutative automorphic Lie triple systems}
\label{sec:calts}

The property of being commutative automorphic is rather restrictive for a Lie triple system. Over algebraically closed fields of zero characteristic any simple Lie triple system contains a two-dimensional subtriple with a basis $\{e,f\}$ such that  $[e,f,e] = e$ and $[e, f, f]=-f$  \cite{Li52}. However, this subtriple does not satisfy the commutative automorphic property which, in view of the Levy decomposition for Lie triple systems, strongly determines the structure of commutative automorphic Lie triple systems.
\begin{proposition}
Any finite-dimensional commutative automorphic Lie triple system $(T,[a,b,c])$  is solvable, i.e. $T^{(n)}=0$ for some $n \geq 1$ where $T^{(1)} := [T,T,T]$ and $T^{(i+1)} :=[T,T^{(i)},T^{(i)}]$.
\end{proposition}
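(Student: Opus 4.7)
The plan is to reduce the claim to showing that no nonzero semisimple commutative automorphic Lie triple system exists. By the Levi decomposition for Lie triple systems in characteristic zero, a finite-dimensional $T$ splits as $T = S \oplus R$ where $R$ is the solvable radical and $S$ is a semisimple Lie subtriple. The commutative automorphic identity is a universal multilinear identity, hence inherited by $S$. Thus, if every semisimple commutative automorphic Lie triple system vanishes, then $T = R$ is solvable. Extending scalars to an algebraic closure preserves both semisimplicity and the commutative automorphic identity, so I may assume $\field$ is algebraically closed; then $S$ decomposes as a direct sum of simple ideals, and picking one summand gives a putative simple commutative automorphic Lie triple system $T_0$.

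Next I would derive a contradiction from the existence of such $T_0 \neq 0$. Lister's theorem \cite{Li52} supplies elements $e, f \in T_0$ spanning a two-dimensional subtriple with $[e,f,e] = e$ and $[e,f,f] = -f$. Specializing the commutative automorphic identity to $a = c = a' = e$ and $b = b' = f$ gives
\begin{displaymath}
[[e,f,e], e, f] = [[e,e,f],f,e] + [e,[f,e,f],e] + [e,f,[e,e,f]].
\end{displaymath}
Skew-symmetry in the first two slots yields $[e,e,f] = 0$, and $[f,e,f] = -[e,f,f] = f$. The left-hand side collapses to $[e,e,f] = 0$, while the right-hand side reduces to $[e,f,e] = e$. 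Hence $e = 0$, contradicting the linear independence of $\{e,f\}$.

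The main obstacle is not computational but rather the invocation of the correct structural inputs: one needs the Levi decomposition for Lie triple systems (which behaves much like its Lie-algebraic counterpart in characteristic zero) and Lister's classification result producing the distinguished two-dimensional subtriple. Once these are granted, the specialization of the commutative automorphic identity on $\{e,f\}$ settles the matter in one line.
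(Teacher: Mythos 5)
Your proposal is correct and follows exactly the route the paper sketches in the paragraph preceding the proposition: reduce via the Levi decomposition to the semisimple case, invoke Lister's two-dimensional subtriple with $[e,f,e]=e$ and $[e,f,f]=-f$, and check that the commutative automorphic identity fails there. Your explicit specialization $a=c=a'=e$, $b=b'=f$ giving $0=e$ is a valid instance of the verification the paper leaves to the reader.
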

In this section we will describe more properties of these systems with special focus on the commutative automorphic Lie triple system $\T$ freely generated by two elements $a$ and $b$. This triple system is required to develop the Baker-Campbell-Hausdorff formula and to prove the commutativity of the commutative automorphic formal loops that we will construct to integrate commutative automorphic Lie triple systems. For any Lie triple system $(T,[a,b,c])$ let us define
\begin{eqnarray*}
R_{b,c}\colon  T & \rightarrow & T \\
a & \mapsto& [a,b,c]
\end{eqnarray*}
for any $b,c \in T$. If $T$ is a commutative automorphic Lie triple systems, these maps are derivations of $T$. The following notation will be useful:
\begin{equation*}
	[a_1,a_2,\dots,a_n] :=
	\left\{
	\begin{array}{ll}0 & \text{if } n \text{ is even} \cr
		a_1 & \text{if } n= 1\cr
		[[[a_1,a_2,a_3],\cdots],a_{n-1},a_{n}]&  \text{if } n > 1 \text{ is odd}
	\end{array}\right.
\end{equation*}
and $\stackrel{i}{\dots}c := c,c,\dots,c$  where $c$ appears  $i$ times. For instance, 
\begin{displaymath}
[a,b,\stackrel{3}{\dots}a,\stackrel{2}{\dots}b] =[a,b,a,a,a,b,b] =  [[[a,b,a],a,a],b,b].
\end{displaymath}

\begin{lemma}
\label{lem:skew}
Let $(T,[a,b,c])$ be a commutative automorphic Lie triple system and $a,b,c,a' \in T$. We have:
\begin{enumerate}
\item $[a,a',[b,a',c]]$ is skew-symmetric on $a,b$ and $c$, and
\item $[[a,a',a'],b,[c,a',a']] = 0 = [[a,a',a'],[b,a',a'],c]$.
\end{enumerate}
\end{lemma}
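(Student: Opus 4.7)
The plan is to leverage the two derivation identities available: axiom (2) of a Lie triple system says that $L_{a',b'}\colon x\mapsto [a',b',x]$ is a derivation, and the commutative automorphic axiom says that $R_{a',b'}\colon x\mapsto [x,a',b']$ is a derivation. Set $f(a,b,c):=[a,a',[b,a',c]]$ and $D(x):=[x,a',a']$.

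For part~(1), I would first apply the commutative automorphic axiom to $[a,a',b]$ with outer arguments $(a',c)$, using $[a',a',c]=0$, to obtain
\begin{equation*}
f(a,b,c)=[[a,a',b],a',c]-[[a,a',c],a',b].
\end{equation*}
This is manifestly skew-symmetric in $b$ and $c$. For the skew-symmetry in $a$ and $b$, I would then apply axiom~(2) to $[b,a',c]$, which yields
\begin{equation*}
f(a,b,c)-f(b,a,c)=[[a,a',b],a',c]+[b,D(a),c].
\end{equation*}
Swapping $a\leftrightarrow b$ and summing, using Jacobi to handle $[a,a',b]+[b,a',a]$, gives one linear relation among the symmetric sums. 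The necessary additional input is a second application of the commutative automorphic axiom, this time to $[a,a',b]$ with outer $(c,a')$, combined with Jacobi to reduce $[[a,a',b],c,a']$ to the standard form, yielding an identity of the shape $2f(a,b,c)=[[c,a',a],a',b]+[a,D(c),b]$. Cycling through $(a,b,c)\to(b,c,a)\to(c,a,b)$ produces the system whose unique consistent solution forces $f(a,b,c)+f(b,a,c)=0$. Together with the skew-symmetry in $(b,c)$, this yields the full alternating property.

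For part~(2), I would exploit the alternating property just proved by specialization. Setting the first argument to $a'$ gives $f(a',b,c)=[a',a',[b,a',c]]=0$; applying the transpositions $(1,3)$ and $(2,3)$ to the alternating $f$ then produces $[c,a',D(b)]=0$ and $[a,a',D(b)]=0$ for all $a,b,c$, that is, $R_{a',D(b)}=0$ as an operator on $T$. For the first equality, apply the derivation $R_{b,D(c)}$ to $D(a)=[a,a',a']$; the vanishing of $[x,a',D(c)]$ collapses the middle and last terms, reducing to $D([a,b,D(c)])$, which in turn vanishes after expanding $[a,b,D(c)]$ via the derivation $L_{a,b}$ and reinvoking $[x,a',D(y)]=0$. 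For $[D(a),D(b),c]=0$, apply $R_{D(b),c}$ to $D(a)$; the identity $R_{a',D(b)}=0$ annihilates the middle term in the expansion, and the outer terms collapse by the same mechanism.

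The principal obstacle is the bookkeeping in part~(1): axioms~(2) and~(iv) each individually yield only one relation among $f(a,b,c)$, $f(b,a,c)$, and the iterated brackets $[[a,a',b],a',c]$ and variants, and these two relations are a priori consistent with non-vanishing symmetric part. Locating the third independent relation (via a fresh application of the commutative automorphic axiom with permuted outer slots) and tracking the Jacobi cancellations without collapsing to a tautology is the delicate combinatorial step.
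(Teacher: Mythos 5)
Your first step in part (1) is correct and is essentially the paper's: applying the automorphic axiom to $[[a,a',b],a',c]$ (equivalently computing $[R_{a',b},R_{a',c}]$) gives $f(a,b,c)=[[a,a',b],a',c]-[[a,a',c],a',b]$ and hence skew-symmetry in $(b,c)$. The rest of part (1), however, is a plan rather than a proof, and the plan does not visibly close. Your ``third relation'' is stated only up to shape, and it does not come out as claimed: expanding $[[a,a',b],c,a']$ by the automorphic axiom gives $[[a,c,a'],a',b]+[a,[a',c,a'],b]+[a,a',[b,c,a']]$, so even after using $[a,a',[b,c,a']]=2f(a,b,c)$ you are left with a relation involving the new quantities $[[a,a',b],c,a']$ and $[[c,a,a'],a',b]$, not the clean identity $2f(a,b,c)=[[c,a',a],a',b]+[a,D(c),b]$; you acknowledge yourself that closing the resulting system is ``the delicate combinatorial step'' and you do not carry it out. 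The paper avoids this entirely by a move you never make, namely exchanging $a$ and $a'$ in the \emph{outer} bracket: Jacobi on the inner bracket gives $[a,a',[c,b,a']]=2[a,a',[c,a',b]]$, and setting $b=a$, then applying this identity again with the roles of $a$ and $a'$ swapped (via $[a,a',x]=-[a',a,x]$), yields $3[a,a',[c,a,a']]=0$, hence $f(a,c,a)=0$; together with skew-symmetry in $(2,3)$ this gives the full alternating property. Without a substitute for that step, part (1) is incomplete.

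Part (2) also has genuine gaps, even granting part (1). Your key observation that the alternating property forces $R_{a',D(y)}=0$, i.e. $[x,a',D(y)]=0$ for all $x,y$, is correct and is exactly the engine of the paper's argument. But the derivations you expand with leave survivors that this identity cannot kill. Expanding $[D(a),b,D(c)]$ as $R_{b,D(c)}$ applied to $[a,a',a']$ does annihilate the two terms containing $[a',b,D(c)]=-[b,a',D(c)]$, but leaves $[[a,b,D(c)],a',a']=D([a,b,D(c)])$; expanding $[a,b,D(c)]$ by the map $x\mapsto[a,b,x]$ then produces $D([a,b,c])$, $[c,[a,b,a'],a']$ and $[c,a',[a,b,a']]$, none of which has a $D(\cdot)$ in its third slot, so ``reinvoking $[x,a',D(y)]=0$'' does not apply, and you would need $R_{a',a'}^2=0$, which is not available at this stage. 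Similarly, in your expansion of $[D(a),D(b),c]$ the middle term contains $[a',D(b),c]=-[D(b),a',c]$, whose third argument is $c$ rather than $D(b)$, so $R_{a',D(b)}=0$ does not annihilate it as you assert. The paper instead expands using the Lie-triple-system derivation $x\mapsto[a,a',x]$ (resp. $x\mapsto[b,a',x]$), e.g. $[[a,a',a'],b,D(c)]=[a,a',[a',b,D(c)]]-[a',[a,a',b],D(c)]-[a',b,[a,a',D(c)]]$, where every term carries a factor of the form $[x,a',D(c)]$ and vanishes outright. As written, your part (2) does not go through.
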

\begin{proof}
Since $R_{a',[c,a',b]} = [R_{a',b},R_{a',c}] = -R_{a',[b,a',c]}$ then 
\begin{equation}
\label{eq:skew1}
 [a,a',[b,a',c]] = - [a,a',[c,a',b]].
\end{equation}
By Jacobi identity we have $0 = [a,a',[b,a',c]] + [a,a',[c,a',b]]  = - [a,a',[c,b,a']] - [a,a',[a',c,b]] + [a,a',[c,a',b]]$, thus
\begin{equation}
\label{eq:skew2}
 [a,a',[c,b,a']] = 2 [a,a',[c,a',b]].
\end{equation}
With $b = a$ we get 
\begin{eqnarray*}
[a,a',[c,a,a']] &=& 2 [a,a',[c,a',a]] = - 2[a',a,[c,a',a]] = - 4 [a',a,[c,a,a']] \\
&=&  4 [a,a',[c,a,a']].
\end{eqnarray*}
Thus, $3[a,a',[c,a,a']] = 0$ and, by (\ref{eq:skew2}),
\begin{equation}
\label{eq:skew3}
[a,a',[c,a',a]] = 0.
\end{equation}
Equations (\ref{eq:skew1}) and (\ref{eq:skew3}) imply that $[a,a',[b,a',c]]$ is skew-symmetric on $a,b$ and $c$.

To prove part (2) we use Jacobi identity and part (1):
\begin{eqnarray*}
[[a,a',a'],b,[c,a',a']] &=& [a,a',[a',b,[c,a',a']]] - [a',[a,a',b],[c,a',a']]\\ 
&& - [a',b,[a,a',[c,a',a']]]  \\
&=&  0 - 0 - 0 = 0
\end{eqnarray*}
since, for instance,
 \begin{displaymath}
[a',[a,a',b],[c,a',a']] = -[[a,a',b],a',[c,a',a']]  \stackrel{\langle 1 \rangle}{=} [a',a',[c,a',[a,a',b]]] = 0 
\end{displaymath}
where $\langle 1 \rangle$ follows from part (1). In a similar way we also obtain 
\begin{displaymath}
[[a,a',a'],[b,a',a'],c] = 0.
\end{displaymath}
\end{proof}

Notice that part (2) in Lemma \ref{lem:skew} implies that $R^n_{a',a'}$ is a derivation of $(T,[a,b,c])$ for any $n \geq 1$. We can improve this result a little bit.

\begin{lemma}
Let $(T,[a,b,c])$  be a commutative automorphic Lie triple system. For any $n \geq 0$ and any $a' , b' \in T$ we have
\begin{displaymath}
R^n_{a',a'}R_{a',b'} \in \Der(T)
\end{displaymath}
where $\Der(T)$ stands for the Lie algebra of all derivations of $T$.
\end{lemma}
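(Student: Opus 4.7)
Plan. My approach is to dispose of the case $n=0$ by the defining axiom $R_{a',b'}\in\Der(T)$ of a commutative automorphic Lie triple system, and then handle $n\geq 1$ by establishing two preliminary facts about $D:=R_{a',a'}$ and $E:=R_{a',b'}$ followed by a direct verification of Leibniz for $D^nE$.

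The first fact I need is that $D^n\in\Der(T)$ for every $n\geq 1$, which is precisely the observation flagged in the remark following Lemma \ref{lem:skew}: iterating the Leibniz rule for $D$ one finds that every mixed monomial $[D^ix,D^jy,D^kz]$ with at least two of $i,j,k$ positive must appear, and the vanishings $[Dx,Dy,z]=[Dx,y,Dz]=[x,Dy,Dz]=0$ of Lemma \ref{lem:skew}(2) force these mixed terms to cancel. The second fact is that $D$ and $E$ commute. To see this, specialize Lemma \ref{lem:skew}(1) with $c=a'$: the quantity $[a,a',[b,a',a']]$ is skew-symmetric in the triple $(a,b,a')$, and swapping $a$ with $a'$ produces $[a',a',[b,a',a]]=0$ by skew-symmetry of the triple product in its first two arguments. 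Hence $R_{a',Db'}=0$ for every $b'\in T$. Combined with the commutator identity $[R_{a',b'},R_{a',b''}]=R_{a',[b'',a',b']}$, which follows from a single application of the commutative automorphic axiom to $R_{a',b'}R_{a',b''}(x)$, this gives $[D,E]=R_{a',Db'}=0$, and therefore $D^nE=ED^n$ for all $n$.

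With both facts in hand, I would expand
\[
D^nE[x,y,z]=ED^n[x,y,z]=E\bigl([D^nx,y,z]+[x,D^ny,z]+[x,y,D^nz]\bigr)
\]
using $D^n\in\Der(T)$ and then apply Leibniz for $E$. This produces three "pure" summands that combine into $[D^nEx,y,z]+[x,D^nEy,z]+[x,y,D^nEz]$, together with six "mixed" terms of the form $[D^nx,Ey,z]$, $[Ex,D^ny,z]$ and their partners. The main obstacle is precisely the cancellation of these six mixed terms. My plan of attack is to polarize Lemma \ref{lem:skew}(2) via the substitution $a'\mapsto a'+sb'$: the identity $[R_{a'+sb',a'+sb'}a,b,R_{a'+sb',a'+sb'}c]=0$ valid for all $s$ produces, upon reading off the coefficient of $s$, a cross identity relating $D$ with the polarization $E+E^{\ast}$ (where $E^{\ast}:=R_{b',a'}$). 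Combining this cross identity with the vanishing $R_{a',v}=0$ for $v\in\mathrm{Im}(D)$ (a direct consequence of the commutativity argument above) and with the Jacobi identity for the triple product should isolate the pure $E$-contribution and force the sum of the six mixed terms to vanish, completing the proof.
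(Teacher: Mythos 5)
Your reduction and your preliminary facts are sound: the case $n=0$ is the defining axiom, $D^n:=R_{a',a'}^n\in\Der(T)$ follows from Lemma~\ref{lem:skew}(2) because each mixed term acquires two arguments in $\mathrm{Im}(D)$, and the commutation $[D,E]=R_{a',[b',a',a']}=0$ with $E:=R_{a',b'}$ is correct. The gap is in the one step you defer to a ``plan of attack,'' and that step is the entire content of the lemma. Polarizing Lemma~\ref{lem:skew}(2) via $a'\mapsto a'+sb'$ gives, as you say, the vanishing of the mixed terms for the pair $(D,E+E^{\ast})$ with $E^{\ast}:=R_{b',a'}$; writing $M_G$ for the sum of the six mixed terms attached to a derivation $G$ paired with $D$, this yields only $M_E+M_{E^{\ast}}=0$. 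Jacobi gives $E^{\ast}=E+L$ with $L:=[a',b',\,\cdot\,]$, so $M_E=-\tfrac12 M_L$, and you still must prove $M_L=0$ (equivalently $DL\in\Der(T)$, equivalently the operator identity $R_{b',a'}R_{a',a'}=R_{a',a'}R_{a',b'}$). Neither of your two remaining ingredients delivers this: your vanishing $R_{a',v}=0$ for $v\in\mathrm{Im}(D)$ is the statement $[x,a',[v,a',a']]=0$, whereas what is needed is the different vanishing $[a',b',[v,a',a']]=0$, i.e.\ $L\circ D=0$. That identity does follow from Lemma~\ref{lem:skew}(1) (for the skew form $f(x,y,z)=[x,a',[y,a',z]]$ one has $[b',a',[v,a',a']]=f(b',v,a')=-f(a',v,b')=-[a',a',[v,a',b']]=0$), and it is exactly the computation $R_{b',a'}R_{a',a'}(a)=\cdots=R_{a',a'}R_{a',b'}(a)$ that occupies the core of the paper's proof of the case $n=1$; without it, your argument does not close.

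There is a second, smaller gap: for $n\geq 2$ your expansion through $ED^n$ leaves mixed terms such as $[D^nx,Ey,z]$ with only one argument in $\mathrm{Im}(D)$, and your cross identities (which involve $D$, not $D^n$) do not touch them. Once $DE\in\Der(T)$ is known, the right move---and the paper's---is to factor $D^nE=D^{n-1}(DE)$: then every mixed term has one argument in $\mathrm{Im}(D^{n-1})\subseteq\mathrm{Im}(D)$ and one in $\mathrm{Im}(DE)\subseteq\mathrm{Im}(D)$, so each vanishes individually by Lemma~\ref{lem:skew}(2).
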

\begin{proof}
The result is obvious for $n = 0$, so the first case we will prove is $n = 1$, i.e. $R_{a',a'}R_{a',b'}$ is a derivation of $T$. Since $R^2_{a,a}$ is a derivation for any $a \in T$ then
\begin{equation}
\label{eq:linearizingderivations}
R_{b',a'}R_{a',a'} +  R_{a',b'}R_{a',a'} + R_{a',a'}R_{b',a'} + R_{a',a'}R_{a',b'} \in \Der(T)
\end{equation}
 for any $a',b' \in T$. However, some summands in this expression coincide. In fact, on the one hand $[R_{a',b'},R_{a',a'}] = R_{[a',a',b'],a'} + R_{a',[a',a',b']} = 0$ implies 
\begin{displaymath}
R_{a',b'}R_{a',a'} = R_{a',a'}R_{a',b'}.
\end{displaymath}
On the other hand,
\begin{eqnarray*}
R_{b',a'}R_{a',a'}(a) &=&  [[a,a',a'],b',a'] = [a,a',[a',b',a']] - [a',[a,a',b'],a'] \\
&& - [a',b',[a,a',a']]\\
&\stackrel{\langle 1 \rangle}{=}& 0 + [[a,a',b'],a',a'] - 0 = R_{a',a'}R_{a',b'}(a)
\end{eqnarray*}
and 
\begin{displaymath}
R_{a',a'}R_{b',a'} = R_{b',a'}R_{a',a'} + [R_{a',a'},R_{b',a'}] = R_{b',a'} R_{a',a'} + R_{[b',a',a'],a'},
\end{displaymath}
where $\langle 1 \rangle$ follows from part (1) in Lemma \ref{lem:skew}. Therefore, (\ref{eq:linearizingderivations}) implies 
\begin{displaymath}
4R_{a',a'}R_{a',b'}+R_{[b',a',a'],a'} \in \Der(T)
\end{displaymath}
and we get $R_{a',a'}R_{a',b'} \Der(T)$. This proves the  case $n = 1$. 

To prove the general case we can assume that $n \geq 2$. Since the maps $R^{n-1}_{a',a'}$  and $R_{a',a'}R_{a',b'}$ are derivations of $T$ then
\begin{eqnarray*}
 R_{a',a'}^{n}R_{a',b'}([a,b,c]) &=& R_{a',a'}^{n-1}\left([R_{a',a'}R_{a',b'}(a),b,c] + [a,R_{a',a'}R_{a',b'}(b),c] \right.\\ && \quad \left. + [a,b,R_{a',a'}R_{a',b'}(c)]\right)\\
&\stackrel{\langle 1 \rangle}{=}& [R_{a',a'}^{n}R_{a',b'}(a),b,c] + [a,R_{a',a'}^{n}R_{a',b'}(b),c] \\ && \quad + [a,b,R_{a',a'}^{n}R_{a',b'}(c)]
\end{eqnarray*}
where $\langle 1 \rangle$ follows from part (2) in Lemma \ref{lem:skew}. 
\end{proof}

\subsection{$2$-generated free commutative automorphic Lie triple systems}
\label{subsec:free_calts}

Inside the commutative automorphic Lie triple system $\T$ freely generated by $\{ a,b\}$ we will consider
\begin{displaymath}
\sset := \spann\langle a,b\rangle \subseteq \T.
\end{displaymath}

\begin{lemma}
\label{lem:sT}
 We have:
\begin{enumerate}
\item  $[\sset,\sset,[\T,\sset,\sset]]  = 0$ and
\item $[\T,\sset,[\sset,\sset,\sset]] = 0$.
\end{enumerate}
\end{lemma}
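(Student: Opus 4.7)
The plan is to exploit Lemma~\ref{lem:skew}(1)---skew-symmetry of $[x, a', [y, a', z]]$ in $x, y, z$---together with the derivation property of the operators $R_{a', b'}$ given by the commutative automorphic axiom. Throughout, write $u = [a, b, a]$ and $v = [a, b, b]$, so that $[\sset, \sset, \sset] = \spann\{u, v\}$.

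For part (2), I would reduce to verifying that $R_{s, w}$ vanishes on $\T$ for each $s \in \{a, b\}$ and $w \in \{u, v\}$. In each of the four cases the Jacobi identity lets me rewrite the inner bracket in a form whose middle entry matches $s$: for instance $u = -[b, a, a]$, so that $[t, a, u] = -[t, a, [b, a, a]]$ fits the shape $[x, a', [y, a', z]]$ with $a' = a$, $(x, y, z) = (t, b, a)$. Lemma~\ref{lem:skew}(1) then finishes the case because the transposition $x \leftrightarrow z$ sends this expression to $[a, a, [b, a, t]]$, which vanishes by skew-symmetry of the first two entries.

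For part (1), by bilinearity and skew-symmetry in the first two slots I may assume the outer triple has the form $[a, b, [t, s_3, s_4]]$ with $s_3, s_4 \in \{a, b\}$. When $s_3 = b$ the inner middle matches the outer middle, so Lemma~\ref{lem:skew}(1) applied with $a' = b$, $x = a$, $y = t$, $z = s_4$ closes the case: either $x = z = a$ (when $s_4 = a$) or the swap $x \leftrightarrow z$ produces $[b, b, \cdot] = 0$ (when $s_4 = b$). When $s_3 = a$ the two middles mismatch and a direct approach fails; here I would apply the derivation $R_{a, s_4}$ to $[a, b, t]$ to obtain
\[
  [a, b, [t, a, s_4]] = R_{a, s_4}[a, b, t] + [a, w, t],
\]
where $w = [a, b, s_4] \in \{u, v\}$ and I have used $[a, a, s_4] = 0$ to kill one of the three derivation-expansion terms. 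To evaluate $[a, w, t] = -R_{a, t}(w)$ I apply the derivation $R_{a, t}$ to $w = [a, b, s_4]$; the term with $[a, a, t] = 0$ drops out, and the troublesome term $[a, b, [s_4, a, t]]$ is zero either trivially (when $s_4 = a$) or as an instance of the key identity $[a, b, [a, b, t]] = 0$ from Lemma~\ref{lem:skew}(1) with a repeated entry (when $s_4 = b$). The remaining term reduces to $R_{a, s_4}[a, b, t]$, so that $[a, w, t] = -R_{a, s_4}[a, b, t]$ and the two contributions cancel.

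The main obstacle is the asymmetry when the inner middle $a$ differs from the outer middle $b$: Lemma~\ref{lem:skew}(1) alone is insufficient, and one must combine two applications of the CA derivation property together with the vanishing $[a, b, [a, b, t]] = 0$ (itself an instance of Lemma~\ref{lem:skew}(1)) to discard the offending cross-term. Once this is handled, all four cases of part (1) fall into place and part (2) is a direct corollary of Lemma~\ref{lem:skew}(1).
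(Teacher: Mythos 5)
Your proof is correct, and its engine is the same as the paper's: reduce everything to instances of the skew-symmetry of $[x,a',[y,a',z]]$ in $x,y,z$ from Lemma~\ref{lem:skew}(1). The one place you genuinely diverge is the ``mismatched middle'' subcase of part (1), where you assert that a direct approach fails and instead run a two-step derivation argument (apply $R_{a,s_4}$ to $[a,b,t]$, then $R_{a,t}$ to $w$, and cancel). I checked that argument --- the identities $[a,b,[t,a,s_4]] = R_{a,s_4}[a,b,t] + [a,w,t]$ and $[a,w,t] = -R_{a,s_4}[a,b,t]$ do hold and the cancellation is valid --- but its premise is false: the middles can always be matched by first flipping the outer bracket with the antisymmetry axiom, $[a,b,[t,a,s_4]] = -[b,a,[t,a,s_4]]$, after which Lemma~\ref{lem:skew}(1) with $a'=a$ and $(x,y,z)=(b,t,s_4)$ kills both subcases (the expression has $x=z$ when $s_4=b$; swapping $y \leftrightarrow z$ creates $[a,a,t]=0$ when $s_4=a$). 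This is in effect what the paper does, so the entire lemma is a direct corollary of Lemma~\ref{lem:skew}(1), with no appeal to the derivation axiom needed; your detour buys nothing except length, though it does no harm. Two smaller remarks: your part (2) is self-contained, whereas the paper deduces it from part (1) by using Lemma~\ref{lem:skew}(1) to swap the general element $t$ from the outer first slot into the inner bracket, landing in $[\sset,\sset,[\T,\sset,\sset]]=0$ --- both routes are fine; and the rewriting $u=-[b,a,a]$ that you attribute to the Jacobi identity is just the antisymmetry $[a,b,c]=-[b,a,c]$.
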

\begin{proof}
If  $[\sset,\sset,[\T,\sset,\sset]] \neq 0$ then $[a,b,[\T,\sset,\sset]] \neq 0$ and without loss  of generality we can assume $[a,b,[\T,b,\sset]] \neq 0$. Hence $0 \neq [\sset,b,[\T,b,a]]= [a,b,[\T,b,a]] = 0$ by Lemma \ref{lem:skew} part (1), a contradiction.  If $[\T,\sset,[\sset,\sset,\sset]] \neq 0$ then without loss of generality we can assume $[\T,a,[\sset,a,\sset]] \neq 0$ and therefore $0 \neq [\sset,a,[\T,a,\sset]] = 0$ by part (1) and Lemma~\ref{lem:skew} part (1), a contradiction.
\end{proof}

\begin{lemma}
\label{lem:symmetry}
For any $a_1,\dots, a_{n} \in \sset$ with $n \geq 0$ and any  permutation $\sigma$ of $\{1,\dots, n\}$ we have
\begin{equation}
\label{eq:symmetry}
[a,b,a_1,a_2,\dots,a_{n}] = [a,b,a_{\sigma(1)},a_{\sigma(2)},\dots,a_{\sigma(n)}].
\end{equation}
\end{lemma}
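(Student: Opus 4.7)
The plan is to induct on $n$. Even $n$ is vacuous since both sides are zero by the definition of the bracket, and $n=1$ is trivial. For odd $n\geq 3$, setting $X:=[a,b,a_1,\dots,a_{n-2}]$ so that the left-hand side equals $[X,a_{n-1},a_n]$, the inductive hypothesis already yields the symmetry $S_{\{1,\dots,n-2\}}$ acting on $(a_1,\dots,a_{n-2})$. I would obtain the remaining symmetries from two further relations: the transposition $(n-1\ n)$ and a double transposition $(n-3\ n-1)(n-2\ n)$ that crosses the boundary between the first $n-2$ and the last $2$ slots.

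\emph{Transposition $(n-1\ n)$.} The skew-symmetry axiom combined with the Jacobi identity gives $[X,a_{n-1},a_n]-[X,a_n,a_{n-1}]=-[a_{n-1},a_n,X]$, so it suffices to check $[a_{n-1},a_n,X]=0$. Inspecting the outermost bracket shows $X\in[\T,\sset,\sset]$: directly for $n=3$, and for $n\geq 5$ via $X=[X_1,a_{n-3},a_{n-2}]$ with $X_1:=[a,b,a_1,\dots,a_{n-4}]\in\T$. Lemma~\ref{lem:sT}(1) then applies.

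\emph{Pair swap, for $n\geq 5$.} I would apply the commutative automorphic identity to $[[X_1,a_{n-3},a_{n-2}],a_{n-1},a_n]$, obtaining three summands. The third, $[X_1,a_{n-3},[a_{n-2},a_{n-1},a_n]]$, lies in $[\T,\sset,[\sset,\sset,\sset]]$ and vanishes by Lemma~\ref{lem:sT}(2). The middle summand is the crux, and I would dispatch it via the auxiliary claim
\begin{equation*}
[U,M,v]=0 \quad \text{whenever } U\in[\T,\sset,\sset],\ M\in[\sset,\sset,\sset],\ v\in\sset.
\end{equation*}
Its proof uses Jacobi to write $[U,M,v]=-[M,v,U]-[v,U,M]$: skew-symmetry turns $[v,U,M]$ into $-[U,v,M]$, which is zero by Lemma~\ref{lem:sT}(2); and expanding $M=[p,q,r]$ with $p,q,r\in\sset$ via the commutative automorphic identity rewrites $[M,v,U]$ as a sum of three terms, each containing a factor $[p',v,U]=-[v,p',U]$ with $p'\in\sset$, which vanishes by Lemma~\ref{lem:sT}(1). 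Applying the claim with $U=X_1\in[\T,\sset,\sset]$, $M=[a_{n-3},a_{n-1},a_n]$, $v=a_{n-2}$ eliminates the middle term and leaves
\begin{equation*}
[a,b,a_1,\dots,a_n]=[a,b,a_1,\dots,a_{n-4},a_{n-1},a_n,a_{n-3},a_{n-2}],
\end{equation*}
which realises the double transposition $\pi=(n-3\ n-1)(n-2\ n)$.

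\emph{Generation of $S_n$ and the base case $n=3$.} A routine group-theoretic verification shows that $\langle S_{\{1,\dots,n-2\}},\,(n-1\ n),\,\pi\rangle=S_n$: conjugating $\pi$ by $S_{n-2}$ produces all double transpositions $(i\ n-1)(j\ n)$ with distinct $i,j\leq n-2$, products of such elements give $3$-cycles crossing the boundary, and composing with elements of $S_{\{1,\dots,n-2\}}$ and $(n-1\ n)$ recovers every transposition of $S_n$. For the excluded case $n=3$, where no $X_1$ is available, I would instead compute $[[a,b,a_1],a_2,a_3]$ directly via the commutative automorphic identity; the summand $[a,b,[a_1,a_2,a_3]]$ vanishes by Lemma~\ref{lem:sT}(2), and writing $a_2=\alpha a+\beta b$ and exploiting $[x,x,y]=0$ gives $[a,a_2,a_3]=\beta[a,b,a_3]$, $[b,a_2,a_3]=-\alpha[a,b,a_3]$, whence the remaining two summands combine to $[[a,b,a_3],a_2,a_1]$, yielding the transposition $(1\ 3)$. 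The hard step is the auxiliary claim: it is the only place where both parts of Lemma~\ref{lem:sT} must cooperate, and once it is in hand the rest of the induction is bookkeeping of Jacobi and CA expansions.
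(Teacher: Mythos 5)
Your argument is correct; I checked the auxiliary claim, the $n=3$ computation, and the group-generation count, and they all hold. The overall strategy is the same as the paper's — induct, reduce (\ref{eq:symmetry}) to a generating set of permutations, and obtain each generator from a derivation identity combined with the vanishing statements of Lemma~\ref{lem:sT} — and your treatment of the transposition $(n-1\ n)$ is literally the paper's proof of (\ref{eq:symmetry1}). You diverge on the second generator. The paper expands $[x,a_{n-3},[a_{n-2},a_{n-1},a_n]]$ using the second Lie triple system axiom (that $[x,a_{n-3},\cdot\,]$ is a derivation); after Lemma~\ref{lem:sT} kills two of the three resulting terms, the survivor is exactly the adjacent transposition $(n-2\ n-1)$, so no further lemma is needed and $S_n$ is generated by adjacent transpositions alone. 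You instead expand via the commutative automorphic axiom (that $R_{a_{n-1},a_n}$ is a derivation), which yields the double transposition $(n-3\ n-1)(n-2\ n)$ but leaves the middle term $[X_1,[a_{n-3},a_{n-1},a_n],a_{n-2}]$ outside the reach of Lemma~\ref{lem:sT}; your auxiliary claim $[U,M,v]=0$ for $U\in[\T,\sset,\sset]$, $M\in[\sset,\sset,\sset]$, $v\in\sset$ (proved correctly from Jacobi plus both parts of Lemma~\ref{lem:sT}) is the price of that choice, along with a slightly longer generation argument. One point in your favour: you supply an explicit second generator at $n=3$ (the transposition $(1\ 3)$ via $a_2=\alpha a+\beta b$), whereas the paper's proof of (\ref{eq:symmetry2}) simply assumes $n\geq 5$ and leaves the base case — which the induction genuinely needs, since $(2\ 3)$ alone does not generate $S_3$ — implicit.
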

\begin{proof}
If $n$ is even then both sides of (\ref{eq:symmetry}) vanish so we can assume in the following that $n$ is odd and $n \geq 3$. We only have to prove
\begin{equation}
\label{eq:symmetry1} [a,b,a_1,\dots,a_{n-1},a_n] =  [a,b,a_1,\dots, a_n,a_{n-1}]
\end{equation}
and
\begin{equation}
\label{eq:symmetry2} [a,b,a_1,\dots,a_{n-2},a_{n-1},a_n]  = [a,b,a_1,\dots, a_{n-1},a_{n-2},a_n].
\end{equation}
On the one hand, by Jacobi identity
\begin{displaymath}
[a,b,a_1,\dots,a_{n-1},a_n] - [a,b,a_1,\dots, a_n,a_{n-1}] = - [a_{n-1},a_n,[a,b,a_1,\dots, a_{n-2}]] 
\end{displaymath}
belongs either to $[\sset,\sset,[\T,\sset,\sset]] = 0$ if $n \geq 5$ or to $[\sset,\sset,[\sset,\sset,\sset]] = 0$ if $n = 3$. This proves (\ref{eq:symmetry1}). On the other hand, to prove (\ref{eq:symmetry2}) we can assume that $n \geq 5$. For short we also set $x:= [a,b,a_1,\dots,a_{n-4}]$. Now, 
\begin{eqnarray*}
[a,b,a_1,\dots,a_{n-2},a_{n-1},a_n]  &=& [x,a_{n-3},[a_{n-2},a_{n-1},a_n]] \\
&& \quad - [a_{n-2},[x,a_{n-3},a_{n-1}],a_n] \\
&& \quad - [a_{n-2}, a_{n-1},[x,a_{n-3},a_n]]\\
&=& 0 - [a_{n-2},[x,a_{n-3},a_{n-1}],a_n] -0 \\
&=& [a,b,a_1,\dots,a_{n-1},a_{n-2},a_n]
\end{eqnarray*}
since 
\begin{displaymath}
 [x,a_{n-3},[a_{n-2},a_{n-1},a_n]] \in [\T,\sset,[\sset,\sset,\sset]] = 0
\end{displaymath}
 and 
\begin{displaymath}
[a_{n-2}, a_{n-1},[x,a_{n-3},a_n]] \in [\sset,\sset,[\T,\sset,\sset]] = 0.
\end{displaymath}
\end{proof}

\begin{lemma}
$\T$ admits a gradation $\T = \oplus_{n=1}^\infty \T_n$ where $\T_n := [\sset,\sset,\dots, \sset]$ ($\sset$ occurs $n$ times).
\end{lemma}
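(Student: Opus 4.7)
The statement splits into two parts: (a) directness of the sum $\sum_{n\ge 1}\T_n$, and (b) the identification of this sum with all of $\T$ together with the compatibility $[\T_i,\T_j,\T_k]\subseteq\T_{i+j+k}$ that makes it a gradation of the triple system.

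For (a), note that every identity defining a commutative automorphic Lie triple system --- skew-symmetry in the first two slots, the Jacobi identity, the triple-derivation axiom, and the commutative-automorphic identity --- is multi-homogeneous in its variables. Consequently the free system $\T$ on $\{a,b\}$ carries a natural $\mathbb{N}^2$-grading by $(\deg_a,\deg_b)$, and in particular an $\mathbb{N}$-grading by total degree in the generators. Since a left-nested product $[a_1,\dots,a_n]$ with $a_i\in\sset$ is manifestly homogeneous of total degree $n$, the subspaces $\T_n$ lie in pairwise distinct homogeneous components and the sum is automatically direct. The grading property $[\T_i,\T_j,\T_k]\subseteq\T_{i+j+k}$ will then follow from (b), since the triple bracket preserves total degree.

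For (b), put $S:=\sum_{n\ge 1}\T_n$. Because $\sset=\T_1\subseteq S$ and $\sset$ generates $\T$ as a Lie triple system, it suffices to show that $S$ is closed under the triple bracket. By trilinearity I may assume the three entries are themselves left-nested: $X=[a_1,\dots,a_p]$, $Y=[b_1,\dots,b_q]$, $Z=[c_1,\dots,c_r]$ with $a_s,b_s,c_s\in\sset$. If $q=r=1$ then $[X,Y,Z]=[a_1,\dots,a_p,Y,Z]\in\T_{p+2}$ and we are done. In the remaining cases I peel the outermost bracket off the third entry via the derivation axiom
\[
[X,Y,[C',c_{r-1},c_r]] = [[X,Y,C'],c_{r-1},c_r] + [C',[X,Y,c_{r-1}],c_r] + [C',c_{r-1},[X,Y,c_r]],
\]
writing $Z=[C',c_{r-1},c_r]$ with $C'=[c_1,\dots,c_{r-2}]$. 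The three inner triples on the right have strictly smaller total length than the outer triple, and an induction on total length reduces each of them to an element of $S$ (i.e., a sum of left-nested products); skew-symmetry $[U,V,W]=-[V,U,W]$ is then used to swap any complicated expression that ends up in the second slot back into the first, and the Jacobi identity rotates a complex third entry into first position when needed. Iterating this, one reduces every $[X,Y,Z]$ to a linear combination of left-nested products and hence to an element of $S$. This is the standard ``reduction to left-nested form'' argument for Lie triple systems generated by a set.

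\textbf{Main obstacle.} Choosing an induction measure that terminates. The outer total length is preserved by the derivation-axiom rewrite above, so it cannot serve as the measure for the outer triple; the rescue is that the three \emph{interior} triples produced by the rewrite all have strictly smaller total length, so a strong induction on total length (combined with skew-symmetry and Jacobi to iron out arrangements with ``complex second entry'' or ``complex third entry'' once the first slot has been fed a left-nested product) does close. In this argument the commutative-automorphic hypothesis plays no essential role in the spanning statement itself; its effect is visible only through Lemmas \ref{lem:skew} and \ref{lem:symmetry}, which collapse many of the left-nested products produced by the reduction to ones of the simple shape $[a,b,a_1,\dots,a_{n-2}]$ used in the sequel.
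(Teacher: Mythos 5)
Your proof follows the same route as the paper: gradedness comes from viewing $\T$ as the quotient of the free Lie triple system on $\{a,b\}$ by the homogeneous ideal generated by the (multi-homogeneous) defining identities, and the spanning of each homogeneous component by left-nested products is exactly the standard rewriting fact that the paper invokes in a single sentence (``the axioms of Lie triple system allow us to write any product of $n$ elements of $\sset$ as a linear combination of left-nested products of permuted arguments''). Your write-up actually supplies more of the mechanism --- the derivation-axiom peeling of the third slot plus skew-symmetry and Jacobi, together with an honest discussion of the induction measure --- than the paper's own proof does.
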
 
\begin{proof}
	$\T$ is the quotient of the free triple system generated by $a$ and $b$--which has a grading by setting the degree of $a$ and $b$ to be $1$--by an homogeneous ideal; thus $\T$ is graded and $\T_n$ consists of elements of degree $n$. If fact, the axioms of Lie triple system allow us to write any product of $n$ elements $a_1,\dots, a_n \in \sset$ as a linear combination $\sum \alpha_\sigma [a_{\sigma(1)},\dots, a_{\sigma(n)}]$ where $\sigma$ runs on the symmetric group of degree $n$.
\end{proof}

\begin{theorem}
\label{thm:basis}
The set $B : = \{ a, b, [a,b,\stackrel{i}{\dots}a,\stackrel{j}{\dots} b] \mid i, j \geq 0, i + j \text{ odd}\, \}$ is a basis of the vector space $\T$.
\end{theorem}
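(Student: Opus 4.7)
My plan is to prove spanning and linear independence separately.

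\emph{Spanning.} Work degree-by-degree in the grading $\T = \bigoplus_n \T_n$. Here $\T_n = 0$ for even $n$ (iterated brackets of even length vanish by convention), and $\T_1 = \sset$ is spanned by $\{a, b\}$ since $\T$ is free on these generators. For odd $n \geq 3$, every element of $\T_n$ is a linear combination of iterated brackets $[a_1, \dots, a_n]$ with $a_i \in \{a, b\}$. Skew-symmetry in the first two slots forces $\{a_1, a_2\} = \{a, b\}$, and the identity $[b, a, \cdot] = -[a, b, \cdot]$ normalizes $(a_1, a_2) = (a, b)$. Lemma~\ref{lem:symmetry} then allows any rearrangement of $a_3, \dots, a_n$, so collecting the $a$'s and $b$'s produces $[a, b, \stackrel{i}{\dots}a, \stackrel{j}{\dots}b]$ with $i + j = n - 2$ odd. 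Hence $B$ spans $\T$.

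\emph{Linear independence.}  I would prove this by constructing an explicit commutative automorphic Lie triple system $V$ whose basis is visibly in bijection with $B$, and then pulling back via the universal property of $\T$.  Set
\begin{displaymath}
V := \field x \oplus \field y \oplus \bigoplus_{i, j \geq 0,\, i + j \text{ odd}} \field\, \gamma_{i, j}
\end{displaymath}
and define the triple product on basis elements by $[x, y, x] = \gamma_{1, 0}$, $[x, y, y] = \gamma_{0, 1}$, $[\gamma_{i, j}, x, x] = \gamma_{i + 2, j}$, $[\gamma_{i, j}, y, y] = \gamma_{i, j + 2}$, $[\gamma_{i, j}, x, y] = [\gamma_{i, j}, y, x] = \gamma_{i + 1, j + 1}$, with all remaining products zero (in particular $[x, y, \gamma_{i, j}] = 0$, $[\gamma_{i, j}, u, \gamma_{i', j'}] = 0$ whenever $u \in \spann\{x, y\}$, and $[\gamma_{i, j}, \gamma_{i', j'}, w] = 0$ for all $w \in V$), then extend by bilinearity and skew-symmetry in the first two arguments.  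Once $V$ is shown to be a commutative automorphic Lie triple system, the freeness of $\T$ supplies a homomorphism $\varphi: \T \to V$ with $\varphi(a) = x$ and $\varphi(b) = y$; a short induction on $i + j$ using the iterative definition of the bracket gives $\varphi([a, b, \stackrel{i}{\dots}a, \stackrel{j}{\dots}b]) = \gamma_{i, j}$.  Any nontrivial relation among the elements of $B$ in $\T$ would therefore project to a nontrivial relation among basis elements of $V$, contradicting the linear independence built into $V$.

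\emph{Main obstacle.} The bulk of the work is verifying that $V$ is indeed a commutative automorphic Lie triple system. Skew-symmetry in the first two slots is built in, and Jacobi follows from a short case analysis (each cyclic triple of brackets either vanishes by one of the zero rules or cancels in pairs). The commutative automorphic axiom amounts to showing that $R_{a', b'}$ is a derivation of $V$ for every $(a', b')$; by bilinearity it suffices to check basis pairs. The operators $R_{x, \gamma_{i, j}}, R_{y, \gamma_{i, j}}, R_{\gamma_{i, j}, \gamma_{i', j'}}$ vanish identically on $V$ by the defining rules, so those cases are trivial. The remaining six cases $R_{x, x}, R_{x, y}, R_{y, x}, R_{y, y}, R_{\gamma_{i, j}, x}, R_{\gamma_{i, j}, y}$ each require a brief but deliberate calculation, in which the derivation terms either hit one of the designated nonzero rules or cancel in pairs using $[x, y, \gamma] = 0$ and $[\gamma, u, \gamma'] = 0$.
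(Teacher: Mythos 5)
Your argument is correct and follows the same two-step skeleton as the paper: spanning from the grading $\T=\oplus_n\T_n$ plus Lemma~\ref{lem:symmetry}, and linear independence by exhibiting a concrete commutative automorphic Lie triple system that receives $\T$ via the universal property and in which the images of the elements of $B$ are visibly independent. The difference is in how that separating model is realized. The paper builds it inside an associative algebra $\mathbb{A}$ (free on $\{a,b\}$ modulo $aab=aba$ and $bba=bab$) with the double commutator $[[x,y],z]$, so that skew-symmetry, the Jacobi identity and the Lie-triple derivation axiom are automatic, and only closure of the subspace spanned by $\{a,b,aba^ib^j\}$ and the commutative automorphic identity remain to be checked (the paper leaves even that to the reader); the cost is that one must know a normal form for the quotient $\mathbb{A}$. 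You instead define the target $V$ by explicit structure constants, which makes the epimorphism $\varphi\colon\T\to V$ and the induction giving $\varphi([a,b,\stackrel{i}{\dots}a,\stackrel{j}{\dots}b])=\gamma_{i,j}$ completely transparent, at the price of verifying Jacobi and the derivation property of every $R_{a',b'}$ by hand. I checked a representative sample of those verifications ($R_{x,x}$, $R_{x,y}$, $R_{y,x}$ and $R_{\gamma_{i,j},x}$, both on the designated nonzero brackets and on triples whose bracket vanishes, such as $(x,y,\gamma_{k,l})$) and the cancellations do close up, relying exactly on the rules $[x,y,\gamma_{i,j}]=0$ and $[\gamma,u,\gamma']=0$; these rules mirror Lemma~\ref{lem:sT} and the relation $c\mathbb{A}c=0$ in the paper's model, so your $V$ is essentially the paper's $T$ written out in structure constants via $\gamma_{i,j}\leftrightarrow aba^ib^j$. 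The one point to make explicit in a final write-up is that the derivation identity must be verified not only on the six nonzero products but also on every basis triple whose bracket is zero, since the right-hand side is not obviously zero there; as your own phrase about terms ``cancelling in pairs'' suggests, those cases do work out, but they are part of the case analysis, not a triviality.
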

\begin{proof}
Since $\T = \oplus_{n=1}^\infty \T_n$, Lemma \ref{lem:symmetry} implies that $B$ spans $\T$. The linear independence of $B$ will follow from another description of $\T$. Consider the non-unital associative algebra $\mathbb{A}$ freely generated by $\{a,b\}$ and subject to the following relations
\begin{displaymath}
aab = aba \quad \text{and} \quad bba = bab.
\end{displaymath}
$\mathbb{A}$ has a linear basis $\{a, b, aba^ib^j \mid i,j \geq 0  \}$. Moreover, $\mathbb{A}$ is a Lie algebra with the commutator product and a Lie triple system with the double commutator $[x,y,z] : =[[x,y],z]$. The subspace $T$ spanned by $\{a,b,aba^ib^j \mid i,j \geq 0, i+j \text{ odd}\,\}$ is closed under this triple product and it is a commutative automorphic Lie triple system (we leave the proof to the reader). Therefore, there exists an epimorphism of Lie triple systems $\T \rightarrow T$ induced by $a \mapsto a$ and $b\mapsto b$. Since $[a,b,\stackrel{i}{\dots}a,\stackrel{j}{\dots}b]$ is mapped to $aba^ib^j$ then $B$ has to be linearly independent, and $\T$ is isomorphic to $T$.
\end{proof}

\begin{corollary}
\label{cor:zero}
If $\{a_1,a_2,a_3\} \subset \T$ contains at least two  elements in $[\T,\T,\T]$ then $[a_1,a_2,a_3] = 0$.
\end{corollary}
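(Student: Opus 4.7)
My plan is first to reduce the three configurations in which two of $a_1, a_2, a_3$ lie in $[\T,\T,\T]$ to a single case, and then attack that case by induction. By the gradation $\T = \bigoplus_n \T_n$ together with Theorem \ref{thm:basis}, we have $[\T,\T,\T] = \bigoplus_{n \geq 3} \T_n$, so every element of $[\T,\T,\T]$ is a sum of homogeneous terms of odd degree at least $3$. If the $\T$-argument of $[a_1, a_2, a_3]$ sits in the second slot, skew-symmetry in the first two slots converts the bracket to the form $[z, u, v]$ with $z \in \T$ and $u, v \in [\T,\T,\T]$; if the $\T$-argument sits in the third slot, the Jacobi identity rewrites the bracket as a sum of two brackets of the forms just addressed. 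So the corollary reduces to proving $[z, u, v] = 0$ for every $z \in \T$ and every $u, v \in [\T,\T,\T]$.

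The commutative automorphic axiom makes $R_{u, v} \colon x \mapsto [x, u, v]$ a derivation of $\T$, and $\T$ is generated by $\sset = \spann\{a,b\}$, so it suffices to verify $R_{u,v}(a) = 0 = R_{u,v}(b)$. I plan to prove this by induction on $\deg v$, taking $v$ homogeneous. The base case $\deg v = 3$ is immediate: by skew-symmetry, $[a, u, v] = -[u, a, v]$, and the right-hand side vanishes by Lemma \ref{lem:sT}(2), since $a \in \sset$ and $v \in \T_3$. For the inductive step $\deg v \geq 5$, the bracketed notation lets me write $v = [v', s, t]$ with $s, t \in \sset$ and $v' \in \T_{\deg v - 2} \subseteq [\T,\T,\T]$. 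Expanding $R_{s, t}([a, u, v'])$ via the derivation property of $R_{s, t}$ and rearranging yields
\begin{displaymath}
[a, u, v] = R_{s, t}([a, u, v']) - [[a, s, t], u, v'] - [a, [u, s, t], v'].
\end{displaymath}
Each summand on the right falls under the inductive hypothesis, since $\deg v' < \deg v$, $[a, s, t] \in \T_3 \subseteq [\T,\T,\T]$, and $[u, s, t] \in \T_{\deg u + 2} \subseteq [\T,\T,\T]$, while the inductive statement is quantified over all first arguments in $\T$. The same computation with $b$ in place of $a$ completes the step.

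The main obstacle is a subtle choice of the inductive parameter: the natural guess of inducting on $\deg u + \deg v$ fails because the summand $[a, [u, s, t], v']$ preserves that total (it trades two degrees from $v$ to $u$). Inducting on $\deg v$ alone removes the obstruction, and succeeds precisely because the derivation property of $R_{u, v}$ lets the first argument range freely in $\T$ without any control on $\deg u$ being required.
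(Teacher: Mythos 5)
Your proof is correct, but it takes a genuinely different route from the paper. The paper's proof is a two-line computation in the concrete associative model $\mathbb{A}$ built for Theorem~\ref{thm:basis}: there $[T,T,T]\subseteq c\mathbb{A}$ with $c=ab-ba$, and the defining relations give $ac=0=bc$, hence $c\mathbb{A}c=0=cc$, so any double commutator with two factors in $c\mathbb{A}$ vanishes. You instead argue intrinsically: skew-symmetry and Jacobi reduce everything to $[z,u,v]$ with $u,v\in[\T,\T,\T]$; the derivation property of $R_{u,v}$ reduces the first argument to the generators $a,b$; and an induction on $\deg v$ (with base case supplied by Lemma~\ref{lem:sT}(2) and the step by expanding $R_{s,t}([a,u,v'])$) finishes it. Your choice of inductive parameter is sound — the induction is well-founded because at each degree the derivation argument upgrades the statement from first argument in $\{a,b\}$ to first argument in all of $\T$ before the next step needs it — and all three residual terms in the step do fall under the hypothesis. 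What each approach buys: the paper's is much shorter but leans on the model $T\subseteq\mathbb{A}$, whose verification as a commutative automorphic Lie triple system is explicitly left to the reader; yours stays entirely inside $\T$, uses only the axioms, Lemma~\ref{lem:sT} and the gradation/spanning part of Theorem~\ref{thm:basis} (not the linear independence), at the cost of a longer argument. Both are valid given what precedes the corollary in the paper.
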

\begin{proof}
We will use the isomorphism $\T \cong T$ found in the proof of Theorem~\ref{thm:basis} since it is a very attractive way of working with $\T$. The relations that define $\mathbb{A}$ are $ac = 0 = bc$ with $c:= ab - ba$. From this it is clear that $cc = c
\mathbb{A}c = 0$. Since $[T,T,T] \subseteq c \mathbb{A}$ then it is easy to conclude that $[a_1,a_2,a_3] = 0$ if two factors belong to $[T,T,T]$.
\end{proof}

\begin{corollary}
\label{cor:solvable}
We have
\begin{displaymath}
[\T,\T,[\T,\T,\T]] = 0.
\end{displaymath}
\end{corollary}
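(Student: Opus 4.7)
The plan is to combine the direct-sum decomposition $\T = \sset \oplus [\T,\T,\T]$ with Corollary~\ref{cor:zero} to cut the problem down to showing that $[\sset,\sset,[\T,\T,\T]] = 0$, and then to finish via Lemma~\ref{lem:sT}. The decomposition itself is immediate from Theorem~\ref{thm:basis}: $\sset = \spann\langle a,b\rangle$ sits inside the degree-$1$ component $\T_1$, while $[\T,\T,\T] \subseteq \bigoplus_{n \geq 3}\T_n$, and the basis $B$ of $\T$ is the disjoint union of $\{a,b\}$ with the elements of $B$ lying in $[\T,\T,\T]$.

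Given this, for $x, y \in \T$ and $z \in [\T,\T,\T]$, I would write $x = x_s + x_t$ and $y = y_s + y_t$ with $x_s, y_s \in \sset$ and $x_t, y_t \in [\T,\T,\T]$, and expand $[x,y,z]$ trilinearly. Three of the four resulting terms involve at least two entries lying in $[\T,\T,\T]$, so they vanish by Corollary~\ref{cor:zero}. The remaining term $[x_s,y_s,z]$ reduces, by bilinearity together with $[a,a,z]=0=[b,b,z]$ and $[b,a,z] = -[a,b,z]$, to checking that $[a,b,z]=0$ for every basis element $z = [a,b,\stackrel{i}{\dots}a,\stackrel{j}{\dots}b]$ of $[\T,\T,\T]$, i.e.\ for $i,j \geq 0$ with $i+j$ odd (so automatically $i+j \geq 1$).

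Next I would split into two subcases. If $i+j = 1$ then $z \in [\sset,\sset,\sset]$, and Lemma~\ref{lem:sT}(2) gives $[a,b,z] \in [\T,\sset,[\sset,\sset,\sset]] = 0$. If $i+j \geq 3$, unwinding the left-nested bracket from the definition writes $z = [w, a_{n-1}, a_n]$ with $w \in \T$ and $a_{n-1}, a_n \in \{a,b\} \subset \sset$, so $z \in [\T,\sset,\sset]$, and Lemma~\ref{lem:sT}(1) gives $[a,b,z] \in [\sset,\sset,[\T,\sset,\sset]] = 0$.

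The main thing to notice is that Corollary~\ref{cor:zero} on its own is not sufficient: a generic $[x,y,z]$ with only $z$ known to lie in $[\T,\T,\T]$ has only one of its three slots in $[\T,\T,\T]$. The decomposition $\T = \sset \oplus [\T,\T,\T]$ is precisely what upgrades three of the four expanded terms to a configuration where Corollary~\ref{cor:zero} applies, and the explicit $\{a,b,[a,b,\dots]\}$-basis of $[\T,\T,\T]$ is then what allows the remaining $z$ to be recognized as lying in either $[\sset,\sset,\sset]$ or $[\T,\sset,\sset]$, triggering the appropriate clause of Lemma~\ref{lem:sT}.
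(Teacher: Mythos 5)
Your proof is correct and follows essentially the same route as the paper's: reduce the outer two slots to $\sset$ via the decomposition $\T = \sset \oplus [\T,\T,\T]$ together with Corollary~\ref{cor:zero}, then recognize the inner factor as lying in $[\T,\sset,\sset]$ and invoke Lemma~\ref{lem:sT}. The only cosmetic difference is that the paper compresses your two subcases into the single chain $[\T,\T,[\T,\T,\T]] = [\sset,\sset,[\T,\T,\T]] \subseteq [\sset,\sset,[\T,\sset,\sset]] = 0$, using only part~(1) of Lemma~\ref{lem:sT} (your $i+j=1$ case is already covered there since $[\sset,\sset,\sset] \subseteq [\T,\sset,\sset]$).
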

\begin{proof}
By Corollary \ref{cor:zero},  Jacobi identity and part (1) of Lemma \ref{lem:sT} we obtain $[\T,\T,[\T,\T,\T]] = [\sset,\sset,[\T,\T,\T]] \subseteq [\sset,\sset,[\T,\sset,\sset]]  = 0.$
\end{proof} 

\section{Formal integration of commutative automorphic Lie triple systems}
\label{sec:fi}
This section is the main section of the paper. Given a commutative automorphic Lie triple system $(T,[a,b,c])$ we will construct a commutative automorphic formal loop $(\field[T],\dot{F})$ so that in $\field[\dot{F}]$  
\begin{displaymath}
a \cdot( b \cdot c) - b \cdot (a \cdot c) = [a,b,c]
\end{displaymath}
holds for all $a,b,c \in T$. We will follow three natural steps:
\begin{enumerate}
\item construct  a  left Bruck formal loop $(\field[T],F)$,
\item construct the maps $\phi_x$, $x \in \field[T]$ and 
\item define $x \cdot y : = x\1 \phi_{x\2}(y)$,
\end{enumerate}
which basically amounts to retrace our steps and prove many properties we have mentioned in Section \ref{subsec:CAformalloos} until  we can claim that there exists a commutative automorphic Hopf algebra which is responsible for them. The commutative automorphic formal loop which integrates $(T,[a,b,c])$ will be the formal loop $(\field[T], \dot{F})$ with $\dot{F}(x,y) := \pi_T(x \cdot y)$. This will conclude the proof of Lie's correspondence between commutative automorphic formal loops and commutative automorphic Lie triple systems. 

\subsection{The left Bruck formal loop $U(T)$.}
As discussed in Section~\ref{subsec:CAformalloos}, any Lie triple system $(T,[a,b,c])$ has a universal enveloping algebra $U(T)$ which is a connected left Bruck Hopf algebra with $T = \Prim(U(T))$ and such that $[a,b,c] = a(bc) - b(ac)$  for every $a,b,c \in T$. As a coalgebra $U(T)$ is isomorphic to  $(\field[T], \Delta, \epsilon)$. After this identification, 
\begin{displaymath}
F(x,y) := \pi_T(xy)
\end{displaymath}
defines a left Bruck  formal loop.
\subsection{Construction of $\phi_x$.}
Let $(T,[a,b,c])$ be a commutative automorphic Lie triple system. By Proposition \ref{prop:phi} we are forced to define
\begin{eqnarray*}
\phi_x \colon T & \rightarrow & T \\
a & \mapsto & S(x\1)(a x\2)
\end{eqnarray*}
for all $x \in U(T)$. Notice that $\Delta(\phi_x(a)) = S(x\1)(a x\2) \otimes S(x\3) x\4 + S(x\1) x\2 \otimes S(x\3)(a x\4) = \phi_x(a) \otimes 1 + 1 \otimes \phi_x(a)$, i.e. $\phi_x(a) \in \Prim(U(T)) = T$ and $\phi_x$ is well defined. We can extend $\phi_x$ to 
\begin{displaymath}
\phi_x \colon \field\{ T \}^{\#} \rightarrow \field\{ T \}^{\#}
\end{displaymath}
by imposing
\begin{displaymath}
\phi_x(1) := \epsilon(x) 1 \quad \text{and} \quad \phi_x(uv) := \phi_{x\1}(u)\phi_{x\2}(v)
\end{displaymath}
for any $u, v \in \field\{T\}^{\#}$. We will prove that $\phi_x$ induces a map on the quotient $U(T)$ of $\field\{T\}^{\#}$ (see Section \ref{subsec:CAformalloos}).  However, the reader should pay attention since at this point the commutative automorphic property of $T$ is required. Observe that for any $a, a' \in T$, in $U(T)$ we will have
\begin{eqnarray*}
\phi_{a'}(a) &=& S(a')a+aa' = -a'a+aa' = 0 \text{ and }\\
\phi_{a'a'}(a) &=& (a'a')a -2a'(aa') +a(a'a') = a(a'a')-a'(aa') = [a,a',a'].
\end{eqnarray*}
Since $[a,b,c] = a(bc) - b(ac)$, then $\phi_x([a,b,c]) = [\phi_{x\1}(a),\phi_{x\2}(b),\phi_{x\3}(c)]$. With $x := a'a'$ ($a' \in T$) this equality gives 
\begin{displaymath}
[[a,b,c],a',a'] = [[a,a',a'],b,c] + [a,[b,a',a'],c] + [a,b,[c,a',a']],
\end{displaymath} i.e. $a \mapsto [a,a',a']$ is a derivation of $T$, but this is not true for general Lie triple systems. Thus, the commutative automorphic property is required.

\begin{lemma}
\label{lem:phi}
Let $(T,[a,b,c])$ be a commutative automorphic Lie triple system. For any $a' ,a \in T$ and any $m \geq 0$ we have
\begin{displaymath}
\phi_{a'^{m}}(a) = [a,\stackrel{m}{\dots}a'].
\end{displaymath}
\end{lemma}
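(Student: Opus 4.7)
The plan is to proceed by induction on $m$, with two trivial base cases: $\phi_{1}(a) = a = [a]$, and $\phi_{a'}(a) = -a' \cdot a + a \cdot a' = 0 = [a,a']$, where the last equality uses that primitive elements commute in any left Bruck Hopf algebra.

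For the odd cases $m = 2k+1$, I would use a symmetry argument based on the antipode. Since $S$ is an algebra homomorphism with $S^{2} = \Id$, $S(a)=-a$ for $a \in T$, and consequently $S(a'^{k}) = (-1)^{k} a'^{k}$ on the associative subalgebra generated by the primitive element $a'$, applying $S$ to the expansion
\begin{displaymath}
\phi_{a'^{m}}(a) = \sum_{k=0}^{m} \binom{m}{k} (-1)^{k} a'^{k}(a\cdot a'^{m-k})
\end{displaymath}
produces $S(\phi_{a'^{m}}(a)) = (-1)^{m+1}\phi_{a'^{m}}(a)$. Since $\phi_{a'^{m}}(a)\in T$ is primitive we also have $S(\phi_{a'^{m}}(a)) = -\phi_{a'^{m}}(a)$, so $[\,1+(-1)^{m+1}\,]\phi_{a'^{m}}(a)=0$. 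For odd $m$ this forces $\phi_{a'^{m}}(a) = 0 = [a,\stackrel{m}{\dots}a']$.

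For the even cases $m = 2k$, I would prove $\phi_{a'^{2k}}(a) = R_{a',a'}^{k}(a) = [a,\stackrel{2k}{\dots}a']$ by induction on $k$. A preliminary computation using the left Bol identity $a'(yz) + y(a'z) = 2(a'y)z$ (valid for primitive $y$) together with the monoalternative relation $L_{a'^{n}} = L_{a'}^{n}$ verifies both the initial case $\phi_{a'^{2}}(a) = a\cdot a'^{2} - a'\cdot(a a') = [a,a',a']$ and the companion identity $R_{a',a'}(b) = b\cdot a'^{2} - a'^{2}\cdot b$ for any $b \in T$. The inductive step then reduces to proving the recursion
\begin{displaymath}
\phi_{a'^{m+2}}(a) = \phi_{a'^{m}}(a)\cdot a'^{2} - a'^{2}\cdot \phi_{a'^{m}}(a) = R_{a',a'}(\phi_{a'^{m}}(a)),
\end{displaymath}
which I would verify by expanding $\phi_{a'^{m+2}}(a)$ via Pascal's identity $\binom{m+2}{k} = \binom{m}{k-2}+2\binom{m}{k-1}+\binom{m}{k}$ and then repeatedly invoking left Bol and monoalternativity to match each summand on both sides.

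The main obstacle is the combinatorial bookkeeping in verifying this recursion: the absence of a right Bol identity in $U(T)$ means that products of the form $a'^{k}(a\cdot a'^{m+2-k})\cdot a'^{2}$ cannot be freely associated, and the cancellations between the three Pascal contributions must be tracked carefully across many non-associative terms. An equivalent but more conceptual formulation is that the generating function $A(t) := \sum_{m\geq 0}\frac{t^{m}}{m!}\phi_{a'^{m}}(a) = e^{-ta'}\cdot(a\cdot e^{ta'})$---with $e^{ta'}$ group-like in $U(T)$ thanks to the primitivity of $a'$---satisfies the second-order linear differential equation $A''(t) = R_{a',a'}(A(t))$ with initial conditions $A(0) = a$ and $A'(0) = 0$, whose unique formal power-series solution is $A(t) = \sum_{k\geq 0}\frac{t^{2k}}{(2k)!}R_{a',a'}^{k}(a)$, yielding the lemma at once.
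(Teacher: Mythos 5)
Your treatment of the odd case via the antipode ($S(\phi_{a'^{m}}(a))=(-1)^{m+1}\phi_{a'^{m}}(a)$ versus $S=-\Id$ on $T$) is exactly the paper's. For the even case the paper sidesteps the ``combinatorial bookkeeping'' you flag as the main obstacle: writing $a'^{m+2}=a'(a'x)$ with $x=a'^{m}$, expanding the coproduct, and using $L_{a'^{n}}=L_{a'}^{n}$ to pull every $S(x\1)$ to the front, it reduces everything to the single operator identity $[L_{a'},[L_{a'},L_{a}]]=L_{[a,a',a']}$ from (\ref{eq:Lts}), obtaining the recursion $\phi_{a'^{m+2}}(a)=\phi_{a'^{m}}([a,a',a'])$. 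Your generating-function fallback is this same mechanism in exponential form: $A(t)=e^{-tL_{a'}}L_{a}e^{tL_{a'}}(1)$, so $A''(t)=e^{-tL_{a'}}[L_{a'},[L_{a'},L_{a}]]e^{tL_{a'}}(1)=\phi_{e^{ta'}}([a,a',a'])$, which is the paper's recursion summed over $m$; so that route is sound and I would make it the proof. One imprecision worth fixing: what falls out of differentiating twice is $A_{a}''=A_{[a,a',a']}$ (the operator $R_{a',a'}$ acts on the \emph{initial} primitive element, not on the solution), whereas your stated equation $A''=R_{a',a'}(A)$, though true, needs the extra fact $\ad(L_{a'})^{2k}(L_{a})=L_{R_{a',a'}^{k}(a)}$ -- essentially the induction being run -- or the remark that both recursions generate the same coefficients $R_{a',a'}^{k}(a)$. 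Either form closes the induction; the Pascal-identity plan, by contrast, should be dropped, since (as you correctly observe) the absence of a right Bol identity prevents the reassociations it would require.
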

\begin{proof}
Since $S$ is the automorphism of the algebra $U(T)$ determined by $S(a) = - a$ for any $a \in T$ \cite{MP10a}, then
\begin{eqnarray*}
S(\phi_{a'^{n}}(a)) &=& \sum_{i=0}^n \binom{n}{i} (-1)^i S(a'^i (a a'^{n-i})) = (-1)^{n+1}\sum_{i=0}^n \binom{n}{i} (-1)^i a'^i (a a'^{n-i}) \\
&=& (-1)^{n+1} \phi_{a'^n}(a),
\end{eqnarray*} 
which implies $\phi_{a'^{{2m+1}}} = 0$.  For $x := a'^{2m}$ we have
\begin{eqnarray*}
\phi_{a'(a'x)} (a)&=& a'(a'(S(x\1)))(ax\2) - 2 (a'S(x\1))(a'(ax\2)) \\
&& \quad + S(x\1)(a(a'(a' x\2)))\\
&\stackrel{\langle 1 \rangle}{=}& S(x\1) \left( a'(a'(a x\2)) - 2 a'(a(a' x\2)) + a(a'(a' x\2)) \right)\\
&=& S(x\1) \left( L_{a'}L_{a'}L_a - 2 L_{a'}L_aL_{a'} + L_{a'}L_aL_a \right)(x\2)\\
&=& S(x\1) [L_{a'},[L_{a'},L_a]](x\2) \stackrel{\langle 2 \rangle}{=} S(x\1)L_{[a,a',a']}(x\2) \\
&=& \phi_{x}([a,a',a'])
\end{eqnarray*}
where $\langle 1 \rangle$ follows from  (\ref{eq:monoalternativeH})  and  $\langle 2 \rangle$ follows from (\ref{eq:Lts}). 
\end{proof}

\begin{lemma}
\label{lem:autphi}
Let $T$ be a commutative automorphic Lie triple system. For any $x \in U(T)$ and any $a,b,c$ in $T$ we have 
\begin{displaymath}
\phi_x([a,b,c]) = [\phi_{x\1}(a),\phi_{x\2}(b),\phi_{x\3}(c)].
\end{displaymath}
\end{lemma}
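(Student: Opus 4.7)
The plan is to linearize in $x$ and reduce to the case $x=a'^n$ for $a'\in T$, $n\ge 0$, since these powers span $U(T)$ in characteristic zero. For $n$ odd, Lemma \ref{lem:phi} gives $\phi_{a'^n}(y)=0$ for every $y\in T$, so the left-hand side vanishes; meanwhile every Sweedler summand $[\phi_{a'^i}(a),\phi_{a'^j}(b),\phi_{a'^k}(c)]$ coming from $\Delta^{(2)}(a'^n)=\sum\binom{n}{i,j,k}\,a'^i\otimes a'^j\otimes a'^k$ must have at least one odd exponent, making one factor vanish. For $n=2m$ even, Lemma \ref{lem:phi} gives $\phi_{a'^{2m}}(y)=R^m(y)$ with $R:=R_{a',a'}$, so the left-hand side becomes $R^m([a,b,c])$ and the right-hand side rearranges, after restricting to even exponents, to $\sum_{i'+j'+k'=m}\binom{2m}{2i',2j',2k'}[R^{i'}(a),R^{j'}(b),R^{k'}(c)]$.

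Next I would use that the commutative automorphic axiom says every $R_{p,q}$ is a derivation of $T$, in particular so is $R$; moreover, by the lemma following Lemma \ref{lem:skew}, every power $R^\ell$ is also a derivation. Leibniz' rule then gives $R^m([a,b,c])=\sum_{i+j+k=m}\binom{m}{i,j,k}[R^i(a),R^j(b),R^k(c)]$, and Lemma \ref{lem:skew}(2) (which says $[R(x),R(y),z]=0=[R(x),y,R(z)]$) kills every summand of either sum that has $i\ge 1$ and $j+k\ge 1$.

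The crux is to kill the remaining ``mixed'' brackets $[a,R(b),R(c)]$, $[a,R^2(b),c]$, and $[a,b,R^2(c)]$, none of which follow from Lemma \ref{lem:skew}(2) alone. For the first, I would equate the Leibniz expansion of $R^2([a,b,c])$ with the direct derivation expansion $[R^2(a),b,c]+[a,R^2(b),c]+[a,b,R^2(c)]$: after Lemma \ref{lem:skew}(2) eliminates four of the six ``cross'' terms, the difference collapses to $2[a,R(b),R(c)]=0$. For the second, I would use the inner-derivation commutator formula $[R_{r,s},R_{p,q}]=R_{R_{r,s}(p),q}+R_{p,R_{r,s}(q)}$ (a direct consequence of the CA axiom, since the $R_{p,q}$ form a Lie subalgebra of $\Der(T)$) applied with $(r,s)=(R(b),c)$, $(p,q)=(a',a')$, and evaluated at $a$: this yields $[a,R^2(b),c]=-R[a,R(b),c]$, while Leibniz (using the just-established $[a,R(b),R(c)]=0$) gives $R[a,R(b),c]=[a,R^2(b),c]$, forcing $2[a,R^2(b),c]=0$. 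The third identity is entirely analogous.

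Once these three extra cancellations are in hand, iterating $R$ as a derivation and substituting $b\mapsto R^{j-1}(b)$, $c\mapsto R^{k-1}(c)$ in the vanishings shows that for $m\ge 2$ only the index $(m,0,0)$ contributes nontrivially to both sums, so both sides collapse to $[R^m(a),b,c]$; the case $m=1$ reduces directly to the CA derivation rule, matching on both sides term-by-term. The hard part is precisely those three extra cancellations, which sit at the interface between the CA axiom, Lemma \ref{lem:skew}(2), and the derivation property of the powers $R^\ell$; once they are in place, the rest is multinomial bookkeeping.
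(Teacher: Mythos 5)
Your skeleton is the same as the paper's: reduce to $x=a'^n$, dispose of odd $n$ because $\phi_{a'^{\mathrm{odd}}}=0$, and for $n=2m$ identify $\phi_{a'^{2m}}$ with $R^m$, $R:=R_{a',a'}$, which is a derivation. But the endgame goes wrong. First, $[a,R(b),R(c)]=0$ is not a gap left by Lemma \ref{lem:skew}(2): by antisymmetry in the first two slots, $[a,R(b),R(c)]=-[R(b),a,R(c)]=-[[b,a',a'],a,[c,a',a']]$, which is literally an instance of Lemma \ref{lem:skew}(2). Your detour through two expansions of $R^2([a,b,c])$ is unnecessary, and it is circular if the derivation property of $R^2$ has not already been granted, since that property \emph{is} the statement that all the cross terms vanish.

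Second, and this is the real error, the terms $[a,R^m(b),c]$ and $[a,b,R^m(c)]$ must \emph{not} be killed. They appear with coefficient $\binom{m}{0,m,0}=1$ in the Leibniz expansion of $R^m([a,b,c])$ and with coefficient $\binom{2m}{0,2m,0}=1$ in the Sweedler sum on the right, so they match, and both sides collapse to $[R^m(a),b,c]+[a,R^m(b),c]+[a,b,R^m(c)]$, not to $[R^m(a),b,c]$ alone. These terms are genuinely nonzero in general: in the free system $\T$ with $a'=a=u$ and $b=c=v$ one computes $[u,R^2(v),v]=[u,v,\stackrel{4}{\dots}u,v]$, a basis element by Theorem \ref{thm:basis}. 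Consequently your identity $[a,R^2(b),c]=-R[a,R(b),c]$ is false (combined with the Leibniz computation $R[a,R(b),c]=[a,R^2(b),c]$ it would force $2[u,R^2(v),v]=0$), so the application of the commutator formula that produces it must be in error. The repair is simply to stop earlier: after Lemma \ref{lem:skew}(2) and antisymmetry eliminate every summand with at least two positive indices, only $(m,0,0)$, $(0,m,0)$, $(0,0,m)$ survive on the right, and this matches the derivation expansion of $R^m$ on the left --- which is exactly the paper's (very short) argument.
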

\begin{proof}
Without loss of generality we can assume that $x = a'^n$ for some $a' \in T$ and $n \geq 0$.  The case $n = 0$ is trivial so we also assume that $n \geq 1$. If $n$ is odd then $x\1, x\2$ or $x\3$ involves an odd power of $a'$ so both sides of the formula in the statement vanish. If $n$ is even, $n = 2m$ for some $m$, then by Lemma~\ref{lem:phi} and  Lemma~\ref{lem:skew} part (2) $\phi_x = R_{a',a'}^m$ is a derivation of $T$. Thus,
\begin{displaymath}
\phi_x([a,b,c]) = [\phi_{x}(a),b,c] + [a,\phi_x(b),c] + [a,b,\phi_x(c)]. 
\end{displaymath}
Again, Lemma \ref{lem:phi} and Lemma \ref{lem:skew}  part (2)  ensure that this equality is equivalent to the equality in the statement.
\end{proof}

\begin{proposition}
\label{prop:autphi}
Let $T$ be a commutative automorphic Lie triple system. There exist linear maps  $\phi_x \colon U(T) \rightarrow U(T)$ ($x\in U(T)$) such that $\phi_x (a)=S(x\1)(a x\2)$ for any $a \in T$ and 
\begin{equation}
\label{eq:fa}
\phi_x(yz) = \phi_{x\1}(y) \phi_{x\2}(z)
\end{equation}
for any $y,z \in U(T)$.
\end{proposition}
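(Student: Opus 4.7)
The plan is to exploit the universal presentation $U(T) = \field\{T\}^{\#}/I$, where $I$ is the ideal generated by the relations $R$ described in Section~\ref{subsec:CAformalloos}. First I define, for each $x \in U(T)$, a linear map $\tilde{\phi}_x \colon \field\{T\}^{\#} \to U(T)$ by recursion on word length: $\tilde{\phi}_x(1) := \epsilon(x)1$, $\tilde{\phi}_x(a) := S(x\1)(a x\2)$ for $a \in T$, and $\tilde{\phi}_x(uv) := \tilde{\phi}_{x\1}(u)\tilde{\phi}_{x\2}(v)$, where the product on the right is taken in $U(T)$. The computation preceding Lemma~\ref{lem:phi} shows $\tilde{\phi}_x(a) \in T$, so the recursion makes sense. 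Once we prove $\tilde{\phi}_x(I) = 0$, the induced map $\phi_x \colon U(T) \to U(T)$ inherits both of the required properties. The multiplicativity $\tilde\phi_x(uv) = \tilde\phi_{x\1}(u)\tilde\phi_{x\2}(v)$ moreover implies that if $\tilde{\phi}_y(r) = 0$ in $U(T)$ for every $y \in U(T)$ and every $r \in R$, then $\tilde{\phi}_x$ annihilates the whole ideal generated by $R$; thus it suffices to verify the vanishing on the two families of generators of $R$.

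For the Lie triple relation $r = [a,b,c] - a(bc) + b(ac)$ with $a,b,c \in T$, multiplicativity gives
\begin{displaymath}
\tilde{\phi}_x(r) = \tilde{\phi}_x([a,b,c]) - \tilde{\phi}_{x\1}(a)\bigl(\tilde{\phi}_{x\2}(b)\tilde{\phi}_{x\3}(c)\bigr) + \tilde{\phi}_{x\1}(b)\bigl(\tilde{\phi}_{x\2}(a)\tilde{\phi}_{x\3}(c)\bigr).
\end{displaymath}
Lemma~\ref{lem:autphi} rewrites the first summand as $[\tilde{\phi}_{x\1}(a),\tilde{\phi}_{x\2}(b),\tilde{\phi}_{x\3}(c)]$, which in turn equals the difference of the next two summands by the defining identity $[u,v,w]=u(vw)-v(uw)$ for primitive elements in $U(T)$. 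Hence $\tilde{\phi}_x(r) = 0$.

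For the associator relation $r = (a,y,z) + (y,a,z)$ with $a \in T$ and $y,z \in \field\{T\}^{\#}$, expanding the associators and applying multiplicativity yields
\begin{displaymath}
\tilde{\phi}_x(r) = \bigl(\tilde{\phi}_{x\1}(a), \tilde{\phi}_{x\2}(y), \tilde{\phi}_{x\3}(z)\bigr) + \bigl(\tilde{\phi}_{x\1}(y), \tilde{\phi}_{x\2}(a), \tilde{\phi}_{x\3}(z)\bigr).
\end{displaymath}
Since $\tilde{\phi}_{x_{(i)}}(a) \in T$, the defining relation $(a',u,v) + (u,a',v) = 0$ of $U(T)$ (for $a' \in T$ and $u,v \in U(T)$) combined with cocommutativity $x\1 \otimes x\2 \otimes x\3 = x\2 \otimes x\1 \otimes x\3$ forces this sum to vanish.

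The main obstacle has in fact already been cleared: the nontrivial content—that $\phi_x$ respects the Lie triple product when $x = a'^{2}$—is exactly the derivation identity expressing the commutative automorphic axiom, as was established in Lemmas~\ref{lem:phi} and~\ref{lem:autphi}. With Lemma~\ref{lem:autphi} in hand, the descent to $U(T)$ reduces to the Sweedler-notation bookkeeping above.
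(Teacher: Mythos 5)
Your proof is correct and takes essentially the same route as the paper's: extend $\phi_x$ multiplicatively to the free algebra $\field\{T\}^{\#}$ and use Lemma~\ref{lem:autphi} (together with the skew-symmetry of the associator in a $T$-slot and cocommutativity) to check that the defining relations $R$ are respected, so the map descends to $U(T)$. The only cosmetic difference is that you map into $U(T)$ and show the ideal is annihilated, whereas the paper keeps an endomorphism of $\field\{T\}^{\#}$ and observes that $\phi_x(R) \subseteq \spann(R)$.
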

\begin{proof}
Up to isomorphism, $U(T)$ is the quotient algebra of the unital free algebra $\field\{ T \}^\#$ by the ideal generated by
\begin{displaymath}
R := \{ [a,b,c] - a(bc) + b(ca), (a,y,z) + (y,a,z) \mid a,b,c \in T \text{ and }  y,z \in \field\{T\}^{\#}\}.
\end{displaymath}
We extend $\phi_x \colon T \rightarrow T$ to $\field\{ T \}^\#$ by imposing
\begin{displaymath}
\phi_x(1) := \epsilon(x) 1 \quad \text{and} \quad \phi_x(uv) := \phi_{x\1}(u)\phi_{x\2}(v)
\end{displaymath}
for any $u, v \in \field\{T\}^{\#}$. By Lemma \ref{lem:autphi}, $\phi_x(R)$ consists of linear combinations of elements in $R$, so $\phi_x$  preserves the ideal generated by $R$. This proves that $\phi_x$ induces a map $\phi_x \colon U(T) \rightarrow U(T)$ that fulfills all our requirements.
\end{proof}

\subsection{The non-associative Hopf algebra $\dot{U}(T)$.}
We can define a new product on the vector space $U(T)$ by
\begin{equation}
\label{eq:CAUEA}
x \cdot y := x\1 \phi_{x\2}(y).
\end{equation}
This product has the same unit element as the product $xy$ since $1 \cdot y = 1 \phi_1(y) = y = y\1 \epsilon(y \2) = y \cdot 1$. To avoid confusions, the algebraic structure
\begin{displaymath}
(U(T), \Delta, \epsilon, x \cdot y, 1)
\end{displaymath}
will be denoted by $\dot{U}(T)$, i.e. $\dot{U}(T)$ is the same vector space as $U(T)$ endowed with the same coalgebra structure and the same unit element but with a different product. Our goal is to prove that $\dot{U}(T)$ is a commutative automorphic Hopf algebra, $T = \Prim(\dot{U}(T))$  and $[a,b,c] = - (a,c,b)^{\cdot}$ for any $a,b,c \in T$. In fact, we already know that $T = \Prim(\dot{U}(T))$ since the coalgebra structure of $\dot{U}(T)$ is the same as the coalgebra structure of $U(T)$.

\begin{lemma}
\label{lem:L}
Let $(T,[a,b,c])$ be a commutative automorphic Lie triple system. For any $a \in T$ and any $y \in \dot{U}(T)$ we have
\begin{enumerate}
\item $\dot{L}_a = L_a$ and
\item $a \cdot y = y \cdot a$.
\end{enumerate}
\end{lemma}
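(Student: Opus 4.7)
The plan is to handle both parts via the definition $x \cdot y = x\1\phi_{x\2}(y)$ together with the explicit formula $\phi_x(a) = S(x\1)(ax\2)$ and the left Bruck antipode identity (\ref{eq:antipodeS}).

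For (1), I first check that $\phi_1 = \mathrm{Id}$: one has $\phi_1(b) = S(1)(b\cdot 1) = b$ for $b \in T$, and by (\ref{eq:fa}), $\phi_1$ is a unital algebra endomorphism of $U(T)$, hence the identity on the generators $T$ and so on all of $U(T)$. Setting $x = a \in T$ in (\ref{eq:fa}) and using $\Delta(a) = a \otimes 1 + 1 \otimes a$ gives $\phi_a(yz) = \phi_a(y)\,z + y\,\phi_a(z)$, so $\phi_a$ is a derivation of $U(T)$. It vanishes on $1$ (since $\epsilon(a) = 0$) and on every primitive $b$ because $\phi_a(b) = S(a)b + 1\cdot(ba) = -ab + ba = 0$ by commutativity of primitive elements in the left Bruck Hopf algebra $U(T)$. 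Since $U(T)$ is generated by $T$ as a unital algebra (the powers $\{c^n\mid c \in T, n\in \N\}$ span it), $\phi_a \equiv 0$. Plugging this back, $a \cdot y = a\,\phi_1(y) + 1\cdot\phi_a(y) = ay$, i.e.\ $\dot{L}_a = L_a$.

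For (2), I would unravel
\begin{displaymath}
y \cdot a = y\1\,\phi_{y\2}(a) = y\1\bigl(S((y\2)\1)\,(a\,(y\2)\2)\bigr).
\end{displaymath}
By coassociativity, the triple $y\1 \otimes (y\2)\1 \otimes (y\2)\2$ can also be written as $w\1 \otimes w\2 \otimes u$, where $\Delta(y) = w \otimes u$ and $\Delta(w) = w\1 \otimes w\2$. Hence
\begin{displaymath}
y \cdot a = w\1\bigl(S(w\2)\,(a\,u)\bigr),
\end{displaymath}
and (\ref{eq:antipodeS}) applied with $x := w$ and $z := a\,u$ yields $y \cdot a = \epsilon(w)\,(a\,u) = a\,(\epsilon(w)\,u) = a\,y$ by counit. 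Combined with (1), this gives $y \cdot a = ay = a \cdot y$.

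The only delicate point is the coassociativity relabeling in (2): the definition of $\phi_{y\2}$ introduces an ``inner'' comultiplication of $y\2$, and one must recognize via $(\Id \otimes \Delta)\Delta = (\Delta \otimes \Id)\Delta$ that the outer expression is of the form appearing in the antipode identity (\ref{eq:antipodeS}). Apart from this Sweedler-index bookkeeping, the argument is mechanical, and no additional identities beyond (\ref{eq:fa}) and (\ref{eq:antipodeS}) are needed.
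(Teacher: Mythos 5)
Your proof is correct and follows essentially the same route as the paper: both parts reduce to showing $\phi_a=0$ for $a\in T$ (so that $a\cdot y=ay$) and then identifying $y\cdot a=y\1\phi_{y\2}(a)$ with $ay$ via coassociativity and the antipode identity (\ref{eq:antipodeS}). The only cosmetic difference is that you verify $\phi_a(b)=-ab+ba=0$ directly from commutativity of primitives, where the paper cites Lemma~\ref{lem:phi}; the underlying computation is the same one the paper records just before that lemma.
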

\begin{proof}
By definition of $x \cdot y$ and $\phi_x$, 
\begin{eqnarray*}
	a \cdot y &=& a\1 \phi_{a\2}(y) = a y + \phi_a(y) \\
	&\stackrel{\langle 1 \rangle}{=} & ay \stackrel{\langle 2 \rangle}{=} y\1 (S(y\2)(a y\3)) = y\1 \phi_{y\2}(a) \\
	&=& y \cdot a.
\end{eqnarray*} 
where $\langle 1 \rangle$ follows from Proposition~\ref{prop:autphi} and Lemma~\ref{lem:phi},  and $\langle 2 \rangle$ follows from (\ref{eq:antipodeS}). This proves the lemma. 
\end{proof}

\begin{proposition}
Let $(T,[a,b,c])$ be a commutative automorphic Lie triple system. For any $a,b,c \in T$ we have 
\begin{displaymath}
[a,b,c] = - (a,c,b)^{\cdot},
\end{displaymath}
where $(x,y,z)^{\cdot}$ stands for the associator $(x\cdot y) \cdot z - x \cdot (y \cdot z)$ of $x,y,z$ in $\dot{U}(T)$.
\end{proposition}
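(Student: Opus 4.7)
The target identity $[a,b,c] = -(a,c,b)^{\cdot}$ unpacks to
\begin{displaymath}
[a,b,c] \;=\; a \cdot (c \cdot b) - (a \cdot c) \cdot b,
\end{displaymath}
so the plan is to reduce each of the two summands on the right to products in the original left Bruck Hopf algebra $U(T)$ and then invoke the known formula $[a,b,c] = a(bc) - b(ac)$ valid on primitive elements.

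The key tool is Lemma~\ref{lem:L}: part (1) says that for any $a \in T$, the operator $\dot{L}_a$ coincides with $L_a$, i.e.\ $a \cdot y = ay$ in $U(T)$ for every $y$; part (2) says that $a \cdot y = y \cdot a$, i.e.\ primitive elements are central for the new product. Using these, I would compute the first summand as
\begin{displaymath}
a \cdot (c \cdot b) \;=\; a \cdot (cb) \;=\; a(cb),
\end{displaymath}
applying part (1) first with the primitive element $c$ and then with the primitive element $a$. For the second summand, the factor $a \cdot c = ac$ is not primitive, so I cannot apply part (1) directly with it on the left; instead I would use part (2) to swap and then part (1):
\begin{displaymath}
(a \cdot c) \cdot b \;=\; b \cdot (a \cdot c) \;=\; b \cdot (ac) \;=\; b(ac).
\end{displaymath}

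Putting these together gives $a \cdot (c \cdot b) - (a \cdot c) \cdot b = a(cb) - b(ac)$. The last step is to recognize that primitive elements in the left Bruck Hopf algebra $U(T)$ commute (this is one of the basic facts recalled in the paragraph surrounding \eqref{eq:Lts}), so $cb = bc$, and hence
\begin{displaymath}
a(cb) - b(ac) \;=\; a(bc) - b(ac) \;=\; [a,b,c],
\end{displaymath}
the last equality being the defining formula for the Lie triple product on $T = \Prim(U(T))$. There is really no obstacle here; the only point requiring a moment's care is that one cannot rewrite $(a \cdot c) \cdot b$ as $(ac) \cdot b = (ac)b$ directly, because $ac$ is not primitive, which is why one must route through Lemma~\ref{lem:L}(2) to push $b$ to the left before applying Lemma~\ref{lem:L}(1).
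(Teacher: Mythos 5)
Your proposal is correct and follows essentially the same route as the paper: both proofs reduce the $\dot{U}(T)$-associator to $a(bc)-b(ac)$ by invoking Lemma~\ref{lem:L} (that $\dot{L}_a=L_a$ and $a\cdot y=y\cdot a$ for primitive $a$). The only cosmetic difference is that you commute $cb=bc$ inside $U(T)$ using the commutativity of primitive elements in a left Bruck Hopf algebra, whereas the paper swaps $c\cdot b=b\cdot c$ in $\dot{U}(T)$ via Lemma~\ref{lem:L}(2) before descending to $U(T)$; both steps are justified.
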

\begin{proof}
By Lemma \ref{lem:L}, $-(a,c,b)^{\cdot} = -(a \cdot c) \cdot b + a \cdot (c \cdot b) = a \cdot (b \cdot c) - b \cdot (a \cdot c) = a(bc) - b(ac) = [a,b,c]$.
\end{proof}

\begin{lemma}
\label{lem:deltaphi}
Let $(T,[a,b,c])$ be a commutative automorphic Lie triple system. For any $x,y \in U(T)$ we have $\Delta(\phi_x(y)) = \phi_{x\1}(y\1) \otimes \phi_{x\2}(y\2)$ and $\epsilon(\phi_x(y)) = \epsilon(x) \epsilon(y)$.
\end{lemma}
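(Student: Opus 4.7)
The plan is to prove both identities by induction on the algebra-generator length of $y$, using that $U(T)$ is generated as a unital algebra by $T$ together with the multiplicative formula (\ref{eq:fa}) from Proposition~\ref{prop:autphi}. Both identities are linear in $y$, so it suffices to check them on a spanning set of monomials $y = a_1 a_2 \cdots a_n$ with $a_i \in T$.

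\textbf{Base cases.} For $y = 1$, one has $\phi_x(1) = \epsilon(x) 1$, so $\Delta(\phi_x(1)) = \epsilon(x)(1\otimes 1)$ and $\epsilon(\phi_x(1)) = \epsilon(x)$; on the other hand $\phi_{x\1}(1) \otimes \phi_{x\2}(1) = \epsilon(x\1)\epsilon(x\2)(1 \otimes 1) = \epsilon(x)(1 \otimes 1)$, and $\epsilon(x)\epsilon(1) = \epsilon(x)$. For $y = a \in T$, it was observed in Section~\ref{subsec:CAformalloos} (and reiterated when $\phi_x$ was constructed) that $\phi_x(a) \in T$, hence $\Delta(\phi_x(a)) = \phi_x(a)\otimes 1 + 1 \otimes \phi_x(a)$ and $\epsilon(\phi_x(a)) = 0 = \epsilon(x)\epsilon(a)$. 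Expanding the claimed right-hand side with $\Delta(a) = a\otimes 1 + 1 \otimes a$ and using the base identity $\phi_z(1) = \epsilon(z) 1$ together with the counit axiom $\epsilon(x\1) x\2 = x = x\1 \epsilon(x\2)$ yields exactly $\phi_x(a) \otimes 1 + 1 \otimes \phi_x(a)$.

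\textbf{Inductive step.} Assume both identities hold for $u, v \in U(T)$. By (\ref{eq:fa}), $\phi_x(uv) = \phi_{x\1}(u)\phi_{x\2}(v)$. Since $\Delta$ and $\epsilon$ are algebra homomorphisms on the (non-associative) Hopf algebra $U(T)$, one computes
\begin{align*}
\Delta(\phi_x(uv)) &= \Delta(\phi_{x\1}(u))\,\Delta(\phi_{x\2}(v)) \\
&= \bigl(\phi_{x\1\1}(u\1)\otimes\phi_{x\1\2}(u\2)\bigr)\bigl(\phi_{x\2\1}(v\1)\otimes\phi_{x\2\2}(v\2)\bigr)
\end{align*}
by the inductive hypothesis, and similarly $\epsilon(\phi_x(uv)) = \epsilon(\phi_{x\1}(u))\epsilon(\phi_{x\2}(v)) = \epsilon(x\1)\epsilon(u)\epsilon(x\2)\epsilon(v) = \epsilon(x)\epsilon(uv)$. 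For the coproduct identity one then uses coassociativity and cocommutativity of $U(T)$ (which allow swapping of the middle two factors in $(\Delta\otimes\Delta)\Delta(x)$) to rewrite the above tensor product as
\begin{equation*}
\phi_{x\1}(u\1)\phi_{x\2}(v\1) \otimes \phi_{x\3}(u\2)\phi_{x\4}(v\2) = \phi_{x\1}(u\1 v\1) \otimes \phi_{x\2}(u\2 v\2) = \phi_{x\1}((uv)\1) \otimes \phi_{x\2}((uv)\2),
\end{equation*}
using (\ref{eq:fa}) again and the fact that $\Delta(uv) = u\1 v\1 \otimes u\2 v\2$.

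\textbf{Obstacles.} There are no deep difficulties here; the whole content of the lemma is bookkeeping with Sweedler notation. The only subtle point is to correctly apply cocommutativity (the identity $x\1 \otimes x\2 \otimes x\3 \otimes x\4 = x\1 \otimes x\3 \otimes x\2 \otimes x\4$) in the inductive step so that the rearranged tensor matches $(\Delta\otimes\Delta)$ applied to $\Delta(x)$ in the right order. Once this is done, the result follows. The commutative automorphic hypothesis on $T$, which was crucial for the very existence of $\phi_x$ on $U(T)$ in Proposition~\ref{prop:autphi}, plays no further role here: we only exploit the multiplicativity (\ref{eq:fa}) and the coalgebra axioms of $U(T)$.
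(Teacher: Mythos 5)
Your proof is correct and follows essentially the same route as the paper's: verify the two identities on the generators $1$ and $a \in T$, then propagate them through products via the multiplicativity (\ref{eq:fa}), coassociativity and cocommutativity. The only cosmetic difference is in the base case $y = a$, where you invoke the previously established primitivity of $\phi_x(a)$ while the paper redoes the direct computation with $\Delta(S(x\1)(a x\2))$ and (\ref{eq:antipodeS}); these are the same calculation.
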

\begin{proof}
For $y=1$ we have $\Delta(\phi_x(1)) = \Delta(\epsilon(x)1) = \epsilon(x) 1 \otimes 1 = \epsilon(x\1) 1 \otimes \epsilon(x\2)1 = \phi_{x\1}(1) \otimes \phi_{x\2}(1)$. For $y=a \in T$ we get $\Delta(\phi_x(a)) = \Delta (S(x\1)(a x\2)) = S(x\1)(a x\2) \otimes  S(x\3) x\4 + S(x\1) x\2 \otimes  S(x\3) (a x\4) = S(x\1)(a x\2) \otimes  \epsilon(x\3)1 + \epsilon(x\1) 1\otimes  S(x\2) (a x\3) = \phi_{x\1}(a) \otimes \phi_{x\2}(1) + \phi_{x\1}(1) \otimes \phi_{x\2}(a) = \phi_{x\1}(a\1) \otimes \phi_{x\2}(a\2)$. Since $T$ generates the algebra $U(T)$ these initial steps and (\ref{eq:fa}) show that the result is true. Notice that we have freely used (\ref{eq:antipodeS}) and $\Delta S = (S \otimes S) \Delta$, which is true since both maps are homomorphisms $U(T) \rightarrow U(T) \otimes U(T)$ of unital algebras that agree on the generator set $T$.
\end{proof}

\begin{proposition}
Let $(T,[a,b,c])$ be a commutative automorphic Lie triple system. Then $\dot{U}(T)$ is a connected Hopf algebra.
\end{proposition}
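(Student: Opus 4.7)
My plan is to verify each piece of the non-associative Hopf algebra axioms for the new product $x \cdot y = x\1 \phi_{x\2}(y)$ on the connected coalgebra $(\field[T], \Delta, \epsilon)$ underlying $U(T)$. Connectedness is free: by construction $\dot{U}(T)$ carries the same coalgebra structure as $U(T)$, which is connected since it is isomorphic to $(\field[T], \Delta, \epsilon)$.

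First I would dispatch the easy items. The unit axiom $1 \cdot y = y = y \cdot 1$: the left identity is $1 \cdot y = 1 \cdot \phi_1(y) = y$ since $\phi_1 = \Id$, and the right identity is handled exactly as in the proof of Lemma~\ref{lem:L}. For the counit, $\epsilon(x \cdot y) = \epsilon(x\1)\epsilon(\phi_{x\2}(y)) = \epsilon(x\1)\epsilon(x\2)\epsilon(y) = \epsilon(x)\epsilon(y)$ using Lemma~\ref{lem:deltaphi}; the $\Delta(1) = 1 \otimes 1$ and $\epsilon(1) = 1$ conditions are inherited from $U(T)$.

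The main calculation is the multiplicativity of $\Delta$ with respect to $\cdot$. Starting from
\begin{equation*}
\Delta(x \cdot y) = \Delta\bigl(x\1 \phi_{x\2}(y)\bigr) = \Delta(x\1)\,\Delta(\phi_{x\2}(y)),
\end{equation*}
where the last equality uses that $\Delta$ is multiplicative for the original product in $U(T)$, I apply Lemma~\ref{lem:deltaphi} to get
\begin{equation*}
\Delta(x \cdot y) = \bigl(x\1\1 \otimes x\1\2\bigr)\bigl(\phi_{x\2\1}(y\1) \otimes \phi_{x\2\2}(y\2)\bigr) = x\1\1 \phi_{x\2\1}(y\1) \otimes x\1\2 \phi_{x\2\2}(y\2).
\end{equation*}
Coassociativity rewrites $(x\1\1, x\1\2, x\2\1, x\2\2)$ as $(x\1, x\2, x\3, x\4)$, and cocommutativity permits swapping $x\2$ and $x\3$, producing $x\1 \phi_{x\2}(y\1) \otimes x\3 \phi_{x\4}(y\2) = (x\1 \cdot y\1) \otimes (x\2 \cdot y\2) = \Delta(x) \cdot \Delta(y)$. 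This is the step where the Sweedler bookkeeping has to be done carefully, but no new algebraic input beyond Lemma~\ref{lem:deltaphi}, coassociativity, and cocommutativity is needed.

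Finally, I would construct the divisions $\dot{\backslash}$ and $\dot{/}$ recursively, exploiting connectedness exactly as the paper sketches in the subsection on connected Hopf algebras. Writing $\field[T] = \bigoplus_{n \geq 0} \field[T]_{\leq n}$ for the coradical filtration (which coincides with the symmetric-degree filtration here), one defines $x \dot{\backslash} y$ by induction on the filtration degree of $x$: set $1 \dot{\backslash} y := y$, and for $x \in \field[T]_{\leq n} \setminus \field[T]_{\leq n-1}$ split $\Delta(x) = x \otimes 1 + 1 \otimes x + \Delta'(x)$ with $\Delta'(x)$ having both tensor factors of strictly smaller filtration degree, then solve $x\1 \cdot (x\2 \dot{\backslash} y) = \epsilon(x)\,y$ for $x \dot{\backslash} y$; the unknown appears only in the $1 \otimes x$ component while the remaining terms involve already defined divisions. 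Right division is constructed symmetrically. The expected obstacle is the multiplicativity computation for $\Delta$ in step two, because it requires threading Lemma~\ref{lem:deltaphi} through a four-fold comultiplication; once that is done, the remaining verifications are formal.
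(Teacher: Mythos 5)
Your proof is correct and follows the same route as the paper: the paper simply observes that multiplicativity of $\Delta$ and $\epsilon$ for the new product is an easy consequence of Lemma~\ref{lem:deltaphi} (plus coassociativity and cocommutativity of the fixed coalgebra structure), and derives the divisions from connectedness as you do; your Sweedler bookkeeping for $\Delta(x\cdot y)$ is the intended computation. The only quibble is notational: the coradical filtration is a filtration, not a direct sum decomposition, so $\bigoplus_{n\geq 0}\field[T]_{\leq n}$ should be a nested union, but the inductive construction of $\dot{\backslash}$ and $\dot{/}$ is otherwise exactly the paper's mechanism.
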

\begin{proof}
$U(T)$ is a connected Hopf so we only have to check that $\Delta$ and $\epsilon$ are homomorphisms of unital algebras, i.e. $\Delta(x \cdot y) = x\1 \cdot y \1 \otimes x\2 \cdot y\2$, $\Delta(1) = 1 \otimes 1$, $\epsilon(x \cdot y) = \epsilon(x) \epsilon(y)$ and $\epsilon(1) = 1$, which is an easy consequence of Lemma \ref{lem:deltaphi}.
\end{proof}

\subsection{$\dot{U}(T)$ satisfies the left automorphic property}
We would like to prove
\begin{displaymath}
\dot{l}(x,y)(w \cdot z) = \dot{l}(x\1,y\1)(w) \cdot \dot{l}(x\2,y\2)(z)
\end{displaymath}
for any $x,y,w,z \in \dot{U}(T)$ (recall Definition \ref{def:CA}). The proof of this result is quite straightforward with no interesting ideas coming into play so the reader is advised to skip this part at first reading and come back to it later.

\begin{lemma}
\label{lem:SlS}
Let $(T,[a,b,c])$ be a commutative automorphic Lie triple system. We have
\begin{displaymath}
S \phi_x S = \phi_{S(x)} = \phi_x \quad \text{and} \quad Sl(x,y)S = l(x,y)
\end{displaymath}
for any $x,y \in U(T)$.
\end{lemma}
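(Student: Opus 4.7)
The overall strategy is to reduce each of the three claimed identities to a verification on the generating subspace $T \subseteq U(T)$, and then promote the result to all of $U(T)$ by an inductive argument that exploits the compatibility of $\phi_x$ and $l(x,y)$ with the algebra structure. Two ingredients make this work: (i) the maps $\phi_x$ and $l(x,y)$ preserve $T$, and (ii) $S$ is an algebra automorphism of $U(T)$ with $S|_T = -\Id$. Property (i) for $\phi_x$ follows directly from Lemma~\ref{lem:deltaphi}: for $a \in T$, $\Delta(\phi_x(a)) = \phi_{x\1}(a\1)\otimes \phi_{x\2}(a\2) = \phi_x(a)\otimes 1 + 1\otimes \phi_x(a)$. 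For $l(x,y)$, an analogous coalgebra computation using $\Delta(u\backslash v) = u\1\backslash v\1 \otimes u\2\backslash v\2$ (quoted in the paper) together with $l(x,y)(1)=\epsilon(x)\epsilon(y)$ yields $\Delta(l(x,y)(z)) = l(x\1,y\1)(z\1)\otimes l(x\2,y\2)(z\2)$, whence $l(x,y)(T)\subseteq T$. Property (ii) is the automorphic inverse property of left Bruck Hopf algebras combined with $S(a)=-a$ for $a\in T$.

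I would then establish $S\phi_x S = \phi_x$ and $Sl(x,y)S = l(x,y)$ in parallel. For $a\in T$, since $\phi_x(a),l(x,y)(a)\in T$, we have $S\phi_x S(a) = -S\phi_x(a) = \phi_x(a)$ and $Sl(x,y)S(a) = -Sl(x,y)(a) = l(x,y)(a)$. To extend to $U(T)$, observe that the defining rule $\phi_x(uv)=\phi_{x\1}(u)\phi_{x\2}(v)$ and the left automorphic identity $l(x,y)(uv)=l(x\1,y\1)(u)l(x\2,y\2)(v)$ from (\ref{eq:lCA}) imply, via $S(uv)=S(u)S(v)$, exactly analogous multiplicative rules for $S\phi_x S$ and $Sl(x,y)S$. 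Since both sides of each identity satisfy the same recursion and agree on $1$ and on $T$, induction on the length of a product in $T$-generators lifts the equality to all of $U(T)$.

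For the remaining equality $\phi_{S(x)}=\phi_x$, I exploit that $x\mapsto \phi_x$ is linear in $x$ and that $U(T)$ is spanned by elements of the form $a^n$ with $a\in T$ and $n\geq 0$; it thus suffices to verify $\phi_{S(a^n)}=\phi_{a^n}$. Since $S$ is an algebra automorphism and $a^n := a(a(\cdots(aa)))$, we have $S(a^n) = S(a)^n = (-a)^n = (-1)^n a^n$, so $\phi_{S(a^n)}=(-1)^n\phi_{a^n}$. By Lemma~\ref{lem:phi}, $\phi_{a^n}(b) = [b,\stackrel{n}{\dots}a]$ is an iterated bracket of $n+1$ arguments, which by the defining convention vanishes whenever $n+1$ is even, i.e. whenever $n$ is odd. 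Hence the sign $(-1)^n$ is harmless: for $n$ odd both sides are zero, for $n$ even $(-1)^n=1$. This gives $\phi_{S(x)}(b)=\phi_x(b)$ for all $b\in T$, and a further application of the inductive/multiplicative procedure (using $\phi_{S(x)}(uv)=\phi_{S(x\1)}(u)\phi_{S(x\2)}(v)$, which in turn uses $\Delta S = (S\otimes S)\Delta$) extends it to all $b\in U(T)$. This last step is the main obstacle and the only place where the commutative automorphic hypothesis enters essentially: without the parity-driven vanishing furnished by Lemma~\ref{lem:phi}, the raw coalgebra identities alone cannot reconcile the two explicit formulas $\phi_{S(x)}(a)=x\1(aS(x\2))$ and $\phi_x(a)=S(x\1)(ax\2)$.
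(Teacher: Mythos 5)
Your argument is correct, but for the middle equality $\phi_{S(x)}=\phi_x$ it takes a genuinely different and more roundabout route than the paper. The paper settles all three identities on the generators at once by a purely formal chain:
\begin{displaymath}
\phi_{S(x)}(a)=S\bigl(S(x\1)\bigr)(a\,S(x\2))=x\1(a\,S(x\2))=S\bigl(S(x\1)(S(a)x\2)\bigr)=S\phi_xS(a)=-S(\phi_x(a))=\phi_x(a),
\end{displaymath}
which uses only that $S$ is a multiplicative involution with $\Delta S=(S\otimes S)\Delta$ and $S\vert_T=-\Id$, together with $\phi_x(T)\subseteq T$; the passage to all of $U(T)$ is then the same generation-by-$T$ argument you use. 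Your detour for $\phi_{S(x)}=\phi_x$ --- reducing to $x=a'^n$, computing $\phi_{S(a'^n)}=(-1)^n\phi_{a'^n}$, and absorbing the sign via the vanishing of $\phi_{a'^n}$ for odd $n$ --- is valid, but observe that the vanishing $\phi_{a'^{2m+1}}=0$ is itself established in the proof of Lemma~\ref{lem:phi} by precisely the $S$-conjugation computation displayed above, so you are invoking a special case of the general argument rather than genuinely new input. Your treatment of $S\phi_xS=\phi_x$ and $Sl(x,y)S=l(x,y)$, including the verification that $l(x,y)(T)\subseteq T$ (which the paper only asserts), matches the paper's.

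One assertion in your last sentence is inaccurate and worth correcting: the commutative automorphic hypothesis is \emph{not} what reconciles the two formulas $x\1(a\,S(x\2))$ and $S(x\1)(a\,x\2)$ --- the raw Hopf-algebra identities do exactly that, as the chain above shows, with no parity-driven vanishing required. Where the hypothesis genuinely enters is earlier, in Proposition~\ref{prop:autphi}, to guarantee that $\phi_x$ descends to a well-defined multiplicative map on $U(T)$ in the first place; once that is granted, Lemma~\ref{lem:SlS} is a formal consequence.
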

\begin{proof} 
Evaluating at $1$ we get
\begin{displaymath}
S \phi_x S(1)= S\phi_x(1) = \epsilon(x) 1= \left\{ \begin{array}{l}  \phi_x(1), \\ \ \\
\epsilon(S(x)) 1 = \phi_{S(x)}(1). \end{array}   \right. 
\end{displaymath}
Evaluating at $a \in T$ we get
$S\phi_x S(a) = S\phi_x(-a) = \phi_x(a)$ since $\phi_x(a) \in T$. We also have $\phi_{S(x)}(a) = x\1(aS(x\2)) = S^2(x\1)(S^2(a)S(x\2)) =S(S(x\1)(S(a) x\2)) = S\phi_xS(a)$. Since $T$ generates $U(T)$, the general case follows from (\ref{eq:fa}) and the fact that $S$ is an automorphism of order $2$. The second identity follows in a similar way because $l(x,y)(T) \subseteq T$.
\end{proof}

\begin{lemma}\label{lem:flipphi}
Let $(T,[a,b,c])$ be a commutative automorphic Lie triple system. We have
\begin{displaymath}
\phi_z \phi_x = \phi_{\phi_{z\1}(x)}\phi_{z\2}
\end{displaymath}
for all $x,z \in U(T)$.
\end{lemma}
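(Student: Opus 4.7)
The plan is to prove the identity by viewing each side as a linear endomorphism of $U(T)$ and checking equality on a generating set. Since $T \cup \{1\}$ generates $U(T)$ as a unital algebra and both sides are linear in their arguments, it suffices to show the identity on $y = 1$ and on $y = a \in T$, and then to propagate it from $y$ and $w$ to the product $yw$ by means of the multiplicativity rule (\ref{eq:fa}). Throughout, we will repeatedly invoke Lemma \ref{lem:deltaphi} (so that the coproduct of $\phi_{z\1}(x)$ decomposes as $\phi_{(z\1)\1}(x\1) \otimes \phi_{(z\1)\2}(x\2)$) and Lemma \ref{lem:SlS} (which gives $\phi_z \circ S = S \circ \phi_z$).

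The base cases are short. For $y = 1$, the left-hand side equals $\phi_z(\epsilon(x)1) = \epsilon(x)\epsilon(z)1$, and the right-hand side equals $\epsilon(\phi_{z\1}(x))\epsilon(z\2)1 = \epsilon(z\1)\epsilon(x)\epsilon(z\2)1 = \epsilon(z)\epsilon(x)1$, using the counit identity from Lemma \ref{lem:deltaphi}. For $y = a \in T$, since $\phi_x(a) \in T$ we may use the primitive formula $\phi_z(\phi_x(a)) = S(z\1)(\phi_x(a)\,z\2)$, then expand $\phi_x(a) = S(x\1)(a x\2)$ and apply $\phi_{z\1}$ multiplicatively to obtain
\begin{equation*}
\phi_z\phi_x(a) = \phi_{z\1}(S(x\1)) \cdot \bigl(\phi_{z\2}(a)\cdot \phi_{z\3}(x\2)\bigr) = S(\phi_{z\1}(x\1))\bigl(\phi_{z\2}(a)\cdot \phi_{z\3}(x\2)\bigr),
\end{equation*}
where the last equality uses Lemma \ref{lem:SlS}. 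On the other side, since $\phi_{z\2}(a) \in T$, the primitive formula gives
\begin{equation*}
\phi_{\phi_{z\1}(x)}\phi_{z\2}(a) = S\bigl(\phi_{z\1}(x)\1\bigr)\bigl(\phi_{z\2}(a)\cdot \phi_{z\1}(x)\2\bigr),
\end{equation*}
which by Lemma \ref{lem:deltaphi} and a relabeling of iterated coproducts equals $S(\phi_{z\1}(x\1))(\phi_{z\3}(a)\cdot \phi_{z\2}(x\2))$. Cocommutativity, which permits us to swap the Sweedler factors $z\2$ and $z\3$ appearing in each of the two expressions, matches them up.

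For the inductive step, assume the identity holds on $y$ and $w$. Applying (\ref{eq:fa}) twice yields
\begin{equation*}
\phi_z\phi_x(yw) = \phi_{z\1}\phi_{x\1}(y)\cdot \phi_{z\2}\phi_{x\2}(w),
\end{equation*}
and invoking the induction hypothesis on each factor rewrites this as
\begin{equation*}
\phi_{\phi_{z\1}(x\1)}\phi_{z\2}(y)\cdot \phi_{\phi_{z\3}(x\2)}\phi_{z\4}(w).
\end{equation*}
In parallel, the right-hand side expands via (\ref{eq:fa}) together with the coproduct formula of Lemma \ref{lem:deltaphi} applied to $\phi_{z\1}(x)$ as
\begin{equation*}
\phi_{\phi_{z\1}(x)}\phi_{z\2}(yw) = \phi_{\phi_{z\1}(x\1)}\phi_{z\3}(y) \cdot \phi_{\phi_{z\2}(x\2)}\phi_{z\4}(w).
\end{equation*}
Cocommutativity swaps $z\2$ and $z\3$ in the iterated coproduct, reconciling the two expressions. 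The main obstacle is notational: one must be careful that the Sweedler indices in the LHS-expansion and RHS-expansion are related precisely by permutations that are legitimate under cocommutativity. Once that bookkeeping is in place, the argument is a direct verification.
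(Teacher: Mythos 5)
Your proof is correct and follows essentially the same route as the paper's: verify the identity on $1$ and on primitive elements $a \in T$ using $\phi_x(a)=S(x\1)(ax\2)$ together with $S\phi_z=\phi_z S$ from Lemma \ref{lem:SlS}, then extend to all of $U(T)$ via the multiplicativity rule (\ref{eq:fa}). The only difference is that you spell out the inductive step and the Sweedler-index bookkeeping (via Lemma \ref{lem:deltaphi} and cocommutativity) that the paper compresses into ``the result follows from (\ref{eq:fa})''.
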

\begin{proof}
Clearly, the identity in the statement holds when both sides are evaluated at $1$. For any $ a \in T$
\begin{eqnarray*}
\phi_z\phi_x(a) &=& \phi_z(S(x\1)(a x\2)) = \phi_{z\1}(S(x\1)) (\phi_{z\2}(a) \phi_{z\3}(x\2))\\
&=& S(\phi_{z\1}(x\1)) (\phi_{z\2}(a) \phi_{z\3}(x\2)) = \phi_{\phi_{z\1}(x)}\phi_{z\2}(a).
\end{eqnarray*}
Having proved that both maps coincide on the generators of $U(T)$, the result follows from (\ref{eq:fa}).
\end{proof}

\begin{lemma}\label{lem:phi_aut_dot}
Let $(T,[a,b,c])$ be a commutative automorphic Lie triple system. We have
\begin{displaymath}
\phi_x(y \cdot z) = \phi_{x\1}(y) \cdot \phi_{x\2}(z)
\end{displaymath}
for any $x,y,z \in \dot{U}(T)$.
\end{lemma}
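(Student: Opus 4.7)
The plan is a direct unwrapping of both sides using the defining formula $y \cdot z = y\1 \phi_{y\2}(z)$ for the dotted product in $\dot{U}(T)$, then matching the two results via Lemma~\ref{lem:flipphi} and Lemma~\ref{lem:deltaphi}.

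First I would rewrite the left-hand side. By the definition of $\cdot$, we have $y \cdot z = y\1\, \phi_{y\2}(z)$ (an ordinary product in $U(T)$), so
\begin{displaymath}
\phi_x(y \cdot z) = \phi_x\bigl(y\1\, \phi_{y\2}(z)\bigr).
\end{displaymath}
Applying the multiplicativity property (\ref{eq:fa}) of $\phi_x$ with respect to the $U(T)$-product gives $\phi_{x\1}(y\1)\, \phi_{x\2}(\phi_{y\2}(z))$. Then I would invoke Lemma~\ref{lem:flipphi} with the pair $(x\2, y\2)$ to flip the composition: $\phi_{x\2}\phi_{y\2}(z) = \phi_{\phi_{x\2}(y\2)}\phi_{x\3}(z)$, using coassociativity to label the Sweedler components as $x\1 \otimes x\2 \otimes x\3$. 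This yields
\begin{displaymath}
\phi_x(y \cdot z) = \phi_{x\1}(y\1)\, \phi_{\phi_{x\2}(y\2)}\bigl(\phi_{x\3}(z)\bigr).
\end{displaymath}

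Next I would unwrap the right-hand side. By the definition of $\cdot$ applied to $\phi_{x\1}(y) \in \dot{U}(T)$ and $\phi_{x\2}(z)$,
\begin{displaymath}
\phi_{x\1}(y) \cdot \phi_{x\2}(z) = \phi_{x\1}(y)\1 \; \phi_{\phi_{x\1}(y)\2}\bigl(\phi_{x\2}(z)\bigr).
\end{displaymath}
Now Lemma~\ref{lem:deltaphi} tells us $\Delta(\phi_{x\1}(y)) = \phi_{x\1}(y\1) \otimes \phi_{x\2}(y\2)$ (again after relabeling by coassociativity). Substituting this into the previous expression transforms the right-hand side into
\begin{displaymath}
\phi_{x\1}(y\1)\, \phi_{\phi_{x\2}(y\2)}\bigl(\phi_{x\3}(z)\bigr),
\end{displaymath}
which is exactly the expression obtained for the left-hand side.

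The only real obstacle here is bookkeeping of the Sweedler indices under coassociativity, which is harmless once one fixes the convention $\Delta^{(2)}(x) = x\1 \otimes x\2 \otimes x\3$ throughout. All the substantive work has already been done in Lemma~\ref{lem:flipphi} (which encodes how $\phi$ interacts with its own composition) and Lemma~\ref{lem:deltaphi} (which encodes how $\phi$ interacts with the comultiplication); once both are in hand, the identity is just a one-line matching.
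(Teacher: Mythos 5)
Your proof is correct and follows essentially the same route as the paper: unwrap the dotted product, apply the multiplicativity (\ref{eq:fa}) of $\phi_x$, then Lemma~\ref{lem:flipphi}, and recognize the result as the right-hand side via Lemma~\ref{lem:deltaphi}. The only difference is that you make explicit the appeal to Lemma~\ref{lem:deltaphi} in the final identification, which the paper leaves implicit.
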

\begin{proof}
By the definition of the product of $\dot{U}(T)$ and Lemma~\ref{lem:flipphi} we have
\begin{eqnarray*}
\phi_x(y \cdot z) &=& \phi_x(y\1 \phi_{y\2}(z)) = \phi_{x\1}(y\1) \phi_{x\2}\phi_{y\2}(z) \\&=& \phi_{x\1}(y\1) \phi_{\phi_{x\2}(y\2)}\phi_{x\3}(z) =\phi_{x\1}(y) \cdot \phi_{x\2}(z)
\end{eqnarray*}
\end{proof}

\begin{lemma}
\label{lem:laut}
Let $(T,[a,b,c])$ be a commutative automorphic Lie triple system. For every $x,y,z \in U(T)$ we have 
\begin{displaymath}
 l(x,y) \phi_z = \phi_{l(x\1,y\1)(z) }l(x\2,y\2).
\end{displaymath}
\end{lemma}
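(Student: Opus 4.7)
The plan is to verify the operator identity by reducing to a generating set of $U(T)$ via a multiplicativity argument, in the same spirit as the proofs of Lemmas \ref{lem:flipphi} and \ref{lem:phi_aut_dot}.

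First, I would record the ``$\Delta$--comultiplicativity'' of $l$,
\begin{displaymath}
\Delta(l(u,v)(z)) = l(u\1,v\1)(z\1) \otimes l(u\2,v\2)(z\2),
\end{displaymath}
which mirrors Lemma \ref{lem:deltaphi} for $\phi$. It follows from the definition $l(u,v)(z) = (u\1 v\1)\backslash(u\2(v\2 z))$ combined with the standard identities $\Delta(ab)=\Delta(a)\Delta(b)$ and $\Delta(a\backslash b)=a\1\backslash b\1\otimes a\2\backslash b\2$ already invoked from \cite{PI07}.

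Second, I would show that both sides of the claim, viewed as functions of $w$, share the Sweedler--multiplicativity pattern. Writing $H_{x,y,z}(w)$ for either side, a direct calculation using (\ref{eq:fa}) on $\phi$, (\ref{eq:lCA}) on $l$, and the comultiplicativity above yields
\begin{displaymath}
H_{x,y,z}(w_1 w_2) = H_{x\1,y\1,z\1}(w_1)\cdot H_{x\2,y\2,z\2}(w_2),
\end{displaymath}
after permuting Sweedler indices of $x$ and $y$ via the cocommutativity of $\Delta$. Since $U(T)$ is generated as a unital algebra by $T$, this reduces the proof to checking the identity at $w=1$ and at $w=a\in T$.

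Third, I would verify these two base cases. For $w=1$ both sides collapse to $\epsilon(x)\epsilon(y)\epsilon(z)\cdot 1$, using $\phi_z(1)=\epsilon(z)1$ and $l(x,y)(1)=\epsilon(x)\epsilon(y)1$. For $w=a\in T$, on the left I expand $\phi_z(a)=S(z\1)(a z\2)$, distribute $l(x,y)$ across the product by (\ref{eq:lCA}), and pull $S$ outside via Lemma \ref{lem:SlS}, obtaining
\begin{displaymath}
S\bigl(l(x\1,y\1)(z\1)\bigr)\bigl(l(x\2,y\2)(a)\cdot l(x\3,y\3)(z\2)\bigr).
\end{displaymath}
On the right, since $l(x\2,y\2)(a)\in T$, I apply the definition of $\phi$ on primitives, expand $\Delta(l(x\1,y\1)(z))$ using the comultiplicativity of $l$ recorded in the first step, and arrive at an expression that matches the left-hand side after a cocommutativity-driven relabeling of the Sweedler indices of $x$ and $y$.

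The main obstacle is the Sweedler--index bookkeeping and the careful use of cocommutativity to realign the indices on the two sides; the underlying computations are mechanical once this scheme is in place, and follow the same template as the proofs of Lemmas \ref{lem:flipphi} and \ref{lem:phi_aut_dot}.
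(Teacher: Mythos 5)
Your proposal is correct and follows essentially the same route as the paper: the key computation at $w=a\in T$ (expanding $\phi_z(a)=S(z\1)(az\2)$, distributing $l(x,y)$ via (\ref{eq:lCA}) and commuting $S$ past $l$ via Lemma \ref{lem:SlS}) is exactly the paper's, and the extension from $T$ to all of $U(T)$ via (\ref{eq:fa}) and (\ref{eq:lCA}) is what the paper invokes in its closing sentence. You merely make explicit the Sweedler-multiplicativity bookkeeping and the comultiplicativity of $l$ that the paper leaves implicit.
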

\begin{proof}
The equality holds when both sides are evaluated at $1$. Given $a \in T$, 
\begin{eqnarray*}
l(x,y) \phi_z(a) &=& l(x,y)(S(z\1(a z\2)))
\\
&\stackrel{\langle 1 \rangle}{=}& l(x\1,y\1)\left(S(z\1)\right)\left(l(x\2,y\2)(a)l(x\3,y\3)(z\2)\right)\\
&\stackrel{\langle 2 \rangle}{=}& S\left(l(x\1,y\1)(z\1)\right)\left(l(x\2,y\2)(a)l(x\3,y\3)(z\2)\right)\\
&=& \phi_{l(x\1,y\1)(z)}l(x\2,y\2)(a)
\end{eqnarray*}
where $\langle 1 \rangle$ follows from (\ref{eq:lCA}) and $\langle 2 \rangle$ follows from Lemma \ref{lem:SlS}. Since $T$ generates $U(T)$ as a unital algebra, we can conclude the proof by (\ref{eq:fa}) and (\ref{eq:lCA}).
\end{proof}

\begin{lemma} \label{lem:l_aut_dot}
Let $(T,[a,b,c])$ be a commutative automorphic Lie triple system. For every $x,y,w, z \in \dot{U}(T)$ we have
\begin{displaymath}
l(x,y)(w \cdot z) = l(x\1,y\1)(w) \cdot l(x\2,y\2)(z).
\end{displaymath}
\end{lemma}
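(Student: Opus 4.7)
The plan is to convert the $\dot{U}(T)$-product back into the Bruck product of $U(T)$ via $u \cdot v = u\1 \phi_{u\2}(v)$, apply the left automorphic property~(\ref{eq:lCA}) of the underlying Bruck Hopf algebra together with Lemma~\ref{lem:laut}, and then reassemble the resulting expression as a $\dot{U}(T)$-product. All of the algebraic content is already packaged in those two ingredients; what remains is essentially Sweedler bookkeeping.

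First, I would expand $l(x,y)(w \cdot z) = l(x,y)(w\1 \phi_{w\2}(z))$ and apply (\ref{eq:lCA}) in $U(T)$ to move $l(x,y)$ across the juxtaposition, obtaining $l(x\1,y\1)(w\1)\, l(x\2,y\2)(\phi_{w\2}(z))$. Next, invoking Lemma~\ref{lem:laut} on the second factor and splitting $x\2, y\2$ via coassociativity, the expression becomes
\[
l(x\1,y\1)(w\1)\, \phi_{l(x\2,y\2)(w\2)}\bigl(l(x\3,y\3)(z)\bigr),
\]
still as a juxtaposition in $U(T)$.

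Second, to reassemble the right-hand side $l(x\1,y\1)(w) \cdot l(x\2,y\2)(z)$ via the definition of $\cdot$, I need the coalgebra compatibility
\[
\Delta\bigl(l(x,y)(w)\bigr) = l(x\1,y\1)(w\1) \otimes l(x\2,y\2)(w\2).
\]
This follows by applying $\Delta$ to the linearized expression $l(x,y)(w) = (x\1 y\1) \backslash (x\2(y\2 w))$, using that $\Delta$ is a homomorphism for both the product and the left division (the latter fact being recorded explicitly in the excerpt as $\Delta(u \backslash v) = u\1 \backslash v\1 \otimes u\2 \backslash v\2$), and then matching up Sweedler indices using coassociativity and cocommutativity. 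With this compatibility in hand, $u \cdot v = u\1 \phi_{u\2}(v)$ applied to $u = l(x\1,y\1)(w)$ and $v = l(x\2,y\2)(z)$ unfolds into exactly
\[
l(x\1,y\1)(w\1)\, \phi_{l(x\2,y\2)(w\2)}\bigl(l(x\3,y\3)(z)\bigr),
\]
agreeing with the left-hand side.

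The main obstacle is nothing more than carefully tracking Sweedler indices through two successive splittings of $x$ and $y$ and one splitting of $w$; the symmetry afforded by cocommutativity of $\Delta$ is what makes the two resulting expressions literally equal rather than merely equal after reordering. No further algebraic identity beyond (\ref{eq:lCA}), Lemma~\ref{lem:laut}, and the compatibility of $\Delta$ with products and divisions is required.
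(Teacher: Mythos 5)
Your proof is correct and follows essentially the same route as the paper: expand $w\cdot z = w\1\phi_{w\2}(z)$, apply (\ref{eq:lCA}), then Lemma~\ref{lem:laut}, and reassemble via the definition of the dot product. The only difference is that you explicitly justify the comultiplication compatibility $\Delta\bigl(l(x,y)(w)\bigr) = l(x\1,y\1)(w\1)\otimes l(x\2,y\2)(w\2)$ needed for the final reassembly, a detail the paper leaves implicit.
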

\begin{proof}
It is enough to observe that
\begin{eqnarray*}
l(x,y)(w\cdot z) &=& l(x,y)(w\1 \phi_{w\2}(z)) \\
&\stackrel{\langle 1 \rangle}{=}& l(x\1,y\1)(w\1) l(x\2,y\2)(\phi_{w\2}(z))\\
&\stackrel{\langle 2 \rangle}{=}& l(x\1,y\1)(w\1)\phi_{l(x\2,y\2)(w\2)}(l(x\3,y\3)(z))\\
&=& l(x\1,y\1)(w) \cdot l(x\2,y\2)(z)
\end{eqnarray*}
where $\langle 1 \rangle$ follows from (\ref{eq:lCA}) and  $\langle 2 \rangle$ follows from Lemma \ref{lem:laut}.
\end{proof}

The set $\Hom_{\field}(U(T), \Endo_{\field}(U(T)))$ is an associative algebra with the convolution product
\begin{displaymath}
(f*g)_x= f_{x\1} g_{x\2}
\end{displaymath}
where $f_x \in \Endo_{\field}(U(T))$ stands for the image of $x \in U(T)$ under the map $f \in \Hom_{\field}(U(T) ,\Endo_{\field}(U(T)))$. The unit element of this associative algebra is the map $x \mapsto \epsilon(x) \Id$. It is easy to prove the existence of left and right inverses (that must coincide) of $\phi\colon x \mapsto \phi_x$ in $\Hom_{\field}(U(T),\Endo_{\field}(U(T)))$. This inverse will be denote by $\phi'$. The existence of $\phi'$ can be obtained  inductively by $\phi'_1 := \Id$ and $\phi'_{x\1 a} \phi_{x\2} + \phi'_{x\1}\phi_{x\2 a} = 0$ for any $x \in U(T)$ and $a \in T$, which gives the formula for $\phi'_{xa}$ in terms of previously defined maps. Thus
\begin{displaymath}
\phi'_{x\1}\phi_{x\2} (y)= \epsilon(x) y = \phi_{x\1}\phi'_{x\2}(y)
\end{displaymath}
for any $x,y \in U(T)$.

\begin{lemma}\label{lem:phi_prime_aut_dot}
Let $(T,[a,b,c])$ be a commutative automorphic Lie triple system. For every $x,y,z \in U(T)$ we have
\begin{enumerate}
\item $\phi'_{x}\phi'_{y} = \phi'_{\phi'_{x\1}(y)}\phi'_{x\2}$,
\item $\phi'_x(yz) = \phi'_{x\1}(y) \phi'_{x\2}(z)$ and
\item $\phi'_x(y \cdot z) = \phi'_{x\1}(y) \cdot \phi'_{x\2}(z)$.
\end{enumerate}
\end{lemma}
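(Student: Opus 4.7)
The plan is to exploit uniqueness of convolution inverses in three appropriately chosen convolution algebras, using the identities already established for $\phi$ together with $\phi*\phi'=\epsilon\cdot\Id$. The guiding principle is that each of the three identities in the statement is the analogue for $\phi'$ of a structural identity previously established for $\phi$, and these structural identities are preserved by convolution inversion.

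For part~(2) I introduce the subset $\mathcal{M}\subseteq\Hom_\field(U(T),\Endo_\field(U(T)))$ of \emph{measurings}: those $f$ with $f(x)(1)=\epsilon(x)1$ and $f(x)(yz)=f(x\1)(y)\,f(x\2)(z)$. First I verify that $\mathcal{M}$ is closed under convolution: expanding $(f*g)(x)(yz)$ using the measuring property of $g$ and then of $f$ yields a four-fold Sweedler expression which, by cocommutativity of $U(T)$, rearranges to $(f*g)(x\1)(y)(f*g)(x\2)(z)$. Second I show that $\mathcal{M}$ is closed under convolution inversion: given $f\in\mathcal{M}$ with inverse $f^{-1}$, form the defect $h(x)(y\otimes z):=f^{-1}(x)(yz)-f^{-1}(x\1)(y)f^{-1}(x\2)(z)$, evaluate $f(x\1)\bigl(h(x\2)(y\otimes z)\bigr)$ using the measuring property of $f$ together with cocommutativity to see that both resulting summands collapse to $\epsilon(x)yz$ and cancel, and then compose on the left with $f^{-1}$ and use $f^{-1}*f=\epsilon\cdot\Id$ to isolate $h=0$. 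Specialising to $f=\phi$, which lies in $\mathcal{M}$ by Proposition~\ref{prop:autphi}, yields~(2). Part~(3) is obtained by running the identical argument with the product of $U(T)$ replaced throughout by the product $\cdot$ of $\dot U(T)$, starting from Lemma~\ref{lem:phi_aut_dot}.

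For part~(1) I pass to $\Hom_\field(U(T)\otimes U(T),\Endo_\field(U(T)))$ with convolution $(F\star G)(z\otimes x):=F(z\1\otimes x\1)\circ G(z\2\otimes x\2)$, and consider $\Phi(z\otimes x):=\phi_z\phi_x$, which by Lemma~\ref{lem:flipphi} also equals $\phi_{\phi_{z\1}(x)}\phi_{z\2}$. A short calculation exploiting $\phi*\phi'=\epsilon\cdot\Id$ shows that $(z\otimes x)\mapsto\phi'_x\phi'_z$ is the convolution inverse of $\Phi$. I then verify that $(z\otimes x)\mapsto\phi'_{\phi'_{x\1}(z)}\phi'_{x\2}$ is \emph{also} a convolution inverse of $\Phi$; by uniqueness of convolution inverses these two candidates coincide, and the resulting identity $\phi'_x\phi'_z=\phi'_{\phi'_{x\1}(z)}\phi'_{x\2}$, after renaming the dummy variable $z$ as $y$, is exactly~(1). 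Concretely this second verification reduces to showing
\[\phi_{z\1}\phi_{x\1}\phi'_{\phi'_{x\2}(z\2)}\phi'_{x\3}=\epsilon(z)\epsilon(x)\Id,\]
which one plans to obtain by simultaneously applying Lemma~\ref{lem:flipphi} to reverse-combine the leading pair of $\phi$-factors, the cancellation $\phi_{w\1}\phi'_{w\2}=\epsilon(w)\Id$ across the $\phi\phi'$-boundary, and cocommutativity of $U(T)$ to shuffle the iterated Sweedler indices so that the required pairings become adjacent.

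The main obstacle is this last Sweedler calculation in part~(1). Unlike parts~(2) and~(3), which fit into a transparent ``measuring closed under inversion'' template, the quadratic identity of Lemma~\ref{lem:flipphi} forces one to juggle four operators interleaving $\phi$'s and $\phi'$'s with indices from iterated comultiplications of both $x$ and $z$. Keeping track of which indices must be paired for each cancellation to fire, while invoking cocommutativity only where legitimate, is the technical heart of the argument.
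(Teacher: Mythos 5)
Your proposal is correct in substance but packages the argument quite differently from the paper. The paper's proof is a three-line direct computation for part~(2), namely
\begin{displaymath}
\phi'_{x\1}(y)\phi'_{x\2}(z) = \phi'_{x\1}\bigl(\phi_{x\2}(\phi'_{x\3}(y)\phi'_{x\4}(z))\bigr) = \phi'_{x\1}\bigl(\phi_{x\2}\phi'_{x\3}(y)\,\phi_{x\4}\phi'_{x\5}(z)\bigr) = \phi'_x(yz),
\end{displaymath}
i.e.\ sandwich with $\phi'\ast\phi=\epsilon\,\Id$, apply the known multiplicativity of $\phi$, and cancel; the remaining parts are declared to ``follow in a similar way.'' Your measuring/convolution-inverse framework for parts~(2) and~(3) is exactly this cancellation written as ``the set of measurings is closed under convolution inversion,'' so it is a legitimate and arguably cleaner abstraction, at the cost of more setup. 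For part~(1) your two-variable convolution algebra with $\Phi(z\otimes x)=\phi_z\phi_x$ and uniqueness of inverses does work, but two points deserve attention. First, the final Sweedler shuffle tacitly needs $\Delta(\phi'_x(y))=\phi'_{x\1}(y\1)\otimes\phi'_{x\2}(y\2)$, which the paper records only for $\phi$ (Lemma~\ref{lem:deltaphi}); it follows for $\phi'$ by the same uniqueness-of-convolution-inverse argument, but you should say so. Second, the identity you propose to verify, $\Phi\star\Xi=\epsilon\epsilon\,\Id$ with $\Phi$ on the left, is the awkward side: applying Lemma~\ref{lem:flipphi} to the leading pair $\phi_{z\1}\phi_{x\1}$ does not lead to clean cancellations. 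The computation goes through smoothly on the other side: convolving Lemma~\ref{lem:flipphi} with $\phi'$ yields the commutation rule $\phi'_x\phi_w=\phi_{\phi'_{x\1}(w)}\phi'_{x\2}$, whence
\begin{displaymath}
\phi'_{\phi'_{x\1}(z\1)}\phi'_{x\2}\phi_{z\2}\phi_{x\3}=\phi'_{\phi'_{x\1}(z\1)}\phi_{\phi'_{x\2}(z\2)}\,\phi'_{x\3}\phi_{x\4}=\epsilon(z)\epsilon(x)\Id,
\end{displaymath}
and a one-sided inverse suffices since $\Phi$ already has the two-sided inverse $(z\otimes x)\mapsto\phi'_x\phi'_z$. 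With that adjustment your argument is complete and proves the same statement the paper obtains more economically.
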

\begin{proof}
To prove the first equality we observe that 
\begin{eqnarray*}
\phi'_{x\1}(y)\phi'_{x\2}(z) &=& \phi'_{x\1}(\phi_{x\2}(\phi'_{x\3}(y)\phi'_{x\4}(z))) \\
&=& \phi'_{x\1}(\phi_{x\2}\phi'_{x\3}(y)\phi_{x\4}\phi'_{x_{(5)}}(z)) = \phi'_{x}(y z).
\end{eqnarray*} 
The other identities follow in a similar way.
\end{proof}

\begin{lemma}
\label{lem:formulaldot}
Let $(T,[a,b,c])$ be a commutative automorphic Lie triple system. For every $x,y \in U(T)$ we have
\begin{displaymath}
\dot{l}(x,y) = \phi'_{x\1 \cdot y\1} l(x\2, \phi_{x\3}(y\2))\phi_{x\4} \phi_{y\3}.
\end{displaymath}
\end{lemma}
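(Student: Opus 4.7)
The plan is to reduce both sides of the identity to expressions in the Bruck product and the maps $\phi, \phi'$, and then match them by cocommutativity of $\Delta$. The first step is to establish the auxiliary formula
\[
u \dot{\backslash} v = \phi'_{u\1}(u\2 \backslash v)
\]
for all $u, v \in U(T)$. Since $u \cdot w = u\1 \phi_{u\2}(w)$ by (\ref{eq:CAUEA}), the map $u \mapsto \dot{L}_u$ factors in $\Hom_{\field}(U(T), \Endo_{\field}(U(T)))$ as the convolution $L * \phi$, where $L_u(w) = uw$ is left multiplication in the Bruck algebra; its convolution inverse is therefore $\phi' * L'$ with $L'_u(v) := u \backslash v$, which is exactly the displayed formula. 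One may verify directly that $u\1 \cdot \phi'_{u\2}(u\3 \backslash v) = u\1 \phi_{u\2}\phi'_{u\3}(u\4 \backslash v) = u\1 \epsilon(u\2)(u\3 \backslash v) = \epsilon(u)v$, using the convolution-inverse identity $\phi_{w\1}\phi'_{w\2} = \epsilon(w)\Id$ together with the Bruck-algebra relation $u\1(u\2 \backslash v) = \epsilon(u) v$.

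Next I apply the auxiliary formula with $u := x\1 \cdot y\1$. Since $\Delta$ is an algebra homomorphism for $\cdot$, one has $\Delta(x\1 \cdot y\1) = (x\1 \cdot y\1) \otimes (x\2 \cdot y\2)$ in the refined Sweedler expansion, so that
\[
\dot{l}(x,y)(z) = \phi'_{x\1 \cdot y\1}\bigl((x\2 \cdot y\2) \backslash (x\3 \cdot (y\3 \cdot z))\bigr).
\]
I then expand every remaining $\cdot$-product on the right via $u \cdot v = u\1 \phi_{u\2}(v)$ and push the $\phi$'s through Bruck products using (\ref{eq:fa}). After performing the induced index shifts, the argument of $\phi'_{x\1 \cdot y\1}$ becomes
\[
\bigl(x\2\,\phi_{x\3}(y\2)\bigr) \backslash \Bigl(x\4\,\bigl(\phi_{x\5}(y\3)\,\phi_{x\6}(\phi_{y\4}(z))\bigr)\Bigr).
\]

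On the other hand, the proposed right-hand side expression $l(x\2, \phi_{x\3}(y\2))\bigl(\phi_{x\4}(\phi_{y\3}(z))\bigr)$ is expanded through the definition $l(A,B)(C) = (A\1 B\1)\backslash(A\2(B\2 C))$, using Lemma~\ref{lem:deltaphi} to split $\Delta(\phi_{x\3}(y\2)) = \phi_{x\3\1}(y\2\1) \otimes \phi_{x\3\2}(y\2\2)$. The result has exactly the same shape as the expression above, except that the Sweedler indices labelled $(3)$ and $(4)$ of $x$ are interchanged; cocommutativity of $\Delta$ then identifies the two sums. The only real obstacle is the notational bookkeeping: each application of the coproduct silently refines the Sweedler indices, and one must keep track of these renumberings in order to recognize that the discrepancy between the two fully expanded expressions is precisely a cocommutative transposition of two $x$-indices.
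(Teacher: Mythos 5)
Your proof is correct, but it runs in the opposite direction from the paper's. The paper verifies the identity by multiplying the proposed right-hand side (applied to $z$) back by $x\1 \cdot y\1$ with respect to $\cdot$: using $\phi_{u\1}\phi'_{u\2}=\epsilon(u)\Id$, the defining property of $l$, and (\ref{eq:fa}), the product collapses step by step to $x\cdot(y\cdot z)$, and then ``dividing on the left'' via $u\1\,\dot{\backslash}\,(u\2 \cdot w)=\epsilon(u)w$ identifies the right-hand side with $\dot{l}(x,y)(z)$; no expansion of any division is ever needed. You instead argue forward: you first prove the auxiliary formula $u\,\dot{\backslash}\,v=\phi'_{u\1}(u\2\backslash v)$ by noting that $\dot{L}=L*\phi$ in the convolution algebra, so its two-sided convolution inverse is $\phi'*L'$ --- a clean structural fact that the paper never states explicitly --- and then expand both sides of the lemma completely and observe that they differ only by a transposition of two Sweedler indices of $x$, which cocommutativity renders harmless. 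I checked the expansions: the left side becomes $\phi'_{x\1\cdot y\1}\bigl[(x\2\phi_{x\3}(y\2))\backslash(x\4(\phi_{x\5}(y\3)\phi_{x\6}(\phi_{y\4}(z))))\bigr]$ and the right side $\phi'_{x\1\cdot y\1}\bigl[(x\2\phi_{x\4}(y\2))\backslash(x\3(\phi_{x\5}(y\3)\phi_{x\6}(\phi_{y\4}(z))))\bigr]$, so the discrepancy is exactly the swap $x\3\leftrightarrow x\4$ you describe. Your route buys an explicit description of the left division of $\dot{U}(T)$ in terms of that of $U(T)$, which is of independent interest; the paper's route is shorter because the ``multiply back and divide'' device keeps all indices aligned and avoids the bookkeeping. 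Both rest on the same ingredients ($\phi'$ as convolution inverse, identity (\ref{eq:fa}), and the bialgebra property of $\dot{U}(T)$ established just before), so the difference is one of organization rather than substance.
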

\begin{proof}
\begin{eqnarray*}
&&(x\1 \cdot y\1) \cdot \phi'_{x\2 \cdot y\2}\left( l(x\3, \phi_{x\4}(y\3)) (\phi_{x_{(5)}}(\phi_{y\4}(z))) \right) \\
&& \quad = (x\1 \cdot y\1)  l(x\2, \phi_{x\3}(y\2)) (\phi_{x\4}(\phi_{y\3}(z)))\\
&& \quad = x\1 (\phi_{x\2}(y\1)(\phi_{x\3}(\phi_{y\2}(z)))) = x\1 \phi_{x\2}(y\1 \phi_{y\2}(z)) \\
&& \quad = x \cdot (y \cdot z).
\end{eqnarray*}
Dividing on the left we get 
\begin{displaymath}
 \phi'_{x\1 \cdot y\1} l(x\2, \phi_{x\3}(y\2))\phi_{x\4} \phi_{y\3}(z) = (x\1 \cdot y\1) \dot{\backslash} (x\2 \cdot (y\2 \cdot z)) = \dot{l}(x,y)(z).
\end{displaymath}
\end{proof}

The description of $\dot{l}(x,y)$ in Lemma~\ref{lem:formulaldot} and the properties of $\phi_x$, $l(x,y)$ and $\phi'_x$ proved in lemmas \ref{lem:phi_aut_dot}, \ref{lem:l_aut_dot} and \ref{lem:phi_prime_aut_dot} lead to the desired commutative automorphic property of $\dot{U}(T)$.
\begin{proposition}
\label{prop:automorphicHopf}
Let $(T,[a,b,c])$ be a commutative automorphic Lie triple system. For every $x,y,w,z \in U(T)$ we have
\begin{eqnarray*}
\dot{l}(x,y)(wz) &=& \dot{l}(x\1,y\1)(w) \dot{l}(x\2,y\2)(z) \quad\text{and} \\
\dot{l}(x,y)(w \cdot z) &=& \dot{l}(x\1,y\1)(w) \cdot \dot{l}(x\2,y\2)(z).
\end{eqnarray*}
\end{proposition}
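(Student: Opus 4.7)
The plan is to plug the explicit description of $\dot{l}(x,y)$ given in Lemma~\ref{lem:formulaldot},
\[
\dot{l}(x,y) = \phi'_{x\1 \cdot y\1}\, l(x\2, \phi_{x\3}(y\2))\, \phi_{x\4}\, \phi_{y\3},
\]
into $\dot{l}(x,y)(wz)$ and $\dot{l}(x,y)(w\cdot z)$, and then propagate each product through the four maps in the composition, one at a time, using that each of them is multiplicative with respect to both products $uv$ and $u\cdot v$.

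For the first identity, I would start by applying $\phi_{y\3}$ to $wz$ via (\ref{eq:fa}), then $\phi_{x\4}$ to the result by (\ref{eq:fa}) again, then $l(x\2,\phi_{x\3}(y\2))$ by (\ref{eq:lCA}), and finally $\phi'_{x\1\cdot y\1}$ by part~(2) of Lemma~\ref{lem:phi_prime_aut_dot}. At each step the output splits as a product of two factors indexed by Sweedler subscripts, and the main bookkeeping task is to verify that after all four steps the factors reassemble as $\dot{l}(x\1,y\1)(w)\, \dot{l}(x\2,y\2)(z)$. This requires using that $\Delta$ commutes with all the maps involved: $\Delta\phi_x(y) = \phi_{x\1}(y\1)\otimes\phi_{x\2}(y\2)$ from Lemma~\ref{lem:deltaphi}, the analogous identity for $\phi'$ (obtained either directly from Lemma~\ref{lem:phi_prime_aut_dot}(2) applied to $\phi_{x\2}(y\1)\phi_{x\3}(y\2)$ or by convolution inversion), and $\Delta l(x,y)(z) = l(x\1,y\1)(z\1)\otimes l(x\2,y\2)(z\2)$, which follows from the general fact that left and right division in a coassociative cocommutative Hopf algebra are coalgebra maps.

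The proof of the second identity is formally identical, replacing (\ref{eq:fa}), (\ref{eq:lCA}), and Lemma~\ref{lem:phi_prime_aut_dot}(2) by Lemma~\ref{lem:phi_aut_dot}, Lemma~\ref{lem:l_aut_dot}, and Lemma~\ref{lem:phi_prime_aut_dot}(3) respectively: $\phi_{y\3}(w\cdot z) = \phi_{y\3}(w)\cdot\phi_{y\4}(z)$, then $\phi_{x\4}$ splits this into a $\cdot$-product by Lemma~\ref{lem:phi_aut_dot}, then $l(x\2,\phi_{x\3}(y\2))$ by Lemma~\ref{lem:l_aut_dot}, and finally $\phi'_{x\1\cdot y\1}$ by Lemma~\ref{lem:phi_prime_aut_dot}(3). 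The same collection step produces $\dot{l}(x\1,y\1)(w)\cdot \dot{l}(x\2,y\2)(z)$.

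The main obstacle, and really the only one, is notational: one has to apply coassociativity/cocommutativity to relabel the Sweedler indices after each step so that the two halves of the final expression match the factorization $\dot{l}(x\1,y\1)\otimes \dot{l}(x\2,y\2)$ coming from Lemma~\ref{lem:formulaldot} applied twice. Since each of the four constituent maps is multiplicative in the relevant product and respects $\Delta$, no new algebraic identity on $T$ is needed beyond those already used in Section~\ref{subsec:CAformalloos}; the commutative automorphic hypothesis has already done its work in proving the component lemmas.
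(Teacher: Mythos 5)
Your proposal is correct and follows essentially the same route as the paper, which likewise derives the proposition by substituting the factorization of $\dot{l}(x,y)$ from Lemma~\ref{lem:formulaldot} and pushing the product through each factor using (\ref{eq:fa}), (\ref{eq:lCA}) and Lemmas~\ref{lem:phi_aut_dot}, \ref{lem:l_aut_dot} and \ref{lem:phi_prime_aut_dot}; the paper in fact leaves exactly this Sweedler bookkeeping to the reader. Your identification of which multiplicativity lemma handles which factor, in each of the two identities, matches the intended argument.
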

\subsection{$\dot{U}(T)$ is commutative}
\label{subsec:commutative}
To prove that $\dot{U}(T)$ is commutative, i.e. $x \cdot y = y \cdot x$ for any $x, y \in \dot{U}(T)$ there is no loss of generality in assuming that $x = a^m$ and $y = b^n$ for some $a, b \in T$. This observation is crucial since it ensures that we can restrict our study to commutative automorphic Lie triple systems generated by two elements. In fact  it is enough to prove the result for the commutative automorphic Lie triple system $\T$ freely generated by $\{a,b\}$ (see Section~\ref{subsec:free_calts}) since the epimorphism $U(\T) \rightarrow U(T)$ provided by the universal property of $U(\T)$ induces an epimorphism $\dot{U}(\T) \rightarrow \dot{U}(T)$. To prove the commutativity of $\dot{U}(\T)$ we will compare some terms in the expansions of $a^m \cdot b^n$ and $b^n \cdot a^m$ as linear combinations of a Poincar\'e-Birkhoff-Witt basis of $U(\T)$. 

First we will establish some preliminary results. Fix the linear basis $B : = \{ a, b, [a,b,\stackrel{i}{\dots}a,\stackrel{j}{\dots} b] \mid i, j \geq 0, i + j \text{ odd}\, \}$ of $\T$ and the deg-lex order  with $a < b$  for this basis. Thus, $a < b <  [a,b,\stackrel{i}{\dots}a,\stackrel{j}{\dots} b]$ and $U(\T)$ has a Poincar\'e-Birkhoff-Witt linear basis of the form
\begin{displaymath}
B_{\PBW}:=\{ b_{1}(b_{2}(\cdots (b_{{l-1}} b_{l}))) \mid  b_{1}, \dots, b_{l}  \in B,  b_{1} \leq b_{2} \leq \cdots \leq b_{l}  \text{ and } l \geq 0 \}. 
\end{displaymath}
\begin{definition}
The \emph{length} $l(x)$ of $x:= b_{1}(b_{2}(\cdots (b_{{l-1}} b_{l}))) \in B_{\PBW}$ is $l$. The vector space spanned by all the elements in $B_{\PBW}$ of length $l$ will be denoted by $U(\T)_l$ (or $\dot{U}(\T)_l$). Clearly $U(\T)_1 = \T$. The \emph{primitive component} of $y \in U(\T)$ is the element $y_1$ in the expansion $y = y_0 + y_1 + y_2 + \cdots \in U(\T) = \oplus_{i=0}^\infty U(\T)_i$ where $y_i \in U(\T)_i$ for all $i$. To indicate that the primitive components of $x$ and $y$ are the same we will use the  notation $x \equiv y$.
\end{definition}

Our next lemma shows that the order in $ B \cap [\T,\T,\T]$ is irrelevant.
\begin{lemma}
Let $b_1(b_2(\cdots (b_{l-1}b_l)))$ be an element in $B_{\PBW}$. If $b_{k} \in  B \cap [\T,\T,\T]$ then  for any permutation $\sigma$ of $\{k,k+1,\dots, l\}$ we have
\begin{displaymath}
b_1(b_2(\cdots (b_{l-1}b_l))) = b_1(b_2(\cdots (b_{\sigma(k)}(\cdots(b_{\sigma(l-1)}b_{\sigma(l)})))))).
\end{displaymath}
\end{lemma}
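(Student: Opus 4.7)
The plan is to reduce the statement, by iterating adjacent transpositions $(i,i{+}1)$ for $i \in \{k,\dots,l-1\}$, to the single elementary identity
\[
	b_i(b_{i+1}Z) = b_{i+1}(b_iZ) \qquad \text{for every } Z \in U(\T),
\]
whenever the adjacent pair $b_i, b_{i+1}$ lies in $B \cap [\T,\T,\T]$. Observe first that since the deg-lex order places $a$ and $b$ strictly below every element of $B \cap [\T,\T,\T]$, the hypothesis $b_k \in B \cap [\T,\T,\T]$ together with the monotonicity $b_k \leq b_{k+1} \leq \cdots \leq b_l$ forces every $b_k, \dots, b_l$ to lie in $B \cap [\T,\T,\T]$, so each of these adjacent pairs has the required form. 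Once the elementary swap is known, we may propagate it through the outer left-multiplications $L_{b_1}, L_{b_2}, \dots, L_{b_{i-1}}$, which are linear operators on $U(\T)$, to obtain the desired identity for the whole PBW monomial; and since the transpositions $(i\ i{+}1)$ generate the symmetric group on $\{k,\dots,l\}$, this suffices for an arbitrary permutation $\sigma$.

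To prove the elementary swap, rewrite it as $[L_u, L_v](Z) = 0$ for all $Z \in U(\T)$, with $u := b_i$ and $v := b_{i+1}$ primitive elements in $[\T, \T, \T]$. By (\ref{eq:der}) the commutator $[L_u, L_v]$ is a derivation of $U(\T)$; since $U(\T)$ is generated as a unital algebra by the primitive subspace $\T$, any derivation vanishes on all of $U(\T)$ as soon as it vanishes on $\T$, because the kernel of a derivation is a unital subalgebra by the Leibniz rule. The evaluation on a generator $c \in \T$ is
\[
	[L_u, L_v](c) \;=\; u(vc) - v(uc) \;=\; [u,v,c],
\]
and the triple $\{u,v,c\}$ contains two elements from $[\T,\T,\T]$, so Corollary~\ref{cor:zero} yields $[u,v,c] = 0$. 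Therefore $[L_u, L_v]$ vanishes on all of $U(\T)$, which establishes the elementary swap.

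The argument has no essential obstacle: the derivation property (\ref{eq:der}) combined with Corollary~\ref{cor:zero} is precisely what collapses the apparent non-associativity of $U(\T)$ into a commutativity statement for primitive elements lying in $[\T,\T,\T]$. The only minor point requiring care is the passage from vanishing on $\T$ to vanishing on $U(\T)$ for the non-associative derivation $[L_u,L_v]$, which is settled by the Leibniz-rule observation above.
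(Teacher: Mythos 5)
Your proof is correct and follows essentially the same route as the paper: both arguments reduce to swapping adjacent factors, use that $[L_{b_i},L_{b_{i+1}}]$ is a derivation of $U(\T)$ via (\ref{eq:der}), and kill the resulting triple products $[b_i,b_{i+1},\,\cdot\,]$ because two of the arguments lie in $[\T,\T,\T]$ (the paper invokes Corollary~\ref{cor:solvable} where you invoke Corollary~\ref{cor:zero}, but these are interchangeable here). Your packaging of the Leibniz expansion as the statement that a derivation vanishing on the generating set $\T$ vanishes identically is a clean, equivalent way of phrasing the paper's term-by-term computation.
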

\begin{proof}
Since $b_{k} \in B \cap [\T,\T,\T]$ then $b_{k+1},\dots, b_l \in B \cap [\T,\T,\T]$ and  $b_1(b_2(\cdots (b_{l-1}b_l)))$ equals
\begin{eqnarray*}
 && b_1(\cdots( b_{k+1}(b_{k}(b_{k+2}\cdots(b_{l-1}b_l))))) \\  
&& \quad +\sum_{i=k+2}^l b_1(\cdots (b_{k-1}(b_{k+2}(\cdots( [b_{k},b_{k+1},b_i](\cdots (b_{l-1}b_l)))))))
\end{eqnarray*}
where we have used (\ref{eq:Lts}) and (\ref{eq:der}).  By Corollary \ref{cor:solvable} we get
\begin{displaymath}
  b_1(\cdots( b_{k}(b_{k+1}(b_{k+2}\cdots(b_{l-1}b_l))))) = b_1(\cdots( b_{k+1}(b_{k}(b_{k+2}\cdots(b_{l-1}b_l)))))
\end{displaymath}
if $b_k \in B \cap [\T,\T,\T]$. Since $b_{k+1},\dots, b_l $ also belong to $B \cap [\T,\T,\T]$ the result follows.
\end{proof}

The basic tool to compute primitive components is the next lemma.
\begin{lemma}
\label{lem:primitivecomponentzero}
Let $a_1,\dots, a_l $ be elements in $ \T$. If $a_k \in [\T,\T,\T]$  for some $k >1$ then the primitive component of $a_1(a_2(\cdots(a_{l-1}a_l)))$ is zero.
\end{lemma}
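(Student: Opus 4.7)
The plan is strong induction on $l$. By linearity I may assume each $a_i$ lies in the PBW basis $B = B_V \cup B_W$ of $\T$ with $B_V = \{a,b\}$ and $B_W = B \cap [\T,\T,\T]$ (ordered so that $B_V < B_W$, as in Theorem~\ref{thm:basis}); call elements of $\sset$ \emph{$V$-factors} and elements of $[\T,\T,\T]$ \emph{$W$-factors}. The hypothesis becomes: some $a_k$ is a $W$-factor with $k > 1$.

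I would first dispatch the case when $l$ is even via the antipode. Since $U(\T)$ is a left Bruck Hopf algebra, $S$ is an algebra automorphism with $S|_\T = -\Id$, so it acts as multiplication by $(-1)^l$ on every length-$l$ PBW basis element; in particular $S$ preserves the PBW length decomposition and $S \circ P = -P$, where $P\colon U(\T) \to \T$ extracts the primitive component. Thus for even $l$ the identity $S(a_1(\cdots a_l)) = (-1)^l a_1(\cdots a_l) = a_1(\cdots a_l)$ forces $P(a_1(\cdots a_l)) = -P(a_1(\cdots a_l))$, hence $P(a_1(\cdots a_l)) = 0$.

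For odd $l \geq 3$ I would sort $(a_1, \dots, a_l)$ into PBW order by successive adjacent swaps, using the basic identity $a_i(a_{i+1} z) = a_{i+1}(a_i z) + [a_i, a_{i+1}, z]$ together with the derivation property (\ref{eq:der}) of $[L_{a_i}, L_{a_{i+1}}]$. This rewrites $a_1(\cdots a_l)$ as the fully sorted PBW basis element of length $l \geq 3$ (which has vanishing primitive component) plus a sum of correction products of length $l - 2$, each obtained by replacing some $a_j$ with $j \geq i+2$ by the triple bracket $[a_i, a_{i+1}, a_j]$, the sum over $j$ coming from the derivation expansion of $[a_i, a_{i+1}, a_{i+2}(\cdots a_l)]$.

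The crux is that every nonvanishing correction inherits the hypothesis, so strong induction yields $P(\text{correction}) = 0$. Corollary~\ref{cor:solvable} kills the summand whenever $a_j \in B_W$ (third slot in $[\T,\T,\T]$), and Corollary~\ref{cor:zero} kills it whenever both $a_i, a_{i+1} \in B_W$. In the remaining cases one checks $[a_i, a_{i+1}, a_j] \in W$, contributing a new $W$-factor at position $j - 2$ of the correction, while each surviving original $W$-factor $a_{k'}$ sits at position $k'$ or $k' - 2$; a short case analysis shows a $W$-factor at position $\geq 2$ always exists. The main obstacle is the edge case $i = 1, j = 3$, where the new $W$-factor would land at position $1$: here I would exploit that the ordering $B_V < B_W$ forbids the swap $(1,2)$ unless $a_1, a_2$ lie in the same block, so combining this constraint with the hypothesis $a_k \in B_W$ for some $k > 1$ either forces vanishing (by Corollary~\ref{cor:solvable} or \ref{cor:zero}) or produces a further $W$-factor $a_{k'}$ with $k' \geq 4$, whose position $k' - 2 \geq 2$ in the correction supplies the needed $W$-factor at position $> 1$, closing the induction.
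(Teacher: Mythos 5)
Your core mechanism --- commuting adjacent factors via $L_{a_i}L_{a_{i+1}} = L_{a_{i+1}}L_{a_i} + [L_{a_i},L_{a_{i+1}}]$, using that the commutator is a derivation producing length-$(l-2)$ corrections, and killing bad corrections with Corollaries~\ref{cor:zero} and~\ref{cor:solvable} --- is exactly the paper's (the antipode argument for even $l$ is correct but unnecessary). The difference is the induction variable, and that is where the gap sits. Because you induct on $l$, every nonzero correction must itself satisfy the hypothesis of the lemma, i.e.\ carry a factor from $[\T,\T,\T]$ at position $>1$. You correctly isolate the dangerous case $i=1$, $j=3$, where the new bracket $[a_1,a_2,a_3]$ lands at position~$1$, but your resolution rests on the claim that the order $B_V<B_W$ ``forbids the swap $(1,2)$ unless $a_1,a_2$ lie in the same block,'' which is false: if $a_1\in B_W$ and $a_2\in B_V$ then $a_1>a_2$ and the swap is mandatory. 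Moreover, after the first pass the swaps act on \emph{permuted} sequences, so you must also rule out the scenario in which the word contains a single $[\T,\T,\T]$-factor, originally at some position $k>1$, that has migrated to position~$1$ by the time a $(1,2)$-swap occurs --- in that situation the correction $[w,v,v'](v''(\cdots))$ has nonzero primitive component by Lemma~\ref{lem:primitivecomponent} and nothing kills it. This can in fact be ruled out (with the order $B_V<B_W$ a lone $[\T,\T,\T]$-factor only ever moves rightward, since nothing in $B_V$ exceeds it; and if two such factors are present, either one survives at position $\geq 4$ or the correction dies by Corollary~\ref{cor:zero} or~\ref{cor:solvable}), but this positional bookkeeping is the actual content of the edge case and is absent from your sketch.

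The paper sidesteps all of this by inducting on $l-k$ rather than on $l$: it swaps the distinguished factor $a_k$ one step to the \emph{right}, so the main term carries it at position $k+1$ and every correction carries a $[\T,\T,\T]$-factor at position $\geq k>1$, with $l-k$ strictly decreasing in both cases; no case analysis on positions is needed. The base case $k=l$ is then settled by noting that $[\T,\T,a_l]=0$ forces $a_l$ to survive as the last factor of every term, so the fully sorted expansion consists of PBW basis elements of length $\geq 2$. Either supply the monotonicity argument above to close your induction on $l$, or switch to the paper's induction on $l-k$.
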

\begin{proof}
Without loss of generality we can assume that $a_1,\dots, a_l \in B$. What we cannot assume is that $a_1(a_2(\cdots(a_{l-1}a_l)))$ belongs to the basis $B_{\PBW}$. To reorder the factors we  use 
\begin{equation}
\label{eq:flip}
L_{a_i}L_{a_i+1} = L_{a_{i+1}}L_{a_i} + [L_{a_i},L_{a_{i+1}}] \quad \text{with}\quad [L_{a_i},L_{a_{i+1}}] \in \Der(U(\T)).
\end{equation}
Thus, $a_1(a_2(\cdots(a_{l-1}a_l)))$ can be written as $a_1(a_2(\cdots(a_{k+1}(a_k(\cdots (a_{l-1}a_l))))))$ plus a linear combination of products $a'_1(a'_2(\cdots(a'_{l-3}a'_{l-2})))$ with $a'_1,\dots, a'_{l-2} \in\T$ where  $a'_i \in [\T,\T,\T]$ for some $i \geq k$.  The result will immediately  follow by induction on $l-k$ once we prove the initial case $l-k = 0$. If $k = l$ then, by Corollary \ref{cor:solvable}, $[\T,\T,a_l] = 0$ and all the nonzero products $a'_1(a'_2(\cdots(a'_{l-3}a'_{l-2})))$ contain at least two factors (and  $a'_{l-2} = a_l$). Therefore, after applying (\ref{eq:flip}) several times we end up with the expansion of $a_1(a_2(\cdots(a_{l-1}a_l)))$ as a linear combination of elements in  $B_{\PBW}$ of length $\geq 2$. Thus, the primitive component of  $a_1(a_2(\cdots(a_{l-1}a_l)))$ is zero.
\end{proof}

\begin{lemma}
\label{lem:primitivecomponent}
Let $a_1,\dots, a_l $ be elements in $ \T$. If $a_1 \in [\T,\T,\T]$ then the primitive component of $a_1(a_2(\cdots (a_{l-1}a_{l})))$ is $[a_1,a_2,\dots,a_l]$.
\end{lemma}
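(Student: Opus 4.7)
The plan is to proceed by induction on $l$, using the derivation $D:=[L_{a_1},L_{a_2}]$ to peel off the first two factors as a triple bracket while controlling the other pieces via Lemma~\ref{lem:primitivecomponentzero}.

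For the base cases, $l=1$ is tautological since $a_1$ is itself primitive and $[a_1]=a_1$. For $l=2$, primitive elements commute in any left Bruck Hopf algebra, so after reducing to $a_1,a_2\in B$ one sees that $a_1a_2$ (or equivalently $a_2a_1$) is already a length-$2$ PBW monomial, hence has zero primitive component, which matches $[a_1,a_2]=0$.

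For the inductive step $l\geq 3$, set $w:=a_3(a_4(\cdots(a_{l-1}a_l)))$. By~(\ref{eq:der}), $D=[L_{a_1},L_{a_2}]$ is a derivation of $U(\T)$, and so
\begin{equation*}
a_1(a_2(w)) \;=\; a_2(a_1(w)) + D(w).
\end{equation*}
Since $a_1\in[\T,\T,\T]$ occupies position $2$ in $a_2(a_1(w))$, Lemma~\ref{lem:primitivecomponentzero} gives $a_2(a_1(w))\equiv 0$. Expanding $D(w)$ by Leibniz gives a sum of $l-2$ products in which the $i$-th summand replaces $a_i$ by $D(a_i)=[a_1,a_2,a_i]\in[\T,\T,\T]$ in position $i-2$ of an $(l-2)$-fold left-associated product. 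For $i\geq 4$ this element lies in position $\geq 2$, so Lemma~\ref{lem:primitivecomponentzero} kills its primitive component.

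The only surviving term corresponds to $i=3$, namely $[a_1,a_2,a_3]\bigl(a_4(\cdots(a_{l-1}a_l))\bigr)$. Its first factor $[a_1,a_2,a_3]$ still lies in $[\T,\T,\T]$ and the product has length $l-2$, so the induction hypothesis applies and yields primitive component $[[a_1,a_2,a_3],a_4,\ldots,a_l]=[a_1,a_2,\ldots,a_l]$ (the latter equality holds both when $l$ is odd, by the recursive definition of $[a_1,\ldots,a_l]$, and when $l$ is even, since both sides are $0$). Combining, the primitive component of $a_1(a_2(\cdots(a_{l-1}a_l)))$ is $[a_1,a_2,\ldots,a_l]$, completing the induction. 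The main subtlety is not a calculation but noticing that the derivation $D$ neatly separates the one meaningful contribution ($i=3$) from the rest, which are all annihilated by Lemma~\ref{lem:primitivecomponentzero}; without the commutative-automorphic hypothesis feeding into Corollary~\ref{cor:solvable}, that lemma—and hence this whole reduction—would not be available.
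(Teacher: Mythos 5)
Your proof is correct and follows essentially the same route as the paper: the paper likewise writes $a_1(a_2(\cdots)) = a_2(a_1(\cdots)) + \sum_{i=3}^l a_3(\cdots([a_1,a_2,a_i](\cdots)))$ using the derivation $[L_{a_1},L_{a_2}]$, kills all terms except the $i=3$ one via Lemma~\ref{lem:primitivecomponentzero}, and iterates to reach $[a_1,a_2,\dots,a_l]$. Your write-up just makes the induction and the parity bookkeeping more explicit than the paper's terse chain of congruences.
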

\begin{proof}
The cases $l = 1$ and $l = 2$ are trivial, so let us assume that $l \geq 3$. We have 
\begin{eqnarray*}
a_1(a_2(\cdots (a_{l-1}a_{l}))) &=& a_2(a_1(\cdots (a_{l-1}a_{l}))) + \sum_{i=3}^l a_3(\cdots ([a_1,a_2,a_i](\cdots(a_{l-1}a_l)))) \\
&\equiv&  [a_1,a_2,a_3](a_4(\cdots (a_{l-1}a_{l}))) \equiv \cdots \\
&\equiv& [a_1,a_2,\dots, a_l].
\end{eqnarray*}
\end{proof}

Now that we have a method to compute primitive components we can prove the commutativity of $\dot{U}(T)$.

\begin{proposition}\label{prop:main_proposition}
For any commutative automorphic Lie triple system $(T,[a,b,c])$ the algebra $\dot{U}(T)$ is commutative.
\end{proposition}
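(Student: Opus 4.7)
The plan is to combine the two-generator reduction outlined immediately before the statement with the primitive-component machinery of Lemmas \ref{lem:primitivecomponentzero} and \ref{lem:primitivecomponent}.

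First, I would carry out the reduction in detail: since the powers $\{a^n : a\in T, n\geq 0\}$ span $U(T)$ and the product $\cdot$ is bilinear, it suffices to show $a^m\cdot b^n=b^n\cdot a^m$ in $\dot U(T)$ for all $a,b\in T$ and all $m,n\geq 0$; applying the universal property of $U(\T)$ to the map $\T\to T$ sending the free generators to $a,b$ produces a Hopf-algebra surjection $\dot U(\T)\twoheadrightarrow\dot U(T)$, which reduces the problem to the identity $a^m\cdot b^n=b^n\cdot a^m$ in the free algebra $\dot U(\T)$.

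Second, I would expand both sides explicitly. Using $x\cdot y=x\1\phi_{x\2}(y)$, the cocommutative coproduct $\Delta(a^m)=\sum_{i}\binom{m}{i}a^i\otimes a^{m-i}$, the homomorphism-type property (\ref{eq:fa}) iterated $n-1$ times on $b^n=b(b(\cdots b))$, and Lemma \ref{lem:phi} which computes $\phi_{a^k}(b)=[b,\stackrel{k}{\dots}a]$, both $a^m\cdot b^n$ and $b^n\cdot a^m$ become explicit linear combinations, indexed by $(i,k_1,\dots,k_n)$ with $k_1+\cdots+k_n=m-i$ (respectively $(j,\ell_1,\dots,\ell_m)$), of products in $U(\T)$ of the form
\begin{displaymath}
 a^i\bigl([b,\stackrel{k_1}{\dots}a]([b,\stackrel{k_2}{\dots}a](\cdots[b,\stackrel{k_n}{\dots}a]))\bigr)
\end{displaymath}
and the mirror expression obtained by swapping $a\leftrightarrow b$. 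Since $[b,\stackrel{k}{\dots}a]=0$ when $k$ is odd and $[b,\stackrel{k}{\dots}a]\in[\T,\T,\T]$ when $k\geq 2$, only a restricted set of compositions contributes.

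Third, I would normalize both expansions in the PBW basis $B_{\PBW}$ of Theorem \ref{thm:basis}. The rearrangement lemma just before Lemma \ref{lem:primitivecomponentzero} allows me to freely permute the bracketed factors in $[\T,\T,\T]$ inside a PBW monomial, Corollary \ref{cor:solvable} kills triple products with two bracketed arguments, and Lemmas \ref{lem:primitivecomponentzero} and \ref{lem:primitivecomponent} compute the primitive components. The symmetry $a\leftrightarrow b$ visible in the basis description $B=\{a,b,[a,b,\stackrel{i}{\dots}a,\stackrel{j}{\dots}b]\}$, together with the skew-symmetry $[x,y,z]=-[y,x,z]$, is what pairs up terms of the two expansions.

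The main obstacle is the coefficient matching in Step 3. Both expansions produce, for each PBW monomial $b_1(b_2(\cdots(b_{l-1}b_l)))$, a sum of multinomial coefficients times signs coming from the antisymmetry of the triple product, and these must be shown to coincide. In practice I would proceed length-by-length in the PBW grading $U(\T)=\bigoplus_l U(\T)_l$: handle the primitive ($l=1$) component first via Lemma \ref{lem:primitivecomponent}, and then propagate the equality to higher lengths by exploiting the derivation behaviour of $\phi_a$ and $\phi_b$ (which follows from Lemma \ref{lem:phi} and the identity $[a,b]=0$ for primitive elements of a left Bruck Hopf algebra), so that once the length-$1$ part of $a^m\cdot b^n - b^n\cdot a^m$ vanishes, the higher-length parts vanish by a recursion inherited from the comultiplication.
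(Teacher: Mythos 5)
Your overall architecture --- reduce to $a^m\cdot b^n = b^n\cdot a^m$ in $\dot U(\T)$ for the free two-generated system, then invoke the primitive-component lemmas --- is the paper's, but the decisive organizing step is missing, and without it your Step 3 collapses into exactly the coefficient-matching problem you yourself flag as the ``main obstacle.'' The paper never matches PBW coefficients in higher lengths at all. Instead it runs a double induction on $(m,n)$: assuming $a^i\cdot b^j=b^j\cdot a^i$ for all relevant smaller pairs, the fact that $\Delta$ is multiplicative for $\cdot$ gives
\begin{displaymath}
\Delta(a^m\cdot b^n-b^n\cdot a^m)=(a^m\cdot b^n-b^n\cdot a^m)\otimes 1+1\otimes(a^m\cdot b^n-b^n\cdot a^m),
\end{displaymath}
i.e.\ the difference is \emph{primitive} (Lemma~\ref{lem:tau}), so only the length-$1$ components need comparing and every higher-length PBW coefficient is irrelevant. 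Your closing sentence has the causality reversed: the higher-length parts do not vanish ``once the length-$1$ part vanishes,'' nor because $\phi_a,\phi_b$ are derivations (for primitive $a$ the map $\phi_a$ is in fact identically zero by Lemma~\ref{lem:phi}); they vanish because the induction hypothesis is fed through the comultiplication. You must set up the induction on $(m,n)$ explicitly, including the base cases $m=0$ and $m=1$, where $a\cdot b^n = ab^n = b^n\1(S(b^n\2)(a\,b^n\3)) = b^n\cdot a$ via (\ref{eq:antipodeS}).

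Second, the remaining length-$1$ comparison is not a formality that ``symmetry $a\leftrightarrow b$ visible in the basis'' settles. Carrying out the computation with Lemmas \ref{lem:primitivecomponentzero} and \ref{lem:primitivecomponent} yields the primitive component of $a^m\cdot b^n$ as $[[b,a,\stackrel{m-1}{\dots}a],\stackrel{n-1}{\dots}b]$ but that of $b^n\cdot a^m$ as $[b,a,\stackrel{n-1}{\dots}b,\stackrel{m-1}{\dots}a]+[[a,b,\stackrel{n-1}{\dots}b],\stackrel{m-1}{\dots}a]$ --- two genuinely different-looking elements. Their equality requires a parity case analysis on $m$ and $n$ together with Lemma~\ref{lem:symmetry} (permutation invariance of $[a,b,a_1,\dots,a_n]$ for arguments in $\spann\langle a,b\rangle$), which is the real payoff of Section~\ref{subsec:free_calts}. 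Make both of these steps explicit and your plan becomes the paper's proof.
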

\begin{proof}
We will prove by double induction on $m$ and $n$  that 
\begin{displaymath}
a^m \cdot b^n = b^n \cdot a^m
\end{displaymath}
holds in $\dot{U}(\T)$  for any $m,n \geq 0$, being the case $m = 0$ trivial. The case $m = 1$ follows from
\begin{displaymath}
 b^n \cdot a = b^n\1 \phi_{b^n\2}(a) = b^n\1 (S(b^n\2(a b^n\3))) = ab^n= a \cdot b^n.
\end{displaymath}
So, we fix 
\begin{displaymath}
m \geq 2
\end{displaymath}
and we assume 
\begin{equation}
\label{eq:commutation1}
a^i \cdot b^n = b^n \cdot a^i \quad \text{for all } n \geq 0 \text{ and } 0 \leq i \leq m-1.
\end{equation}
Now we fix 
\begin{displaymath}
n \geq 2
\end{displaymath}
and we assume 
\begin{equation}
\label{eq:commutation2}
a^m \cdot b^j = b^j \cdot a^m  \quad \text{for all }  0 \leq j \leq n-1.
\end{equation}
We will prove in the next two lemmas that $a^m \cdot b^n = b^n \cdot a ^m$  for these fixed $m$ and $n$. By induction on $n$ this shows that $a^m \cdot b^n = b^n \cdot a^m$ for all $n \geq 0$, and by induction on $m$ we  get that this equality  holds for all $n, m \geq 0$.
\begin{lemma}
\label{lem:tau}
In $\dot{U}(\T)$ we have
\begin{displaymath}
a^m \cdot b^n - b^n \cdot a^m  \in \T.
\end{displaymath}
\end{lemma}
\begin{proof}
\begin{eqnarray*}
\Delta(a^m \cdot b^n - b^n \cdot a^m )  &=& a^m\1 \cdot b^n\1 \otimes a^m\2 \cdot b^n\2 - b^n\1 \cdot a^m\1 \otimes b^n\2 \cdot a^m\2 \\
&=& (a^m \cdot b^n- b^n \cdot a^m )\otimes 1 + 1 \otimes (a^m \cdot b^n - b^n \cdot a^m )
\end{eqnarray*}
by (\ref{eq:commutation1}) and (\ref{eq:commutation2}).
\end{proof}

\begin{lemma}
In $\dot{U}(\T)$ we have
\begin{displaymath}
a^m \cdot b^n = b^n \cdot a^m.
\end{displaymath}
\end{lemma}
\begin{proof}
Our strategy is to expand $a^m \cdot b^n$ and $b^n \cdot a^m$ in terms of $B_{\PBW}$ to compare the primitive components of them. This  amounts to discard all the basic elements not belonging to $B$ that appear in these expansions. Notice that if the primitive components agree then $a^m\cdot b^n = b^n \cdot a^m$ because $a^m \cdot b^n - b^n \cdot a^m \in \T$. 

On the one hand
\begin{eqnarray*}
a^m \cdot b^n &=& a^m\1 \phi_{a^m\2}(b^n) \stackrel{\langle 1 \rangle }{\equiv}\phi_{a^m\1}(b) (\cdots (\phi_{a^m_{(n-1)}}(b)\phi_{a^m_{(n)}}(b)))\\
& \stackrel{\langle 2 \rangle}{\equiv}& \phi_{a^m}(b) (b( \cdots (bb))) \stackrel{\langle 3 \rangle}{\equiv} [[b,a,\stackrel{m-1}{\dots}a],\stackrel{n-1}{\dots}b]
\end{eqnarray*}
where $\langle 1 \rangle$ follows from the fact that the elements in $a^i U(\T)_j$ have zero primitive component if $i , j\geq 1$ ($a$ is the lowest element in $B$) and congruences $\langle 2 \rangle$ and $\langle 3 \rangle$ are consequences of Lemma \ref{lem:primitivecomponentzero} and Lemma \ref{lem:primitivecomponent} respectively. Beware, the element we have obtained is not $ [b,a,\stackrel{m-1}{\dots}a,\stackrel{n-1}{\dots}b] $.  On the other hand,
\begin{eqnarray*}
b^n a^m &=&  b(\cdots(b(a(\cdots(aa))))) \stackrel{\langle 1 \rangle}{\equiv}b(\cdots(a(b(\cdots(aa))))) \equiv \cdots \\
& \equiv& a(b(\cdots(b(a(\cdots(aa)))))) + [b,a,b](b(\cdots(b(a(\cdots(aa)))))) \\
& \stackrel{\langle 2 \rangle}{\equiv}& [b,a,b](b(\cdots(b(a(\cdots(aa))))))  \stackrel{\langle 3 \rangle}{\equiv}  [b,a,\stackrel{n-1}{\dots}b,\stackrel{m-1}{\dots}a] 
\end{eqnarray*}
where $\langle 1 \rangle$ follows from Lemma \ref{lem:primitivecomponentzero} ($n \geq 2$ is required), $\langle 2 \rangle$  follows again from the fact that the elements in $a^i U(\T)_j$ have zero primitive component if $i , j\geq 1$ and $\langle 3 \rangle$ is a consequence of Lemma \ref{lem:primitivecomponent}. Therefore,
\begin{eqnarray*}
b^n \cdot a^m &=& b^n\1 \phi_{b^n\2}(a^m) = b^n\1(\phi_{b^n\2}(a)( \cdots (\phi_{b^n_{(m-1)}}(a) \phi_{b^n_{(m)}}(a) )))\\
&\stackrel{\langle 1 \rangle}{\equiv}& b^n a^m + \phi_{b^n}(a)a^{m-1}\\
&\stackrel{\langle 2 \rangle}{\equiv}& [b,a,\stackrel{n-1}{\dots}b,\stackrel{m-1}{\dots}a] +[ [a,b,\stackrel{n-1}{\dots}b],\stackrel{m-1}{\dots}a]
\end{eqnarray*}
where $\langle 1 \rangle$ and  $\langle 2 \rangle$ follow again from  Lemma \ref{lem:primitivecomponentzero} and Lemma \ref{lem:primitivecomponent} respectively. Finally, we will check 
\begin{displaymath}
 [[b,a,\stackrel{m-1}{\dots}a],\stackrel{n-1}{\dots}b] = [b,a,\stackrel{n-1}{\dots}b,\stackrel{m-1}{\dots}a] +[ [a,b,\stackrel{n-1}{\dots}b],\stackrel{m-1}{\dots}a].
\end{displaymath}
In case that $m$ and $n$ have the same parity both sides of the equality vanish. In case that $m$ is odd and $n$ is even the equality is  $0 =  [b,a,\stackrel{n-1}{\dots}b,\stackrel{m-1}{\dots}a] +[a,b,\stackrel{n-1}{\dots}b,\stackrel{m-1}{\dots}a]$, which is true. If $m$ is even and $n$ is odd then the equality is $ [b,a,\stackrel{m-1}{\dots}a,\stackrel{n-1}{\dots}b] = [b,a,\stackrel{n-1}{\dots}b,\stackrel{m-1}{\dots}a]$, which is true again by Lemma \ref{lem:symmetry}.
\end{proof}
This concludes the proof of Proposition~\ref{prop:main_proposition}.
\end{proof}

\section{A Baker-Campbell-Hausdorff formula for commutative automorphic loops}
\label{sec:bch}

In this section we will compute a Baker-Campbell-Hausdorff formula for commutative automorphic formal loops. Let $\T$ be the commutative automorphic Lie triple system freely generated by $\{ a, b\}$,  $U(\T)$ its left Bruck universal enveloping algebra and $\overline{U(\T)}$  its completion with respect to the $I$-adic topology ($I = \ker \epsilon$). Endowed with the continuous extensions of the operations of $U(\T)$, $\overline{U(\T)}$ is a topological left Bruck Hopf algebra (which basically amounts to saying that the corresponding axioms are satisfied when the tensor product is replaced by the completed tensor product). The maps $\phi_x \colon U(\T) \rightarrow U(\T)$ can be continuously extended to $\overline{U(\T)}$ so that we can define again $x \cdot y = x\1 \phi_{x\2}(y)$ to obtain a structure of topological commutative automorphic Hopf algebra on $\overline{U(\T)}$. In this algebra we can define the exponential of elements in the completion $\overline{\T}$, that we can identify with the space of primitive elements of $\overline{U(\T)}$, with respect to the $I$-adic topology ($I = \T$):
\begin{displaymath}
\exp(a) := \sum_{n=0}^\infty \frac{1}{n!} a^n
\end{displaymath}
where $a \in \overline{\T}$ and  $a^n := a\cdot (\cdots (a \cdot a)) = a(\cdots (aa))$. This defines a bijection between primitive and \emph{group-like} elements of $\overline{U(\T)}$--elements $g$ with $\Delta(g) = g \otimes g$ and $\epsilon(g) = 1$--so there exist elements $\BCH(a,b)$ and $\BCH^{\cdot}(a,b)$ in $\overline{\T}$ such that
\begin{displaymath}
\exp(\BCH(a,b)) = \exp(a)\exp(b) \quad \text{and} \quad \exp(\BCH(a,b)^{\cdot}) = \exp(a) \cdot \exp(b)
\end{displaymath}
for every $a,b \in \overline{\T}$. Clearly
\begin{eqnarray*}
\exp(\BCH(a,b)^{\cdot}) &=& \exp(a) \cdot \exp(b) = \exp(a) \phi_{\exp(a)}(\exp(b))\\ 
&=& \exp(a) \exp(\phi_{\exp(a)}(b)) = \exp(\BCH(a,\phi_{\exp(a)}(b)))
\end{eqnarray*}
implies 
\begin{equation}
\label{eq:relationbch}
\BCH(a,b)^{\cdot} = \BCH(a,\phi_{\exp(a)}(b)).
\end{equation}
By Theorem \ref{thm:basis} there exist expansions
\begin{eqnarray*}
\BCH(a,b) &=& a  + b + \sum_{i,j \geq 1} \alpha_{i,j} [a,b,\stackrel{i-1}{\dots}a,\stackrel{j-1}{\dots}b] \quad  \text{and}\\
\BCH(a,b)^{\cdot} &=& a  + b + \sum_{i,j \geq 1} \beta_{i,j} [a,b,\stackrel{i-1}{\dots}a,\stackrel{j-1}{\dots}b]
\end{eqnarray*}
for some $\alpha_{i,j},\beta_{i,j} \in \field$. We will compute these coefficients by means of a concrete example of a  commutative automorphic Lie triple system. Let us consider the Lie algebra 
\begin{displaymath}
L :=\left \{ \left(\begin{array}{rrr} 0 & - \alpha & \beta \\ -\alpha & 0 & \gamma \\ 0 & 0 & 0 \end{array} \right) \mid \alpha, \beta, \gamma \in \field \right\}
\end{displaymath}
and the subspace
\begin{displaymath}
T :=\left\{ \left(\begin{array}{rrr} 0 & - \alpha & \beta \\ -\alpha & 0 & 0 \\ 0 & 0 & 0 \end{array} \right) \mid \alpha, \beta \in \field\right\}.
\end{displaymath}
The reader can check that $T$ is closed under the triple product $[a,b,c]:=\frac{1}{4}[[a,b],c]$ and it is a commutative automorphic Lie triple system (the scalar $1/4$ has been included for convenience).
The matrices 
\begin{displaymath}
u := \left(\begin{array}{rrr} 0 & 0 & 1 \\0 & 0 & 0 \\ 0 & 0 & 0 \end{array} \right), \quad \text{and}\quad v :=  \left(\begin{array}{rrr} 0 & -1 & 0 \\-1 & 0 & 0 \\ 0 & 0 & 0 \end{array} \right)
\end{displaymath}
form a basis of $T$ with 
\begin{displaymath}
[v,u,u] = 0 \quad\text{and}\quad [v,u,v] = -\frac{1}{4}u.
\end{displaymath}

\begin{lemma}
Let $\bar{a} := -2(u-v)$ and $ \bar{b} := 2(u+v)$. Then
\begin{displaymath}
[\bar{a},\bar{b},\stackrel{i-1}{\dots}\bar{a},\stackrel{j-1}{\dots}\bar{b}]  = -4 u
\end{displaymath}
if $i+j$ is odd and greater than $1$.
\end{lemma}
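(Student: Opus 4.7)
The plan is to reduce the claim to two short computations in the ambient Lie algebra $L$, after which a direct induction on $i+j$ finishes the job.

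First I would verify, by direct matrix multiplication, the two identities
\[
[[u,v],u]=0 \qquad\text{and}\qquad [[u,v],v]=u.
\]
Substituting $\bar{a}=-2u+2v$ and $\bar{b}=2u+2v$, the $u$-components cancel in each of $[u,\bar{a}]$ and $[u,\bar{b}]$, so $[u,\bar{a}]=[u,\bar{b}]=2[u,v]$; bracketing again and using $[[u,v],u]=0$ gives $[[u,v],\bar{a}]=[[u,v],\bar{b}]=2u$. Since the triple product on $T$ is $[x,y,z]=\frac{1}{4}[[x,y],z]$, these collapse into the key invariance
\[
[u,x,y]=u \qquad\text{for all } x,y\in\{\bar{a},\bar{b}\},
\]
and by linearity $R_{x,y}(-4u)=-4u$. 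Applying the same cancellation to $[\bar{a},\bar{b}]=-8[u,v]$ gives $[\bar{a},\bar{b},c]=\frac{1}{4}\cdot(-8)\cdot 2u=-4u$ for $c\in\{\bar{a},\bar{b}\}$, which settles the cases $i+j=3$.

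For the induction I would fix an odd $n=i+j\ge 5$ and assume the conclusion for all smaller odd values at least $3$. By the recursive definition of $[a_1,\dots,a_n]$,
\[
[\bar{a},\bar{b},\stackrel{i-1}{\dots}\bar{a},\stackrel{j-1}{\dots}\bar{b}]
=\bigl[[\bar{a},\bar{b},\stackrel{i'-1}{\dots}\bar{a},\stackrel{j'-1}{\dots}\bar{b}],c_{n-1},c_n\bigr],
\]
where $(i',j')$ is obtained from $(i,j)$ by deleting the last two entries and satisfies $i',j'\ge 1$ with $i'+j'=n-2\ge 3$ odd (one checks the three possibilities $j\ge 3$, $j=2$, $j=1$ case by case). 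The inner bracket equals $-4u$ by the inductive hypothesis, and the outer triple product sends $-4u$ to $-4u$ by the invariance property.

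The only obstacle I anticipate is notational: the factor $\frac{1}{4}$ in the triple product and the signs in the expansions of $\bar{a}$ and $\bar{b}$ in terms of $u,v$ need to be tracked carefully. Conceptually everything is captured by the two identities $[[u,v],u]=0$ and $[[u,v],v]=u$, which simultaneously yield the base case $[\bar{a},\bar{b},c]=-4u$ and the invariance $R_{x,y}(u)=u$, and from there the induction is essentially automatic.
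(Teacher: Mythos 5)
Your proof is correct, and since the paper states this lemma without proof (leaving it as a direct computation), your argument supplies exactly the intended verification: the identities $[[u,v],u]=0$, $[[u,v],v]=u$ give both the base case $[\bar{a},\bar{b},c]=-4u$ and the invariance $[u,x,y]=u$ for $x,y\in\{\bar{a},\bar{b}\}$, after which the induction via the left-normed recursive definition is immediate. The constants all check out ($[\bar{a},\bar{b}]=-8[u,v]$, the factor $\tfrac14$, and the case analysis for which two letters are stripped off), so there is nothing to correct.
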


Let us consider the algebra $U(L)[[s,t]]$ of formal power series on two variables $s,t$ with coefficients in $U(L)$, where $U(L)$ stands for the universal enveloping algebra of the Lie algebra $L$. The associative product of $U(L)$ will be denoted by $*$. $U(T)$ and $U(L)$ are very much related. With the new product
\begin{displaymath}
xy := r(x\1)*y*r(x\2) \quad \text{where} \quad r(x\1) * r(x\2) = x
\end{displaymath}
$U(L)$ is a left Bruck Hopf algebra (see \cite{MP10a} for details). Moreover, for any $a \in T$, $r(a)1+1r(a) = a$ implies $r(a) = \frac{1}{2} a$ and $ax = r(a)*x*1 + 1*x*r(a) = \frac{1}{2}(a*x+x*a)$, thus
\begin{displaymath}
a(bc) - b(ac) = \frac{1}{4}[[a,b],c]
\end{displaymath}
i.e. $[a,b,c]$ is recovered as $a(bc)-b(ac)$. By the universal property of $U(T)$ this gives a homomorphism $U(T) \rightarrow (U(L),*)$ of unital algebras that identifies $U(T)$ and the unital subalgebra of $(U(L),*)$ generated by $T$. In the following we will assume this identification.

The homomorphism of associative algebras $U(L) \rightarrow M_3(\field)$ extends to a homomorphism 
\begin{displaymath}
\varphi \colon U(L)[[s,t]] \rightarrow M_3(\field)[[s,t]].
\end{displaymath}
In $U(L)$ we can define the exponentials $\exp(s\bar{a})$ and $\exp(t\bar{b})$ in the natural way. Under $\varphi$ we have
\begin{displaymath}
\varphi(\exp(s\bar{a}) )= e^{s\bar{a}} \quad  \varphi(\exp(t\bar{b})) = e^{t\bar{b}}
\end{displaymath}
where $e^{s\bar{a}}$ denotes the usual exponential of the matrix $s\bar{a}$.  Notice that in $U(L)[[s,t]]$ both products  $x*y$ and  $xy$ lead to the same exponential for elements of $T$. We also have a linear map (it is not a homomorphism of algebras)
\begin{displaymath}
\psi\colon \overline{U(\T)} \rightarrow U(L)[[s,t]] \stackrel{\varphi}{\rightarrow} M_3(\field)[[s,t]]
\end{displaymath}
where the first arrow is the homomorphism induced by $a\mapsto s\bar{a}$ and $b \mapsto t\bar{b}$ and the universal property of $U(\T)$. The first arrow is a homomorphism when $U(L)[[s,t]]$ is considered with the product $xy$ but in order for the second arrow to be a homomorphism we have to consider the product $x*y$ on $U(L)[[s,t]]$. Clearly
\begin{eqnarray*}
\psi(\exp(a) \exp(b)) &=& \varphi(\exp(s\bar{a})\exp(t\bar{b})) = \varphi(\exp(\frac{s}{2}\bar{a})*\exp(t\bar{b})*\exp(\frac{s}{2}\bar{a})) \\
&=& e^{\frac{s}{2}\bar{a}}e^{t\bar{b}}e^{\frac{s}{2}\bar{a}}.
\end{eqnarray*}
Thus,
\begin{displaymath}
e^{\psi(\BCH(a,b))} = e^{\frac{s}{2}\bar{a}}e^{t\bar{b}}e^{\frac{s}{2}\bar{a}}.
\end{displaymath}
After some computations we get
\begin{displaymath}
e^{\frac{s}{2}\bar{a}}e^{t\bar{b}}e^{\frac{s}{2}\bar{a}} = e^A
\end{displaymath}
with
\begin{displaymath}
A = \left(
\begin{array}{ccc}
 0 & 2 (s+t) & -\frac{2 \left(e^{2 (s+t)}-2 e^{s+2t}+2 e^s-1\right)
   (s+t)}{e^{2 (s+t)}-1} \\
 2 (s+t) & 0 & 0 \\
 0 & 0 & 0
\end{array}
\right).
\end{displaymath}
Therefore,
\begin{displaymath}
s\bar{a} + t \bar{b} -(4 \sum_{i,j\geq 1} {\alpha}_{i,j} s^i t^j )u = \left(
\begin{array}{ccc}
 0 & 2 (s+t) & -\frac{2 \left(e^{2 (s+t)}-2 e^{s+2t}+2 e^s-1\right)
   (s+t)}{e^{2 (s+t)}-1} \\
 2 (s+t) & 0 & 0 \\
 0 & 0 & 0
\end{array}
\right).
\end{displaymath}
Comparing the entry in position $(1,3)$ in both matrices we get
\begin{displaymath}
f(s,t):=-4 \sum_{i,j \geq 1} \alpha_{i,j} s^i t^j = 2s-2t-\frac{2 \left(e^{2 (s+t)}-2 e^{s+2t}+2 e^s-1\right)
   (s+t)}{e^{2 (s+t)}-1}
\end{displaymath}
By (\ref{eq:relationbch}) we have 
\begin{equation}
\psi(\BCH(a,b)^{\cdot}) = \psi(\BCH(a,\phi_{\exp(a)}(b))).
\end{equation}
To compute the right-hand side of this equality we first need to compute 
\begin{eqnarray*}
\psi(\phi_{\exp(a)}(b)) &=& t\bar{b} + \sum_{2n \geq 2} \frac{1}{(2n)!}[t\bar{b},\stackrel{2n}{\dots}s\bar{a}] = t\bar{b} + \sum_{2n\geq 2} \frac{1}{(2n)!} s^{2n}t(4u) \\
&=& t\bar{b} + t \phi(s)u
\end{eqnarray*}
with
\begin{displaymath}
\phi(s) := 4 \left(\frac{e^s+ e^{-s}}{2}-1\right).
\end{displaymath}
Hence 
\begin{eqnarray*}
-\left(4 \sum_{i,j \geq 1} \beta_{i,j} s^i t^j \right) u &=& t\phi(s)u + \\ &&  \quad \sum_{i,j\geq 1} \alpha_{i,j} [s\bar{a},t\bar{b} + t\phi(s)u, \stackrel{i-1}{\dots}s\bar{a},\stackrel{j-1}{\dots}t\bar{b} + t\phi(s)u]\\
&=& t\phi(s)u + \sum_{i,j\geq 1} \alpha_{i,j} [s\bar{a},t\bar{b}+t\phi(s)u,\stackrel{i-1}{\dots}s\bar{a},\stackrel{j-1}{\dots}t\bar{b}]\\
&=& t\phi(s)u -(4 \sum_{i,j\geq 1} \alpha_{i,j} s^it^j)u \\
&& \quad  +  \sum_{i,j\geq 1} \alpha_{i,j} s^it^j\phi(s)[\bar{a},u,\stackrel{i-1}{\dots}\bar{a},\stackrel{j-1}{\dots}\bar{b}]\\
&=& t\phi(s) u + f(s,t)u + \frac{1}{4}\phi(s) f(s,t)u
\end{eqnarray*}
so
\begin{eqnarray*}
\sum_{i,j \geq 1} \beta_{i,j} s^i t^j  &=& \frac{\left(e^{2 s}-1\right) \left(e^{2 t}+1\right) t-\left(e^{2 s}+1\right)
    \left(e^{2 t}-1\right)s}{2 \left(e^{2 (s+t)}-1\right)}\\
&=& \frac{t-s}{2}+ \frac{\left(e^{2 s}-e^{2 t}\right) (s+t)}{2 \left(e^{2 (s+t)}-1\right)}
\end{eqnarray*}

\bigskip
\begin{theorem}
\label{thm:BCH}
With the previous notation we have
\begin{displaymath}
\BCH(a,b)^{\cdot} = a + b + \sum_{i,j\geq 1} \beta_{i,j} [a,b,\stackrel{i-1}{\dots}a,\stackrel{j-1}{\dots}b]
\end{displaymath}
where $\beta_{i,j}$ ($i,j \geq 1$) is the coefficient of $s^it^j$ in the Taylor expansion of
\begin{displaymath}
 \frac{\left(e^{2 s}-e^{2 t}\right) (s+t)}{2 \left(e^{2 (s+t)}-1\right)}
\end{displaymath}
at $(0,0)$.
\end{theorem}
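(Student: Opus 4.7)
The plan is to determine the coefficients $\beta_{i,j}$ by computing $\BCH(a,b)^{\cdot}$ in a concrete finite-dimensional commutative automorphic Lie triple system where exponentials reduce to matrix exponentials. All the necessary scaffolding is in place: Theorem~\ref{thm:basis} guarantees that the expansion of $\BCH(a,b)^{\cdot}\in\overline{\T}$ in the basis $\{a,b,[a,b,\stackrel{i-1}{\dots}a,\stackrel{j-1}{\dots}b]\}$ is unique, so the $\beta_{i,j}$ are well-defined scalars, and the key identity (\ref{eq:relationbch}), namely $\BCH(a,b)^{\cdot} = \BCH(a,\phi_{\exp(a)}(b))$, reduces the problem to first pinning down the coefficients $\alpha_{i,j}$ of the Bruck BCH series and then twisting by $\phi_{\exp(a)}$.

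First I would compute the $\alpha_{i,j}$. The homomorphism $U(\T)\to U(L)\to M_3(\field)[[s,t]]$ sending $a\mapsto s\bar a$, $b\mapsto t\bar b$ extends (continuously in the $I$-adic topology) to $\psi\colon\overline{U(\T)}\to M_3(\field)[[s,t]]$. The preceding lemma asserts that every nonzero iterated triple $[\bar a,\bar b,\stackrel{i-1}{\dots}\bar a,\stackrel{j-1}{\dots}\bar b]$ collapses to the single matrix $-4u$. Applying $\psi$ to $\exp(\BCH(a,b)) = \exp(a)\exp(b)$ and using that for elements of $T$ the Bruck product transforms exponentials into the symmetric product $e^{(s/2)\bar a}e^{t\bar b}e^{(s/2)\bar a}$ under the embedding $U(T)\hookrightarrow(U(L),*)$, reading off the $(1,3)$ matrix entry after a direct $3\times 3$ exponential computation yields the generating function $f(s,t):=-4\sum_{i,j\geq 1}\alpha_{i,j}s^it^j$ in closed form.

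Second, I would translate this into the $\beta_{i,j}$'s via (\ref{eq:relationbch}). Lemma~\ref{lem:phi} identifies $\phi_{a'^m}$ on primitive elements with the iterated triple-bracket, so
\[
\psi(\phi_{\exp(a)}(b)) = t\bar b + t\phi(s)u,\qquad \phi(s) := 4(\cosh s-1).
\]
Substituting $t\bar b + t\phi(s)u$ for $t\bar b$ in $f$ and applying once more the collapse lemma (every triple lands on $u$) gives
\[
-4\sum_{i,j\geq 1}\beta_{i,j}s^it^j \;=\; t\phi(s) + f(s,t) + \tfrac{1}{4}\phi(s)f(s,t).
\]
All the information about $\BCH(a,b)^{\cdot}$ is contained in this single scalar identity because the coefficients on the left run over the basis elements from Theorem~\ref{thm:basis} which all map to multiples of $u$.

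The only real obstacle is the algebraic simplification in the last step: substituting the explicit closed form of $f(s,t)$ obtained from the matrix exponential into $t\phi(s) + f(s,t) + \tfrac14\phi(s)f(s,t)$ and verifying that the resulting rational–exponential expression collapses to $(t-s)/2 + (e^{2s}-e^{2t})(s+t)/(2(e^{2(s+t)}-1))$. Once this identity is checked, the theorem follows immediately: the linear term $(t-s)/2$ contributes only to the $s^1t^0$ and $s^0t^1$ coefficients, which are absorbed in the $a+b$ summand of the BCH expansion, so for $i,j\geq 1$ the coefficient $\beta_{i,j}$ is precisely the coefficient of $s^it^j$ in the Taylor expansion of $(e^{2s}-e^{2t})(s+t)/(2(e^{2(s+t)}-1))$ at $(0,0)$.
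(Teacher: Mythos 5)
Your proposal reproduces the paper's own proof essentially step for step: the same $3\times 3$ matrix model $(L,T,u,v)$, the same extraction of $f(s,t)=-4\sum\alpha_{i,j}s^it^j$ from the $(1,3)$ entry of $\log\bigl(e^{\frac{s}{2}\bar a}e^{t\bar b}e^{\frac{s}{2}\bar a}\bigr)$, and the same twist by $\phi_{\exp(a)}$ via (\ref{eq:relationbch}) yielding $-4\sum\beta_{i,j}s^it^j=t\phi(s)+f(s,t)+\frac{1}{4}\phi(s)f(s,t)$. The argument is correct and takes the same route as the paper.
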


\begin{remark}
The coefficients $\beta_{p,q}$ were first computed with the aid of a general approach to the Baker-Campbell-Hausdorff for formal loops based on a non-associative Magnus expansion developed in \cite{MPS16}. That approach describes $\BCH(ta,b)$ as the solution of a differential equation, which provides a recursive formula for the coefficients $\alpha_{p,q}$ (notice that $\alpha_{p,q} = \beta_{p,q} = 0$ if $p+q$ is even so we assume $p+q$ is odd): 
\begin{displaymath}
p\alpha_{p,q} = \xi_{p+q-1} {\binom{p+q-2}{p-1}}  - \sum_{i,j} \xi_{p+q-i-j} \alpha_{i,j}{\binom{p+q-i-j-1}{p-i-1}}
\end{displaymath}
where the sum runs on $1 \leq i \leq p-1$, $1 \leq j \leq q$ with $i+j$ odd and $\xi_k$ are the Taylor coefficients of the expansion of  $x/\tanh(x)$ at $x=0$. Coefficients $\beta_{p,q}$ were derived as
\begin{eqnarray*}
\beta_{p,1} &=& -\frac{1}{p!} + \sum_{i \text{ even } \geq 2}^p \frac{1}{(p-i)!} \alpha_{i,1} ,\\
\beta_{p,q} &=&  \sum_{i \text{ even } \geq 2}^p \frac{1}{(p-i)!} \alpha_{i,q} \quad \text{if} \quad q >1.
\end{eqnarray*}
With the help of the computer software \emph{Mathematica} this gave the following table  $(\beta_{p,q})_{1\leq p, q\leq 7}$
\begin{displaymath}
\begin{array}{rrrrrrr}
0 & \frac{1}{3} & 0 & -\frac{1}{45} & 0 & \frac{2}{945} & 0 \\
 -\frac{1}{3} & 0 & -\frac{4}{45} & 0 & \frac{4}{315} & 0 & -\frac{8}{4725}
   \\
 0 & \frac{4}{45} & 0 & \frac{16}{945} & 0 & -\frac{64}{14175} & 0 \\
 \frac{1}{45} & 0 & -\frac{16}{945} & 0 & -\frac{16}{4725} & 0 &
   \frac{32}{22275} \\
 0 & -\frac{4}{315} & 0 & \frac{16}{4725} & 0 & \frac{128}{155925} & 0 \\
 -\frac{2}{945} & 0 & \frac{64}{14175} & 0 & -\frac{128}{155925} & 0 &
   -\frac{48896}{212837625} \\
 0 & \frac{8}{4725} & 0 & -\frac{32}{22275} & 0 & \frac{48896}{212837625} &
   0
\end{array}
\end{displaymath}
that can be checked to agree with the function in Theorem \ref{thm:BCH}.
\end{remark}

\begin{bibdiv}
\begin{biblist}
\bib{Ab80}{book}{
   author={Abe, E.},
   title={Hopf algebras},
   series={Cambridge Tracts in Mathematics},
   volume={74},
   note={Translated from the Japanese by Hisae Kinoshita and Hiroko Tanaka},
   publisher={Cambridge University Press, Cambridge-New York},
   date={1980},
   pages={xii+284},
   isbn={0-521-22240-0},
}

\bib{Ak76}{article}{
   author={Akivis, M. A.},
   title={The local algebras of a multidimensional three-web},
   language={Russian},
   journal={Sibirsk. Mat. \v Z.},
   volume={17},
   date={1976},
   number={1},
   pages={5--11, 237},
   issn={0037-4474},
   review={\MR{0405261}},
}

\bib{AkGo00}{article}{
   author={Akivis, M. A.},
   author={Goldberg, V. V.},
   title={Differential geometry of webs},
   conference={
      title={Handbook of differential geometry, Vol. I},
   },
   book={
      publisher={North-Holland, Amsterdam},
   },
   date={2000},
   pages={1--152},
}

\bib{Go06}{article}{
   author={Akivis, M. A.},
   author={Goldberg, V. V.},
   title={Local algebras of a differential quasigroup},
   journal={Bull. Amer. Math. Soc. (N.S.)},
   volume={43},
   date={2006},
   number={2},
   pages={207--226 (electronic)},
   issn={0273-0979},
}

\bib{AkSh92}{book}{
   author={Akivis, M. A.},
   author={Shelekhov, A. M.},
   title={Geometry and algebra of multidimensional three-webs},
   series={Mathematics and its Applications (Soviet Series)},
   volume={82},
   note={With an appendix by E. V. Ferapontov;
   Translated from the Russian by Vladislav V. Goldberg},
   publisher={Kluwer Academic Publishers Group, Dordrecht},
   date={1992},
   pages={xviii+358},
   isbn={0-7923-1684-3},
}

\bib{BP56}{article}{
   author={Bruck, R. H.},
   author={Paige, L. J.},
   title={Loops whose inner mappings are automorphisms},
   journal={Ann. of Math. (2)},
   volume={63},
   date={1956},
   pages={308--323},
   issn={0003-486X},
}

\bib{Ch90}{collection}{
   title={Quasigroups and loops: theory and applications},
   series={Sigma Series in Pure Mathematics},
   volume={8},
   editor={Chein, O.},
   editor={Pflugfelder, H. O.},
   editor={Smith, J. D. H.},
   publisher={Heldermann Verlag, Berlin},
   date={1990},
   pages={xii+568},
   isbn={3-88538-008-0},
}

\bib{Fi00}{article}{
   author={Figula, {A}.},
   title={Geodesic loops},
   journal={J. Lie Theory},
   volume={10},
   date={2000},
   number={2},
   pages={455--461},
   issn={0949-5932},
}

\bib{Go88}{book}{
   author={Goldberg, V. V.},
   title={Theory of multicodimensional $(n + 1)$-webs},
   series={Mathematics and its Applications},
   volume={44},
   publisher={Kluwer Academic Publishers Group, Dordrecht},
   date={1988},
   pages={xxii+466},
   isbn={90-277-2756-2},
}

\bib{GKN14}{article}{
    Author = {A. {Grishkov}},
    author={M. {Kinyon}},
    author={G. P. {Nagy}},
    Title = {{Solvability of commutative automorphic loops.}},
    Journal = {{Proc. Am. Math. Soc.}},
    ISSN = {0002-9939; 1088-6826/e},
    Volume = {142},
    Number = {9},
    Pages = {3029--3037},
    Year = {2014},
    Publisher = {American Mathematical Society (AMS), Providence, RI},
}

\bib{HoSt86}{article}{
   author={Hofmann, K. H.},
   author={Strambach, K.},
   title={Lie's fundamental theorems for local analytical loops},
   journal={Pacific J. Math.},
   volume={123},
   date={1986},
   number={2},
   pages={301--327},
   issn={0030-8730},
}

\bib{JKV10}{article}{
    Author = {P. {Jedli\v{c}ka}},
    author={M. {Kinyon}},
    author ={P. {Vojt\v{e}chovsk\'y}},
    Title = {{Constructions of commutative automorphic loops.}},
    Journal = {{Commun. Algebra}},
    ISSN = {0092-7872; 1532-4125/e},
    Volume = {38},
    Number = {9},
    Pages = {3243--3267},
    Year = {2010},
    Publisher = {Taylor \& Francis, Philadelphia, PA},
}

\bib{JKV11}{article}{
   author={Jedli{\v{c}}ka, P.},
   author={Kinyon, M.},
   author={Vojt{\v{e}}chovsk{\'y}, P.},
   title={The structure of commutative automorphic loops},
   journal={Trans. Amer. Math. Soc.},
   volume={363},
   date={2011},
   number={1},
   pages={365--384},
   issn={0002-9947},
}

\bib{JKNV11}{article}{
   author={Johnson, K. W.},
   author={Kinyon, M. K.},
   author={Nagy, G. P.},
   author={Vojt{\v{e}}chovsk{\'y}, P.},
   title={Searching for small simple automorphic loops},
   journal={LMS J. Comput. Math.},
   volume={14},
   date={2011},
   pages={200--213},
   issn={1461-1570},
}

\bib{Ki75}{article}{
   author={Kikkawa, M.},
   title={Geometry of homogeneous Lie loops},
   journal={Hiroshima Math. J.},
   volume={5},
   date={1975},
   number={2},
   pages={141--179},
   issn={0018-2079},
}

\bib{KKPV12}{article}{
   author={Kinyon, Michael K.},
   author={Kunen, Kenneth},
   author={Phillips, J. D.},
   author={Vojt{\v{e}}chovsk{\'y}, Petr},
   title={The structure of automorphic loops},
   journal={Trans. Amer. Math. Soc.},
   volume={368},
   date={2016},
   number={12},
   pages={8901--8927},
   issn={0002-9947},
   review={\MR{3551593}},
}

\bib{Kr98}{article}{
   author={Kreuzer, A.},
   title={Inner mappings of Bruck loops},
   journal={Math. Proc. Cambridge Philos. Soc.},
   volume={123},
   date={1998},
   number={1},
   pages={53--57},
   issn={0305-0041},
}

\bib{Ku70}{article}{
   author={Kuzmine, E. N.},
   title={La relation entre les alg\`ebres de Malcev et les boucles de
   Moufang analytiques},
   language={French},
   journal={C. R. Acad. Sci. Paris S\'er. A-B},
   volume={271},
   date={1970},
   pages={A1152--A1155},
}

\bib{Ku71}{article}{
   author={Kuz{\cprime}min, E. N.},
   title={The connection between Mal\cprime cev algebras and analytic
   Moufang loops},
   language={Russian},
   journal={Algebra i Logika},
   volume={10},
   date={1971},
   pages={3--22},
   issn={0373-9252},
}

\bib{Li52}{article}{
    Author = {W. G. {Lister}},
    Title = {{A structure theory of Lie triple systems.}},
    Journal = {{Trans. Am. Math. Soc.}},
    ISSN = {0002-9947; 1088-6850/e},
    Volume = {72},
    Pages = {217--242},
    Year = {1952},
    Publisher = {American Mathematical Society (AMS), Providence, RI},
}

\bib{Ma55}{article}{
   author={Mal{\cprime}cev, A. I.},
   title={Analytic loops},
   language={Russian},
   journal={Mat. Sb. N.S.},
   volume={36(78)},
   date={1955},
   pages={569--576},
}

\bib{MP10}{article}{
year={2010},
issn={1083-4362},
journal={Transformation Groups},
volume={15},
number={3},
title={Formal multiplications, bialgebras of distributions and nonassociative Lie theory},
publisher={SP Birkhäuser Verlag Boston},
author={Mostovoy, J.},
author={P{\'e}rez-Izquierdo, J. M.},
pages={625-653},
}

\bib{MP10a}{article}{
    Author = {J. {Mostovoy}},
    author = {J. M. {P\'erez-Izquierdo}},
    Title = {{Ideals in non-associative universal enveloping algebras of Lie triple systems.}},
    Journal = {{Forum Math.}},
    ISSN = {0933-7741; 1435-5337/e},
    Volume = {22},
    Number = {1},
    Pages = {1--20},
    Year = {2010},
    Publisher = {De Gruyter, Berlin},
}

\bib{MPS14}{article}{
year={2014},
issn={1664-3607},
journal={Bulletin of Mathematical Sciences},
volume={4},
number={1},
title={Hopf algebras in non-associative Lie theory},
publisher={Springer Basel},
author={Mostovoy, J.},
author={P{\'e}rez-Izquierdo, J. M.},
author={Shestakov, I. P.},
pages={129-173},
}

\bib{MPS16}{article}{
 author={Mostovoy, J.},
 author={P{\'e}rez-Izquierdo, J. M.},
 author={Shestakov, I. P.},
   title={A non-asociative Baker-Campbell-Hausdorff formula},
   eprint={arXiv:1605.00953v1},
   journal={To appear in Proc. Amer. Math. Soc.},
   volume={},
   date={2016},
   pages={},
   issn={},
}

\bib{Na02}{article}{
   author={Nagy, G. P.},
   title={The Campbell-Hausdorff series of local analytic Bruck loops},
   journal={Abh. Math. Sem. Univ. Hamburg},
   volume={72},
   date={2002},
   pages={79--87},
   issn={0025-5858},
}

\bib{Na14}{article}{
   author={Nagy, G. P.},
   title={On centerless commutative automorphic loops},
   journal={Comment. Math. Univ. Carolin.},
   volume={55},
   date={2014},
   number={4},
   pages={485--491},
}

\bib{NaSt02}{book}{
   author={Nagy, P. T.},
   author={Strambach, K.},
   title={Loops in group theory and Lie theory},
   series={de Gruyter Expositions in Mathematics},
   volume={35},
   publisher={Walter de Gruyter \& Co., Berlin},
   date={2002},
   pages={xii+361},
   isbn={3-11-017010-8},
}

\bib{PI05}{article}{
    Author = {J. M. {P\'erez-Izquierdo}},
    Title = {{An envelope for Bol algebras}},
    Journal = {{J. Algebra}},
    ISSN = {0021-8693},
    Volume = {284},
    Number = {2},
    Pages = {480--493},
    Year = {2005},
    Publisher = {Elsevier (Academic Press), San Diego, CA},
}

\bib{PI07}{article}{
   author={P{\'e}rez-Izquierdo, J. M.},
   title={Algebras, hyperalgebras, nonassociative bialgebras and loops},
   journal={Adv. Math.},
   volume={208},
   date={2007},
   number={2},
   pages={834--876},
   issn={0001-8708},
}

\bib{Sa90}{article}{
   author={Sabinin, L. V.},
   title={On the infinitesimal theory of smooth hyporeductive loops},
   language={Russian, with English summary},
   conference={
      title={Webs and quasigroups (Russian)},
   },
   book={
      publisher={Kalinin. Gos. Univ., Kalinin},
   },
   date={1990},
   pages={33--39},
}

\bib{Sa90a}{article}{
   author={Sabinin, L. V.},
   title={Smooth hypo-reductive loops},
   language={Russian},
   conference={
      title={Variational methods in modern geometry (Russian)},
   },
   book={
      publisher={Univ. Druzhby Narodov, Moscow},
   },
   date={1990},
   pages={50--69},
}

\bib{Sa91}{article}{
   author={Sabinin, L. V.},
   title={Smooth hyporeductive loops},
   language={Russian},
   journal={Dokl. Akad. Nauk SSSR},
   volume={314},
   date={1990},
   number={3},
   pages={565--568},
   issn={0002-3264},
   translation={
      journal={Soviet Math. Dokl.},
      volume={42},
      date={1991},
      number={2},
      pages={524--526},
      issn={0197-6788},
   },
}

\bib{Sa99}{book}{
   author={Sabinin, L. V.},
   title={Smooth quasigroups and loops},
   series={Mathematics and its Applications},
   volume={492},
   publisher={Kluwer Academic Publishers, Dordrecht},
   date={1999},
   pages={xvi+249},
   isbn={0-7923-5920-8},
}

\bib{MiSa82}{article}{
   author={Sabinin, L. V.},
   author={Mikheev, P. O.},
   title={Analytic Bol loops},
   language={Russian},
   conference={
      title={Webs and quasigroups},
   },
   book={
      publisher={Kalinin. Gos. Univ., Kalinin},
   },
   date={1982},
   pages={102--109, 153},
}

\bib{MiSa85}{article}{
   author={Sabinin, L. V.},
   author={Mikheev, P. O.},
   title={The differential geometry of Bol loops},
   language={Russian},
   journal={Dokl. Akad. Nauk SSSR},
   volume={281},
   date={1985},
   number={5},
   pages={1055--1057},
   issn={0002-3264},
}

\bib{MiSa87}{article}{
   author={Sabinin, L. V.},
   author={Mikheev, P. O.},
   title={Infinitesimal theory of local analytic loops},
   language={Russian},
   journal={Dokl. Akad. Nauk SSSR},
   volume={297},
   date={1987},
   number={4},
   pages={801--804},
   issn={0002-3264},
   translation={
      journal={Soviet Math. Dokl.},
      volume={36},
      date={1988},
      number={3},
      pages={545--548},
      issn={0197-6788},
   },
}

\bib{ShUm02}{article}{
   author={Shestakov, I. P.},
   author={Umirbaev, U. U.},
   title={Free Akivis algebras, primitive elements, and hyperalgebras},
   journal={J. Algebra},
   volume={250},
   date={2002},
   number={2},
   pages={533--548},
   issn={0021-8693},
}

\bib{Sm88}{article}{
   author={Smith, J. D. H.},
   title={Multilinear algebras and Lie's theorem for formal $n$-loops},
   journal={Arch. Math. (Basel)},
   volume={51},
   date={1988},
   number={2},
   pages={169--177},
   issn={0003-889X},
}

\bib{Sw69}{book}{
   author={Sweedler, M. E.},
   title={Hopf algebras},
   series={Mathematics Lecture Note Series},
   publisher={W. A. Benjamin, Inc., New York},
   date={1969},
   pages={vii+336},
}

\bib{Vo15}{article}{
   author={Vojt{\v{e}}chovsk{\'y}, P.},
   title={Three lectures on automorphic loops},
   journal={Quasigroups Related Systems},
   volume={23},
   date={2015},
   number={1},
   pages={129--163},
   issn={1561-2848},
}

\bib{Ya57}{article}{
   author={Yamaguti, K.},
   title={On algebras of totally geodesic spaces (Lie triple systems)},
   journal={J. Sci. Hiroshima Univ. Ser. A},
   volume={21},
   date={1957/1958},
   pages={107--113},
}
\end{biblist}
\end{bibdiv}
\end{document}